\def\RR{\rm \hbox{I\kern-.2em\hbox{R}}}
\def\NN{\rm \hbox{I\kern-.2em\hbox{N}}}
\def\ZZ{\rm {{\rm Z}\kern-.28em{\rm Z}}}
\def\CC{\rm \hbox{C\kern -.5em {\raise .32ex \hbox{$\scriptscriptstyle
|$}}\kern
-.22em{\raise .6ex \hbox{$\scriptscriptstyle |$}}\kern .4em}}
\def\<{\langle}
\def\>{\rangle}
\def\e{\varepsilon}
\def\cF{{\cal F}}
\newcommand{\R}{{\mathbb R}}
\def\Chi{\raise .3ex
\hbox{\large $\chi$}} 
\def\lsima{\hbox{\kern -.6em\raisebox{-1ex}{$~\stackrel{\textstyle<}{\sim}~$}}\kern -.4em}
\def\lsim{\hbox{\kern -.2em\raisebox{-1ex}{$~\stackrel{\textstyle<}{\sim}~$}}\kern -.2em}
\def\[{\Bigl [}
\def\]{\Bigr ]}
\def\({\Bigl (}
\def\){\Bigr )}
\def\[{\Bigl [}
\def\]{\Bigr ]}
\def\({\Bigl (}
\def\){\Bigr )}
\newcommand{\be}{\begin{equation}}
\newcommand{\ee}{\end{equation}}
\newcommand{\beu}{\begin{equation*}}
\newcommand{\eeu}{\end{equation*}}
\newcommand{\bea}{$$ \begin{array}{lll}}
\newcommand{\eea}{\end{array} $$}
\newcommand{\bi}{\begin{itemize}}
\newcommand{\ei}{\end{itemize}}
\newtheorem{theorem}{Theorem}
\newtheorem{remark}{Remark}
\newtheorem{lemma}{Lemma}
\newtheorem{corollary}{Corollary}
\DeclareMathOperator*{\argmin}{argmin}
\newcommand{\newmes}{\chi}
\newcommand{\errmes}{\rho}
\newcommand{\mymes}{\gamma}
\newcommand{\auxmes}{\mu}
\newcommand{\Gaus}{U}
\newcommand{\binrv}{B}
\newcommand{\Bin}{\textrm{Bin}}
\newcommand{\sample}{x}
\newcommand{\spazio}{n}
\newcommand{\iterazmax }{ t }
\newcommand{\iteraz}{k}
\newcommand{\indbasis}{i}
\newcommand{\indsample}{j}
\newcommand{\punti}{m}
\newcommand{\param}{r}
\newcommand{\paramzeta}{s}
\newcommand{\conf}{\alpha}
\newcommand{\basis}{\psi}
\newcommand{\intero}{\tau}
\newcommand{\costprop}{\theta}
\newcommand{\thresholdcond}{\xi}
\newcommand{\paradorfler}{\beta}
\newcommand{\setx}{X}
\newcommand{\bor}{\mathfrak{B}}
\begin{document}
\bibliographystyle{plain}
\title
{
Adaptive approximation by optimal weighted least squares methods
}
\author{
Giovanni Migliorati\thanks{
Sorbonne Universit\'e, UPMC Univ Paris 06, CNRS, UMR 7598, Laboratoire Jacques-Louis Lions, 4, place Jussieu 75005, Paris, France. 
 email: migliorati@ljll.math.upmc.fr
} }


\date{\today}

\maketitle

\begin{abstract}
\noindent 
Given any domain $\setx\subseteq \R^d$ and a probability measure $\errmes$ on $\setx$, we study the problem of approximating in $L^2(\setx,\errmes)$ a given function $u:\setx\to\R$, using its noiseless pointwise evaluations at random samples. For any given linear space $V\subset L^2(\setx,\errmes)$ with dimension $n$, previous works have shown that stable and optimally converging Weighted Least-Squares (WLS) estimators can be constructed using $m$ random samples distributed according to an auxiliary probability measure $\auxmes$ that depends on $V$, with $m$ being linearly proportional to $n$ up to a logarithmic term. As a first contribution, we present novel results on the stability and accuracy of WLS estimators with a given approximation space, using random samples that are more structured than those used in the previous analysis. As a second contribution, we study approximation by WLS estimators in the adaptive setting. For any sequence of nested spaces $(V_\iteraz)_{\iteraz} \subset L^2(\setx,\errmes)$, we show that a sequence of WLS estimators of $u$, one for each space $V_\iteraz$, can be sequentially constructed such that: i) the estimators remain provably stable with high probability and optimally converging in expectation, simultaneously for all iterations from one to $\iteraz$, and ii) the overall number of samples necessary to construct all the first $\iteraz$ estimators remains linearly proportional to the dimension of $V_\iteraz$, up to a logarithmic term. The overall number of samples takes into account all the samples generated to build all the estimators from iteration one to $\iteraz$. We propose two sampling algorithms that achieve this goal. The first one is a purely random algorithm that recycles most of the samples from the previous iterations. The second algorithm recycles all the samples from all the previous iterations. Such an achievement is made possible by crucially exploiting the structure of the random samples. Finally we apply the results from our analysis to develop numerical methods for the adaptive approximation of functions in high dimension. 
\end{abstract}

\paragraph{Keywords:} approximation theory, weighted least squares, convergence rates, high dimensional approximation, adaptive approximation.   
\paragraph{AMS 2010 Classification:} 41A65, 41A25, 41A10, 65T60.

\section{Introduction}
\noindent
In recent years, the increasing computing power and availability of data have contributed to a huge growth in the complexity of the mathematical models.  
Dealing with such models often requires the approximation or integration of functions in high dimension, that can be a challenging task due to the curse of dimensionality.  
The present paper studies the problem of approximating a function $u:\setx\to\mathbb{R}$ that depends on a $d$-dimensional parameter $\sample \in \setx \subseteq \mathbb{R}^d$, 
using the information coming from the evaluations of $u$ at a set of selected samples $\sample^1,\ldots,\sample^\punti\in \setx$.  
Two classical  approaches to such a problem are \emph{interpolation} and \emph{least-squares methods}, see \emph{e.g.}~\cite{D,BG2004a,GKKW}.  
Here we turn our attention to least-squares methods, that are frequently used in applications for approximation, data-fitting, estimation and prediction. 
Other approaches to function approximation are \emph{compressive sensing}, see \cite{FR} and references therein, and \emph{neural networks}, see \emph{e.g.}~\cite{C,L}.

Previous convergence results for standard least-squares methods have been proposed in \cite{CDL}, in expectation, and \cite{MNST2014}, in probability. 
Weighted Least-Squares methods (hereafter WLS) have been previously studied in \cite{DH,JNZ,CM2016}. 
It has been proven in \cite{CM2016} that stable and optimally converging WLS estimators can be constructed using judiciously distributed random samples, whose number is only linearly proportional to the dimension of the approximation space, up to a logarithmic term. 
The analysis holds in general approximation spaces, and the number of samples ensuring stability and optimality of the estimator does not depend on $d$.   
Such a result is recalled in Theorem~\ref{theo2}. 
The analysis in \cite{CM2016} considers both cases of noisy or noiseless evaluations of $u$.
In this paper we confine to the case of noiseless evaluations, which is relevant whenever the function $u$ can be evaluated at the selected samples with sufficiently high precision, \emph{e.g.}~up to machine epsilon. 
The case of noisy evaluations can be addressed using the same techniques as in \cite{CM2016,MNT2015}.    

The proof of Theorem~\ref{theo2}, and more generally the analysis in \cite{CM2016}, use results from \cite{AW,Tropp} on tail bounds for sums of random matrices. An interesting 
feature of the bounds in \cite{Tropp}  
is that the matrices need not be identically distributed. 
The analysis in \cite{CM2016} does not take advantage of this property.  
One of the main goals of the present paper is to show how the use of this property paves the way towards novel results in the analysis of WLS methods for a given space (Theorem~\ref{theo_mio}), and towards their application in an adaptive setting (Theorem~\ref{thm:teo_union_bounds_mio}). 
The proof of Theorem~\ref{theo_mio} builds on previous contributions \cite{CDL,CM2016}. 
The overall skeleton of the proof is similar,  but with some crucial differences that make use of the additional structure of the random samples. 

The outline of the paper is the following: in Section~\ref{sec:intro_main_novelty} we describe and motivate our contributions. In Section~\ref{sec:opt_wei_ls} we recall some results from the analysis in \cite{CM2016} on weighted least squares for a given space. Section~\ref{sec:opt_wei_ls_mio} contains Theorem~\ref{theo_mio} and its proof. In Section~\ref{sec:adaptive} we apply Theorem~\ref{theo_mio} and Theorem~\ref{theo2} to the adaptive setting, with an arbitrary nested sequence $(V_\iteraz)_\iteraz$ of approximation spaces. 
In Section 4 we present the sampling algorithms. Section 5 contains some numerical tests, together with an example of adaptive algorithm that uses sequences of nested polynomial spaces. 
Section 6 draws some conclusions. 
All the algorithms are collected in appendix.

\subsection{Motivations and outline of the main results}
\label{sec:intro_main_novelty}
\noindent
Let $\setx\subseteq \R^d$ be a Borel set, $\errmes$ be a Borel probability measure on $\setx$, $(\basis_\indbasis)_{\indbasis \geq 1}$ be a basis orthonormal in $L^2(\setx,\errmes)$ equipped with the inner product $\langle f_1, f_2  \rangle = \int_\setx f_1(\sample)\,  f_2(\sample) \, d\errmes$, and $V:=\textrm{span}\{ \basis_1,\ldots,\basis_\spazio \}$ be the space obtained by retaining $\spazio$ terms of the basis.
The least-squares method approximates the function $u$ by computing its discrete $L^2$ projection onto a given space $V$, using pointwise evaluations of $u$ at a set of $\punti\geq \spazio$ distinct random samples $\sample^1,\ldots,\sample^\punti$.  
The analysis in \cite{CM2016}, whose main findings are resumed in the forthcoming Theorem~\ref{theo2}, provides some results on the stability and convergence properties of such a discrete projection, and of other WLS estimators as well.  
In Theorem~\ref{theo2}, independent and identically distributed random samples are drawn from the probability measure
$$
d\auxmes_\spazio = \dfrac{1}{\spazio} \sum_{j=1}^\spazio d\newmes_j,  
$$
that is an additive mixture of the probability measures $\newmes_j$ defined as 
\begin{equation}
\newmes_j(A):=\int_A |\basis_j(\sample)|^2 \, d\errmes, \qquad \textrm{for any Borel set $A\subseteq \setx$.}
\label{eq:def_mix_mes}
\end{equation}
One sample from $\auxmes_\spazio$ can be generated by randomly choosing an index $j$ uniformly in $\{1,\ldots,\spazio\}$ and then drawing one sample from $\newmes_j$. 
In general $\auxmes_\spazio$ is not a product measure, even if $\errmes$ is a product measure. 

Another novel approach proposed in this paper uses a different type of random samples. Such an approach uses a set of independent random samples of the form $\sample^1, \ldots, \sample^{\punti}$ with $\punti=\intero \spazio$ for a suitable integer $\intero$, and 
such that for any $j=1,\ldots,\spazio$, the samples $\sample^{(j-1)\intero +1}, \ldots, \sample^{j \intero}$ are distributed according to $\newmes_j$. 
These samples are not identically distributed.
On the upside, they are more structured than those used in Theorem~\ref{theo2}, since the amount of samples coming from each component of the mixture is fixed.  
If $\intero=1$ then the $\spazio$ independent samples $\sample^1,\ldots,\sample^\spazio$ are jointly drawn from 
$$
(\sample^1,\ldots,\sample^\spazio) \sim d\mymes_\spazio := 
\prod_{j=1}^\spazio | \basis_j(\sample^j) |^2 \, d\errmes. 
$$
If $\intero \geq 1$ the draw of $\punti=\intero \spazio$ independent samples $\sample^1,\ldots,\sample^\punti$ follows the probability measure $d\mymes^{\punti} := \otimes^\intero d\mymes_\spazio$.

Denote with $d\auxmes^\punti := \otimes^\punti d\auxmes_\spazio$ the probability measure for the draw of $\punti$ i.i.d. samples from $\auxmes_\spazio$. 
Given a fixed $\spazio$, in the limit $\punti=\intero \spazio\to\infty$ obtained by $\intero \to \infty$, the proportion of random samples of $\auxmes^\punti$ coming from each $\newmes_j$ 
tends to $1/\spazio$ by the strong law of large numbers, whereas the same proportion is exactly equal to $1/\spazio$ by construction for the samples drawn from $\mymes^\punti$.  
With any $\punti$ the two probability measures $\auxmes^\punti$ and $\mymes^\punti$ generate samples with different distributions. 
However, the block of $\spazio$ samples from $\mymes_\spazio$ still mimics 
the samples from $\auxmes_\spazio$. 
For example the sum of the expectation of the random samples is preserved,  
$$
(\sample^1,\ldots,\sample^\spazio) \sim d\mymes_\spazio,
\quad
\tilde \sample \sim d\auxmes_\spazio 
\implies 
 \sum_{j=1}^\spazio \mathbb{E}\left(\sample^j \right)  
=
\sum_{j=1}^\spazio \int_\setx  \sample^j  |\basis_j( \sample^j)|^2 \, d\errmes  
=
\int_\setx \tilde \sample \sum_{j=1}^\spazio |\basis_j(\tilde \sample)|^2 \, d\errmes  
=
\spazio  \mathbb{E}(\tilde \sample),  
$$
and this preservation plays a main role in our forthcoming analysis. 
All the measures appearing in this paper are also Borel probability measures, and sometimes for brevity we refer to them just as measures.

The first main result of this paper is Theorem~\ref{theo_mio}. 
It proves the same guarantees as  Theorem~\ref{theo2} 
for the stability and accuracy of WLS estimators 
with a given approximation space, but when 
the random samples from $\auxmes^\punti$ 
are replaced with random samples from $\mymes^\punti$.
The second main result concerns the analysis of WLS estimators, when considering a sequence of nested approximation spaces $(V_\iteraz)_{\iteraz\geq 1}$, 
where $V_\iteraz:=\textrm{span}\{\basis_1,\ldots,\basis_{\spazio_\iteraz}\}$ and 
$\spazio_\iteraz:=\textrm{dim}(V_\iteraz)$. 
In this adaptive setting, we compare the two approaches using random samples from  $\mymes^\punti$ or $\auxmes^\punti$.   
In both cases, in Theorems~\ref{thm:teo_union_bounds_mio} and \ref{thm:teo_union_bounds}, we prove that a sequence of estimators of $u$, one for each space $V_\iteraz$, can be sequentially constructed 
such that: i) the estimators remain provably stable with high probability and optimally converging in expectation, simultaneously for all iterations from one to $\iteraz$, and ii) the overall number of samples necessary to construct all the first $\iteraz$ estimators remains linearly proportional to the dimension of $V_\iteraz$, up to a logarithmic term. As a further contribution we show that using the samples from $\mymes^\punti$ rather than from $\auxmes^\punti$ provides the following advantages, that are relevant in the development of adaptive WLS methods. 
\begin{itemize}

\item {\bf Structure of the random samples.} When using $\mymes^\punti$, the number of random samples coming from each component $|\basis_j(\sample)|^2\, d\errmes$ of the mixture is precisely determined, and allows the development of adaptive algorithms that recycle all the samples from all the previous iterations. When using $\auxmes^\punti$ it is not possible to recycle all the samples from the previous iterations with probability equal to one. Given two nested spaces $V_{\iteraz-1}\subset V_{\iteraz}$ and two positive integers $\intero_{\iteraz-1}\leq \intero_{\iteraz}$,   
at iteration $\iteraz$ the probability measure $\mymes^{\punti_\iteraz}$ of the $\punti_{\iteraz}= \intero_\iteraz \spazio_\iteraz$ samples can be decomposed as 
\begin{align}
\label{eq:decomposition_mes_chi}
d\mymes^{\punti_\iteraz}
= &
\otimes^{\intero_\iteraz}
\prod_{j=1}^{\spazio_{\iteraz }} 
 |\basis_j(\sample)|^2 \, d\errmes 
\\
= &
\left( 
\underbrace{
d\mymes^{\punti_{\iteraz-1}}
}_{
\substack{
\textrm{measure of the $\punti_{\iteraz-1}$ }\\
\textrm{samples recycled with }\\   
\textrm{ certainty from step } \iteraz-1
}
}
\right)
\otimes
\left( 
\underbrace{
\otimes^{\intero_\iteraz-\intero_{\iteraz-1}}
\prod_{j=1}^{\spazio_{\iteraz-1}} 
 |\basis_j(\sample)|^2 \, d\errmes 
}_{
\textrm{measure of the new samples drawn}   
\atop
\textrm{from the old components of the mixture}
}
\right)
\otimes
\left(
\underbrace{
 \otimes^{\intero_\iteraz} 
\prod_{j=1+\spazio_{\iteraz-1}}^{\spazio_{\iteraz }}
|\basis_{j }(\sample)|^2 \, d\errmes  
}_{\textrm{measure of the new samples drawn}
\atop  
\textrm{from the new components of the mixture}
}
\right). 
\nonumber
\end{align}
The probability measure $d\auxmes^{\punti_\iteraz} =\otimes^{\punti_\iteraz} d\auxmes_{\spazio_\iteraz}$ cannot be decomposed as the product of two probability measures with one being $\auxmes^{\punti_{\iteraz-1}}$,  
because $\auxmes_{\spazio_\iteraz}$ is not a product measure. It is however possible to leverage the structure of $\auxmes_{\spazio_\iteraz}$ as an additive mixture of $\auxmes_{\spazio_{\iteraz-1}}$ and a suitable probability measure $\sigma_{\spazio_{\iteraz}}$ , and decompose $\auxmes^{\punti_\iteraz}$ as 
\begin{align}
d\auxmes^{\punti_\iteraz}
 = & 
\otimes^{\punti_\iteraz} \left( 
\dfrac{\spazio_{ \iteraz -1 }}{\spazio_{\iteraz}}
\underbrace{
\overbrace{
\dfrac{1}{\spazio_{\iteraz-1}}  
\sum_{j=1}^{\spazio_{\iteraz-1}} | \basis_j(\sample)|^2 \, d\errmes   
}^{d\auxmes_{\spazio_{\iteraz-1}}}
}_{
\substack{
\textrm{measure of the samples drawn} \\ 
\textrm{from the old components of $d\auxmes_{\spazio_{\iteraz}}$,} \\
\textrm{perhaps recycled from step $\iteraz-1$}
}
}
+
\dfrac{\spazio_{\iteraz} - \spazio_{\iteraz-1}}{\spazio_{\iteraz}}
\underbrace{
\overbrace{
\dfrac{1}{\spazio_{\iteraz} - \spazio_{\iteraz-1}} 
\sum_{j=1+\spazio_{\iteraz-1}}^{\spazio_\iteraz} | \basis_j(\sample)|^2 \, d\errmes    
}^{d\sigma_{\spazio_{\iteraz}}}
}_{
\substack{
\textrm{measure of the samples drawn}\\
\textrm{from the new components of $d\auxmes_{\spazio_{\iteraz}}$}\\
}
}
\right). 
\label{eq:decomposition_mixture}
\end{align}
When drawing $\punti_\iteraz$ samples from $\auxmes_{\spazio_{\iteraz}}$, the amount of samples coming from one of the components of $\auxmes_{\spazio_{\iteraz-1}}$ is a binomial random variable with number of trials $\punti_\iteraz$ and rate of success $\spazio_{\iteraz-1}/\spazio_{\iteraz}$ for each trial. 
Whenever this random variable takes values less than $\punti_{\iteraz-1}$, that always occurs with some positive probability, it is not possible to recycle all the $\punti_{\iteraz-1}$ samples from iteration $\iteraz-1$. 
 
\item {\bf Variance reduction.} 
Random mixture proportions induce extra variance in the generated samples. 
As a consequence,  random samples from $\mymes^\punti$ are more disciplined than random samples from $\auxmes^\punti$. 
This stabilization effect amplifies when using basis elements whose supports are more localized than globally supported orthogonal polynomials.
More on this in Remark~\ref{thm:stabil_localiz}. 

\item {\bf Coarsening and extension to nonnested sequences of approximation spaces.} 
When using the samples from $\mymes^\punti$, thanks to the decomposition \eqref{eq:decomposition_mes_chi}, it is possible to remove an element of the basis $\basis_j$ from the space $V$ as well as its associated samples  $\sample^{(j-1)\intero +1}, \ldots, \sample^{j \intero}$ from the whole set $\sample^1, \ldots, \sample^{\punti}$ of $\punti=\intero \spazio$ samples, 
and at the same time recycle all the $\intero (\spazio-1)$ remaining samples for $V \setminus \{ \basis_j \}$. 
More generally, the use of $\mymes^\punti$ allows the development of efficient adaptive methods with arbitrary sequences of approximation spaces $(V_\iteraz)_\iteraz$, that probe any $\basis_j\notin V_\iteraz$ 
chosen according to some criterion. The method then either retains $\basis_j$ as $V_{\iteraz+1}=V_\iteraz \cup \{ \basis_j\}$ or discards it 
depending on its contribution to the reduction of (an estimator of) the error from $V_\iteraz$ to $V_{\iteraz+1}$.

\end{itemize}

\paragraph{Comparison with \cite{BC2018}.}
Section \ref{sec:seq_random_sampling} presents an analysis of a sampling algorithm (Algorithm~\ref{algo:sampling_altro}) that sequentially generates $\punti$ random samples from $\auxmes^\punti$ 
with an arbitrary nested sequence of approximation spaces $(V_\iteraz)_\iteraz$.
In \cite{BC2018} a similar algorithm that uses $\auxmes^\punti$ has been proposed and analysed. 

\newcommand{\mesnot}{\errmes}
\paragraph{Notation for product of measures.}
Let $\mesnot_1,\mesnot_2$ be two Borel measures on $\setx\subseteq \mathbb{R}^d$ with the Borel $\sigma$-algebra 
$\bor=\bor(\setx)$.  
The notation $\mesnot_1 \otimes \mesnot_2$ denotes the product measure on $\setx\times \setx$ with the tensor product Borel $\sigma$-algebra $\bor\otimes\bor$,  that satisfies
$$
\mesnot_1 \otimes \mesnot_2 (A_1\times A_2) = \mesnot_1(A_1) \mesnot_2(A_2), \quad \textrm{ for any } A_1, A_2 \in \bor.
$$

\section{Optimal weighted least squares for a given approximation space}
\label{sec:opt_wei_ls}
\subsection{Previous results}
\label{sec:opt_wei_ls_prev}
\noindent
Let $\setx\subseteq \R^d$ be a Borel set, and $\errmes$ be a Borel probability measure on $\setx$.  
We define the $L^2(\setx,\errmes)$ inner product 
\begin{equation}
\langle f_1, f_2  \rangle = \int_\setx  
f_1(\sample)\,  f_2(\sample) \, d\errmes(\sample)
\label{eq:L2innerprod}
\end{equation}
associated with the norm $\| f\|:= \langle f,f\rangle^{1/2}$. 
Throughout the paper we denote by $(\basis_\indbasis)_{\indbasis \geq 1}$ 
an $L^2(\setx,\errmes)$ orthonormal basis. 
We define the linear space $V:=\textrm{span}\{ \basis_1,\ldots,\basis_\spazio \}$ that contains $\spazio$ arbitrarily chosen elements of the basis,  
and denote with $\spazio:=\textrm{dim}(V)$ its dimension. 
We further assume that 
\begin{equation}
\textrm{ for any $\sample \in \setx$ there exists $\basis_j \in V$ such that $\basis_j(\sample)\neq 0$.}
\label{eq:assumption_christoff}
\end{equation}
This assumption is verified for example if the space $V$ contains the functions that are constant over $\setx$. 
For any given $V$, we define the weight function $w:\setx \to \R$ as 
\begin{equation}
w(\sample):=\dfrac{ \spazio }{ \sum_{\indbasis =1}^\spazio | \basis_\indbasis(\sample)  |^2  }, \quad \sample \in \setx, 
\label{eq:weight_function_def} 
\end{equation}
whose denominator 
does not vanish 
under assumption \eqref{eq:assumption_christoff}. 
The function $w$ is known as the Christoffel function, up to a renormalization, when $V$ is the space of algebraic polynomials with prescribed total degree. 
Using $w$ we define the probability measure 
\begin{equation}
d\auxmes_{\spazio}:= w^{-1} d\errmes=  \dfrac{ \sum_{\indbasis=1}^{ \spazio } | \basis_\indbasis(\sample)|^2  }{\spazio}  d\errmes,  
\label{eq:changemes}
\end{equation}
which depends on the chosen approximation space $V$. 
Another inner product used in this paper is 
\begin{equation}
\langle f_1,f_2  \rangle_\punti := \dfrac{1}{\punti} \sum_{\indsample=1}^\punti w(\sample^\indsample)  f_1(\sample^\indsample) f_2(\sample^\indsample), 
\label{eq:disc_inn_prod}
\end{equation}
where the functions $w$, $f_1$, $f_2$ are evaluated at $\punti$ samples $\sample^1,\ldots,\sample^\punti$ independent and identically distributed as $\auxmes_\spazio$.
This inner product is associated with the discrete seminorm $\| f\|_\punti:= \langle f,f \rangle_\punti^{1/2}$.  
The discrete inner product \eqref{eq:disc_inn_prod} mimics \eqref{eq:L2innerprod} due to \eqref{eq:changemes}.  
The exact $L^2$ projection on $V$ of any function $u \in L^2(\setx,\errmes)$ is defined as 
$$
\Pi_\spazio u := \argmin_{v \in V }  \| u - v \|. 
$$

In practice such a projection cannot be computed out of very particular cases, motivating the interest towards the discrete least-squares approach. 
We define the weighted least-squares estimator $u_W$ of $u$ as  
$$
u_W := \Pi_\spazio^\punti u = \argmin_{v \in V }  \| u -v \|_\punti,    
$$
that is obtained by applying the discrete projector $\Pi_\spazio^\punti$ on $V$ to $u$. 
The estimator $u_W$ is associated to 
the 
solution of the linear system 
\begin{equation}
G a = h, 
\label{eq:normal_equations}
\end{equation}
where the Gramian matrix $G$ and the right-hand side $h$ are defined element-wise as 
$$
G_{ij}=\langle \basis_i , \basis_j   \rangle_\punti,
\quad h_i=\langle u, \basis_i \rangle_\punti, 
\quad 
i,j=1,\ldots,\spazio, 
$$
and $a=(a_1,\ldots,a_\spazio)^\top$ is the vector containing the coefficients of $u_W=\sum_{\indbasis=1}^\spazio a_\indbasis \basis_\indbasis$ expanded over the orthonormal basis. 
The linear system \eqref{eq:normal_equations} always has at least one solution, which is unique when $G$ is nonsingular. 
When $G$ is singular we can define $u_W$ as the estimator associated to the unique minimal $\ell_2$-norm solution to \eqref{eq:normal_equations}.
Moreover, we define the $L^2(\setx,\errmes)$ best approximation of $u$ in $V$ as 
\begin{equation}
e_{\spazio,2}(u):= \min_{v \in V} \| u- v  \| =  \| u- \Pi_\spazio u  \|, 
\label{eq:def_best_approx_l2}
\end{equation}
and the weighted $L^\infty(\setx,\errmes)$ best approximation of $u$ as 
$$
e_{\spazio,\infty,w}(u):= \inf_{v \in V}  \sup_{y \in \setx}  \sqrt{w(y)} | u(y) - v(y) |. 
$$
Notice that 
$\Pi_\spazio$, $\Pi_\spazio^\punti$, $e_{\spazio,2}$ and $e_{\spazio,\infty,w}$ 
depend on the chosen space $V$, and not only on its dimension $\spazio$. 
The identity matrix is denoted with $I \in \mathbb{R}^{\spazio\times\spazio}$. 
The spectral norm of any matrix $A\in \mathbb{R}^{\spazio\times\spazio}$ is defined as 
$$
\vvvert A \vvvert:=
\sup_{
\| v \|_{\ell_2} =1
} 
\| Av \|_{\ell_2}, 
$$ 
using the Euclidean inner product in $\mathbb{R}^\spazio$ and its associated norm. 
Another weighted least-squares estimator introduced in \cite{CM2016} is the conditioned estimator:  
\begin{equation}
\label{eq:def_cond_est}
u_C 
:= 
\begin{cases}
u_W, & \textrm{ if } \vvvert  G - I \vvvert \leq \frac12,   \\
0, & \textrm{ otherwise}. 
\end{cases} 
\end{equation}

One of the main results from \cite{CM2016} is the following theorem, 
see \cite[Theorem 2.1 and Corollary 2.2]{CM2016}.   
\begin{theorem}
\label{theo2}
For any real $\param>0$, if the integers $\punti$ and $\spazio$ are such that the condition 
\begin{equation}
\spazio\, \costprop_\param
\leq  \frac \punti {\ln \punti},\;\; \textrm{ with } \; \; 
\costprop_\param:=\costprop^{-1} (1+\param), 
\quad 
\costprop:=\frac{3\ln(3/2)-1}{2} \approx 0.108,
\label{condmw}
\end{equation}
is fulfilled, 
and $\sample^1,\ldots,\sample^\punti$ are independent and identically distributed random samples from $\auxmes_\spazio$, then the following holds:
\begin{enumerate}
\item[{\rm (i)}] the matrix $G$ satisfies the tail bound
\begin{equation*}
{\rm Pr} \, \left\{ \vvvert G- I\vvvert > \frac12 \right\} \leq
2\spazio \punti^{-(\param+1)} 
\leq 
2\punti^{-r}; 
\end{equation*}

\item[{\rm (ii)}] if $u\in L^2(\setx,\errmes)$ then
the estimator $u_C$ 
satisfies  
\begin{equation*}
\mathbb{E}(\|u-u_C\|^2)\leq (1+\e(\punti))e_{\spazio,2}(u)^2+2\|u\|^2 \punti^{-\param},
\end{equation*}
where $\e(\punti):= \frac {4} {\costprop_\param \, \ln (\punti)}\to 0$ as $\punti\to +\infty$, and $\costprop_\param$ as in {\rm \eqref{condmw}};
\item[{\rm (iii)}] with probability larger than $1-2\punti^{-\param}$, the 
estimator $u_W$ satisfies
\begin{equation}
\|u-u_W\|\leq (1+\sqrt 2) e_{\spazio,\infty,w}(u),
\label{nearoptimalprobw_2}
\end{equation}
for all $u$ such that $\| \sqrt{w} u \|_{L^\infty}<+\infty$. 
\end{enumerate}
\end{theorem}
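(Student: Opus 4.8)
The plan is to read statement (i) off a matrix Chernoff inequality, and then obtain (iii) and (ii) as consequences of the good event $\vvvert G-I\vvvert\le\tfrac12$ together with (i). First I would write the Gramian as a normalized sum of independent rank-one matrices. Setting $\bb(\sample):=(\basis_1(\sample),\ldots,\basis_\spazio(\sample))^\top$, the definition \eqref{eq:disc_inn_prod} gives
$$
G=\frac1\punti\sum_{\indsample=1}^\punti \bX_\indsample,\qquad \bX_\indsample:=w(\sample^\indsample)\,\bb(\sample^\indsample)\,\bb(\sample^\indsample)^\top,
$$
where the $\bX_\indsample$ are independent, positive semidefinite, and identically distributed. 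Using $d\auxmes_\spazio=w^{-1}d\errmes$ and the orthonormality $\int_\setx \basis_\indbasis\basis_k\,d\errmes=\delta_{\indbasis k}$, one computes $\mathbb{E}(\bX_\indsample)=\int_\setx \bb\,\bb^\top d\errmes=I$, hence $\mathbb{E}(G)=I$. The decisive point, and the very reason the weight $w$ is defined as in \eqref{eq:weight_function_def}, is that each summand is bounded \emph{almost surely} and \emph{uniformly}: since $\vvvert \bb(\sample)\bb(\sample)^\top\vvvert=\sum_{\indbasis=1}^\spazio|\basis_\indbasis(\sample)|^2$, the choice of $w$ forces $\vvvert \bX_\indsample\vvvert = w(\sample^\indsample)\sum_{\indbasis}|\basis_\indbasis(\sample^\indsample)|^2=\spazio$ for every realization.

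With $\mathbb{E}(G)=I$, whose smallest and largest eigenvalues both equal $1$, and the uniform bound $\vvvert \punti^{-1}\bX_\indsample\vvvert\le \spazio/\punti$, I would invoke the matrix Chernoff bound of \cite{Tropp} for sums of independent positive semidefinite matrices, applied with deviation $\delta=\tfrac12$ to both tails. The event $\{\vvvert G-I\vvvert>\tfrac12\}$ is the union $\{\lambda_{\min}(G)<\tfrac12\}\cup\{\lambda_{\max}(G)>\tfrac32\}$, and Chernoff bounds each probability by $\spazio$ times a contraction factor raised to the power $\punti/\spazio$ (the ratio of the eigenvalue $1$ to the uniform bound $\spazio/\punti$). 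The upper-tail factor has logarithm $\tfrac12-\tfrac32\ln\tfrac32=-\costprop$, which is exactly where the constant $\costprop$ in \eqref{condmw} originates; the lower-tail factor is smaller in magnitude, so both tails are controlled by $\spazio\,e^{-\costprop \punti/\spazio}$. The main bookkeeping step is then to convert \eqref{condmw} into the claimed algebraic decay: the condition $\spazio\,\costprop_\param\le \punti/\ln\punti$ is equivalent to $\costprop\,\punti/\spazio\ge(1+\param)\ln\punti$, whence $e^{-\costprop\punti/\spazio}\le\punti^{-(1+\param)}$, giving the bound $2\spazio\punti^{-(\param+1)}$, and the final inequality $\le 2\punti^{-\param}$ uses $\spazio\le\punti$, which \eqref{condmw} also guarantees.

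Statement (iii) I would obtain as a purely deterministic argument on the good event $\{\vvvert G-I\vvvert\le\tfrac12\}$, which by (i) has probability at least $1-2\punti^{-\param}$. On this event $\tfrac12\|v\|^2\le\|v\|_\punti^2\le\tfrac32\|v\|^2$ for all $v\in V$, since $\|v\|_\punti^2=c^\top Gc$ and $\|v\|^2=c^\top c$ for the coefficient vector $c$ of $v$. Taking $v^\ast\in V$ to (nearly) attain $e_{\spazio,\infty,w}(u)$, both $\|u-v^\ast\|$ and $\|u-v^\ast\|_\punti$ are bounded by $e_{\spazio,\infty,w}(u)$, because $\|u-v^\ast\|^2=\int_\setx w|u-v^\ast|^2\,d\auxmes_\spazio\le e_{\spazio,\infty,w}(u)^2$ while the discrete seminorm averages the same integrand $w|u-v^\ast|^2\le e_{\spazio,\infty,w}(u)^2$. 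Then $\|u-u_W\|\le\|u-v^\ast\|+\|v^\ast-u_W\|$, and combining the $\|\cdot\|_\punti$-orthogonality of $u-u_W$ to $V$ (so $\|v^\ast-u_W\|_\punti\le\|u-v^\ast\|_\punti$) with the norm equivalence ($\|v^\ast-u_W\|\le\sqrt2\,\|v^\ast-u_W\|_\punti$) yields $\|u-u_W\|\le(1+\sqrt2)\,e_{\spazio,\infty,w}(u)$.

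Statement (ii) is the most delicate and I would treat it last. Splitting on $\Omega^+:=\{\vvvert G-I\vvvert\le\tfrac12\}$ and its complement, and using $u_C=0$ off $\Omega^+$, gives $\mathbb{E}(\|u-u_C\|^2)=\mathbb{E}(\|u-u_W\|^2\mathbf{1}_{\Omega^+})+\|u\|^2\,{\rm Pr}((\Omega^+)^c)$, and the second term is at most $2\|u\|^2\punti^{-\param}$ by (i). For the first term, linearity of $\Pi_\spazio^\punti$ together with the fact that it fixes $V$ gives $u_W-\Pi_\spazio u=\Pi_\spazio^\punti g$ with $g:=u-\Pi_\spazio u\perp V$, and the $L^2$-Pythagoras identity splits $\|u-u_W\|^2=e_{\spazio,2}(u)^2+\|\Pi_\spazio^\punti g\|^2$. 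On $\Omega^+$ one has $\vvvert G^{-1}\vvvert\le2$, so $\|\Pi_\spazio^\punti g\|^2\le 4\sum_{\indbasis}|\langle g,\basis_\indbasis\rangle_\punti|^2$. The key computation is a variance estimate: since $\langle g,\basis_\indbasis\rangle=0$, each $\langle g,\basis_\indbasis\rangle_\punti$ has mean zero and variance at most $\punti^{-1}\mathbb{E}(w^2g^2\basis_\indbasis^2)$, and summing over $\indbasis$ while using $\sum_\indbasis|\basis_\indbasis|^2=\spazio/w$ collapses the weight to $\mathbb{E}\big(\sum_\indbasis|\langle g,\basis_\indbasis\rangle_\punti|^2\big)\le \spazio\punti^{-1}\|g\|^2=\spazio\punti^{-1}e_{\spazio,2}(u)^2$. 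Hence $\mathbb{E}(\|u-u_W\|^2\mathbf{1}_{\Omega^+})\le(1+4\spazio/\punti)\,e_{\spazio,2}(u)^2$, and \eqref{condmw} turns $4\spazio/\punti$ into $\e(\punti)=4/(\costprop_\param\ln\punti)$, completing (ii). The main obstacle throughout is organizing the matrix-concentration step so that the constant $\costprop$ and the exact power $\punti^{-(\param+1)}$ emerge correctly; everything else reduces to the uniform norm identity above and standard Hilbert-space projection estimates.
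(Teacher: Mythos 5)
Your proposal is correct and follows essentially the same route as the paper's proof (the argument of [CM2016] recalled in Section~\ref{sec:opt_wei_ls_mio} and mirrored in the proof of Theorem~\ref{theo_mio}): the matrix Chernoff bound \eqref{eq:chernov} with $R=\spazio/\punti$ and $\delta=\tfrac12$ for item (i), the split on the event $\vvvert G-I\vvvert\le\tfrac12$ combined with the Pythagorean identity, $\vvvert G^{-1}\vvvert\le 2$ and a second-moment bound for item (ii), and the norm equivalence plus triangle inequality for item (iii). Your direct variance argument for $\sum_{k}\mathbb{E}\bigl(|\langle g,\basis_k\rangle_\punti|^2\bigr)$ is simply the i.i.d.\ specialization of the paper's term~I/term~II decomposition, in which the cross terms vanish exactly (each summand has mean zero) rather than being merely nonpositive as in the non-identically-distributed case.
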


The above theorem can be rewritten for a chosen confidence level, by setting $\conf=2\spazio \punti^{-(\param +1)}$ and replacing the corresponding $\param$ in \eqref{condmw}. 
For convenience we rewrite condition \eqref{condmw} with equality using the ceiling operator, since the number of samples is an integer and usually one wishes to minimize the number of samples $\punti$  satisfying \eqref{condmw} for a given $\spazio$.   

\begin{corollary}
\label{thm:coro_alpha}
For any $\conf \in (0,1)$ and any integer $\spazio\geq 1$,
if  
\begin{equation}
\punti  = \left\lceil \dfrac{ \spazio }{\costprop}  
\ln\left( 
\dfrac{2 \spazio }{ \conf }
\right)
\right\rceil, 
\quad \textrm{with $\costprop$ as in \eqref{condmw}},  
\label{eq:cond_alpha_k}
\end{equation}
and $\sample^1,\ldots,\sample^\punti$ are $\punti$ independent and identically distributed random samples from $\auxmes_\spazio$,
 then 
$$
\textrm{Pr}\left(  \vvvert G - I  \vvvert > \frac12 \right) \leq \conf.
$$
\end{corollary}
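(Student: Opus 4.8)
The plan is to invert the tail estimate of Theorem~\ref{theo2}(i), following the recipe announced just before the statement: I would fix the target failure probability to equal the right-hand side of that tail bound and then read off the required number of samples. Concretely, set $\conf = 2\spazio\,\punti^{-(\param+1)}$ and regard this as the equation \emph{defining} $\param$ in terms of $\spazio$, $\punti$ and $\conf$; taking logarithms gives
\beu
1+\param = \frac{\ln\!\big(2\spazio/\conf\big)}{\ln \punti}.
\eeu
Substituting the induced value $\costprop_\param = \costprop^{-1}(1+\param)$ into condition \eqref{condmw} and multiplying through by $\ln\punti>0$, the factor $\ln\punti$ cancels on both sides and \eqref{condmw} collapses to
\beu
\punti \;\ge\; \frac{\spazio}{\costprop}\,\ln\!\big(2\spazio/\conf\big).
\eeu
This cancellation is the crux of the argument; equivalently, it reflects that in the equality case of \eqref{condmw} the bound $2\spazio\,\punti^{-(\param+1)}$ of Theorem~\ref{theo2}(i) equals $2\spazio\exp(-\costprop\,\punti/\spazio)$, whose dependence on $\punti$ is elementary.

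Next I would use the ceiling in \eqref{eq:cond_alpha_k}: since $\lceil\,\cdot\,\rceil$ only rounds upward, the prescribed $\punti$ satisfies $\punti \ge \frac{\spazio}{\costprop}\ln(2\spazio/\conf)$, so \eqref{condmw} holds for the chosen $\param$. Theorem~\ref{theo2}(i) then gives $\textrm{Pr}\{\vvvert G-I\vvvert>\tfrac12\}\le 2\spazio\,\punti^{-(\param+1)}=\conf$, which is the assertion.

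The step that needs care is the self-consistency of the substitution --- that the $\param$ produced by the defining equation is admissible for Theorem~\ref{theo2}, and that the rounding is harmless. The rounding is benign because the tail bound decreases monotonically in $\punti$, so replacing the real threshold by its ceiling can only decrease the probability. Admissibility is the genuinely delicate point: for loose confidence levels (both $\conf$ and $\spazio$ large) the defining equation can force $\param\le 0$, outside the range $\param>0$ assumed in Theorem~\ref{theo2}. In that regime I would avoid the $\param$-parametrisation altogether and argue directly from the exponential bound $2\spazio\exp(-\costprop\,\punti/\spazio)$ underlying Theorem~\ref{theo2}(i): the chosen $\punti$ makes $\costprop\,\punti/\spazio\ge\ln(2\spazio/\conf)$, whence $2\spazio\exp(-\costprop\,\punti/\spazio)\le 2\spazio\cdot(\conf/(2\spazio))=\conf$ holds unconditionally.
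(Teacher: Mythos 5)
Your proposal is correct and follows essentially the same route as the paper, whose entire justification for Corollary~\ref{thm:coro_alpha} is the sentence preceding it: set $\conf = 2\spazio\,\punti^{-(\param+1)}$, substitute the induced $\param$ into \eqref{condmw} (where the $\ln\punti$ factors cancel exactly as you show), and rewrite the condition with equality and a ceiling. Your closing observation is a genuine refinement the paper glosses over: for loose confidence levels the induced $\param$ can indeed be nonpositive (e.g.\ $\spazio=1$, $\conf=0.99$ gives $\punti=7>2\spazio/\conf$, hence $\param<0$), so Theorem~\ref{theo2} with its hypothesis $\param>0$ does not literally apply there, and your fallback to the underlying bound $2\spazio\exp(-\costprop\,\punti/\spazio)$ --- which is \eqref{eq:chernov} with $\delta=\tfrac12$ and $R=\spazio/\punti$ --- closes that case cleanly and unconditionally.
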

When the evaluations of the function $u$ are noiseless, convergence estimates in probability with confidence level $1-\conf$ are immediate to obtain.  
If the evaluations of $u$ are noisy, then convergence estimates in probability of the form \eqref{nearoptimalprobw_2} can still be obtained by using techniques from large deviations to estimate the additional terms due to the presence of the noise, as shown in \cite{MNT2015} for standard least squares.

\subsection{Novel results}
\label{sec:opt_wei_ls_mio}
\noindent
The proof of Theorem~\ref{theo2}, and more generally the analysis in \cite{CDL,CM2016}, use a result 
from \cite{AW,Tropp} on tail bounds for sums of random matrices. We recall below this result 
from \cite[Theorem 1.1]{Tropp}, in a less general form that simplifies the presentation and  still fits our purposes. 
If $X^1,\ldots,X^\punti$ are independent $\spazio\times \spazio$ random self-adjoint and positive semidefinite matrices satisfying $\lambda_{\textrm{max}}(X^j) = \vvvert X^j \vvvert \leq R $
 almost surely and $\mathbb{E}( \sum_{j=1}^\punti X^j )=I$ then it holds 
\begin{equation}
\label{eq:tropp1}
\Pr\left(  \lambda_{\textrm{min}}\left(\sum_{j=1}^\punti X^j \right) \leq 1-\delta  \right) 
\leq \spazio \left(  \dfrac{e^{-\delta}}{(1-\delta)^{1-\delta}}  \right)^{\frac1R}, 
\quad \delta \in [0,1],
\end{equation}
\begin{equation}
\label{eq:tropp2}
\Pr\left(  \lambda_{\textrm{max}}\left(\sum_{j=1}^\punti X^j \right) \geq 1+\delta  \right) 
\leq \spazio \left(  \dfrac{e^{\delta}}{(1+\delta)^{1+\delta}}  \right)^{\frac1R}, \quad \delta \geq 0.  
\end{equation}

Since for $\delta \in (0,1)$ the upper bound in \eqref{eq:tropp2} is always greater or equal than the upper bound in \eqref{eq:tropp1}, it holds  that 
\begin{equation}
\label{eq:chernov}
\Pr\left(  \left\vvvert 
\sum_{j=1}^\punti X^j
-I \right\vvvert > \delta  \right) 
\leq 2\spazio \left(  \dfrac{e^{\delta}}{(1+\delta)^{1+\delta}}  \right)^{\frac1R}.   
\end{equation}
Finding a suitable value for $R$ and taking $\delta=\frac12$ leads to item (i) in Theorem~\ref{theo2}, see \cite{CM2016} for the proof.   

One of the features of the bounds \eqref{eq:tropp1}-\eqref{eq:tropp2} is that the matrices $X^1,\ldots,X^\punti$ need not be identically distributed. 
This property has not been exploited in the analysis in \cite{CM2016}.
The first contribution of this paper is the following Theorem~\ref{theo_mio}, which states a similar result as Theorem~\ref{theo2}, but using a different type of random samples that is very advantageous in itselft as well as for the forthcoming application to the adaptive setting. 

\begin{theorem}
\label{theo_mio}
For any  $\conf\in(0,1)$ and any integer $\spazio\geq 1$, if 
\begin{equation}
\label{eq:cond_tau_points}
\punti=\intero \spazio,
\quad 
\textrm{ with }
\intero:= \left\lceil 
\costprop^{-1} \ln\left( 
\dfrac{2\spazio}{\conf}
 \right) 
\right\rceil, 
\quad
\textrm{$\costprop$ as in \eqref{condmw}},
\end{equation} 
and 
$\sample^1, \ldots, \sample^{\spazio \intero}$ 
is a set of independent random samples such that for any $j=1,\ldots,\spazio$ the samples $\sample^{(j-1)\intero+1}, \ldots, \sample^{j \intero}$ are identically distributed according to $\newmes_j$ defined in \eqref{eq:def_mix_mes} 
then the following holds:
\begin{enumerate}
\item[{\rm (i)}] the matrix $G$ satisfies the tail bound
\begin{equation}
{\rm Pr} \, \left\{ \vvvert G- I \vvvert > \frac12 \right\} \leq
\conf; 
\label{tailhalfw}
\end{equation}
\item[{\rm (ii)}] if $u\in L^2(\setx,\errmes)$ then the estimator $u_C$ satisfies, 
\begin{equation}
\mathbb{E}(\|u-u_C\|^2)\leq \left(1+ 
\dfrac{4\costprop}{\ln( 2\spazio / \conf ) }
\right)e_{\spazio,2}(u)^2
+\conf \|u\|^2;
\label{nearoptimalcw}
\end{equation}
\item[{\rm (iii)}] with probability larger than $1-\conf$, the estimator $u_W$ satisfies
\begin{equation*}
\|u-u_W\|\leq (1+\sqrt 2) e_{\spazio,\infty,w}(u),
\end{equation*}
for all $u$ such that $\| \sqrt{w} u \|_{L^\infty}<+\infty$. 
\end{enumerate}
\end{theorem}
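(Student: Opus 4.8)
The plan is to realise the Gramian $G$ as a sum of independent rank-one matrices, apply the non-identically-distributed Chernoff bound \eqref{eq:chernov} for item (i), and then obtain items (ii) and (iii) from a bias--variance decomposition and a deterministic argument on the stability event, respectively. Throughout, the block structure of the samples is exactly what replaces the i.i.d.\ hypothesis used in Theorem~\ref{theo2}. For (i) I set $\bv(\sample)=(\basis_1(\sample),\ldots,\basis_\spazio(\sample))^\top$ and $X^l:=\tfrac{1}{\punti} w(\sample^l)\,\bv(\sample^l)\bv(\sample^l)^\top$, so that $G=\sum_{l=1}^{\punti}X^l$ is a sum of independent, self-adjoint, positive semidefinite, rank-one matrices. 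The first key point is that, by the very definition of $w$, $\vvvert X^l\vvvert=\tfrac{1}{\punti} w(\sample^l)\sum_{\indbasis}|\basis_\indbasis(\sample^l)|^2=\spazio/\punti=1/\intero$ holds deterministically for every realisation, so one may take $R=1/\intero$ with certainty rather than merely almost surely. The second point is that, grouping the $\punti=\intero\spazio$ samples into the $\spazio$ blocks (block $j$ drawn from $\newmes_j$) and using $\sum_{\indbasis}|\basis_\indbasis|^2=\spazio/w$ together with the $L^2$-orthonormality of $(\basis_\indbasis)$, one obtains $\mathbb{E}(G)=I$; this is precisely the preservation identity advertised in the introduction. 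Feeding $R=1/\intero$ and $\delta=\tfrac12$ into \eqref{eq:chernov} gives the tail $2\spazio\exp(-\intero\costprop)$, and the choice of $\intero$ in \eqref{eq:cond_tau_points} makes this at most $\conf$, which is \eqref{tailhalfw}.

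For (ii) I write $g:=u-\Pi_\spazio u$, so that $g\perp V$ in $L^2$ and $\|g\|=e_{\spazio,2}(u)$. Since $\Pi_\spazio^\punti$ fixes $V$, one has $u_W=\Pi_\spazio u+\Pi_\spazio^\punti g$, giving the orthogonal splitting $\|u-u_W\|^2=e_{\spazio,2}(u)^2+\|\Pi_\spazio^\punti g\|^2$. On the stability event $\Omega:=\{\vvvert G-I\vvvert\le\tfrac12\}$ one has $\vvvert G^{-1}\vvvert\le2$, hence, writing $\Pi_\spazio^\punti g=\sum_\indbasis c_\indbasis\basis_\indbasis$ with $c=G^{-1}d$ and $d_\indbasis=\langle g,\basis_\indbasis\rangle_\punti$, orthonormality yields $\|\Pi_\spazio^\punti g\|^2=\|c\|_{\ell_2}^2\le4\|d\|_{\ell_2}^2$. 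The heart of the proof is the bound $\mathbb{E}(\|d\|_{\ell_2}^2)\le\tfrac{1}{\intero}\,e_{\spazio,2}(u)^2$: each entry $\langle g,\basis_\indbasis\rangle_\punti$ has mean zero by the same preservation identity, so independence across samples gives $\mathbb{E}(\|d\|_{\ell_2}^2)=\punti^{-2}\sum_l\sum_\indbasis\mathrm{Var}\!\big(w(\sample^l)g(\sample^l)\basis_\indbasis(\sample^l)\big)$, and bounding the variance by the second moment, then summing over $\indbasis$ and over the $\spazio$ blocks by means of the two identities $\sum_{\indbasis}|\basis_\indbasis|^2=\spazio/w$, collapses the whole expression to $\tfrac{1}{\intero}\|g\|^2$. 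Consequently $\mathbb{E}(\|\Pi_\spazio^\punti g\|^2\mathbf{1}_\Omega)\le\tfrac{4}{\intero}e_{\spazio,2}(u)^2$; adding the off-event contribution $\mathbb{E}(\|u-u_C\|^2\mathbf{1}_{\Omega^c})=\|u\|^2\Pr(\Omega^c)\le\conf\|u\|^2$ and using $1/\intero\le\costprop/\ln(2\spazio/\conf)$ reproduces \eqref{nearoptimalcw}. I expect this second-moment estimate to be the main obstacle and the genuine departure from \cite{CM2016}: because the samples are not identically distributed, one cannot invoke the i.i.d.\ variance identity and must instead track the per-block contributions and verify that the block multiplicities $\intero$ recombine, through the Christoffel identities, into the clean factor $1/\intero$.

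For (iii) the argument is deterministic on $\Omega$ and does not use the sampling law beyond stability, so it mirrors Theorem~\ref{theo2}(iii). Picking $v^*\in V$ that (nearly) attains $e_{\spazio,\infty,w}(u)$, the normalisation $\int w^{-1}\,d\errmes=1$ gives $\|u-v^*\|\le e_{\spazio,\infty,w}(u)$; the pointwise bound $w|u-v^*|^2\le e_{\spazio,\infty,w}(u)^2$ together with the discrete-seminorm contraction of $\Pi_\spazio^\punti$ gives $\|v^*-u_W\|_\punti\le e_{\spazio,\infty,w}(u)$, which the norm equivalence on $\Omega$ upgrades to $\|v^*-u_W\|\le\sqrt2\,e_{\spazio,\infty,w}(u)$; the triangle inequality then yields the constant $1+\sqrt2$ on the event of probability $\ge1-\conf$ furnished by (i). By contrast with (ii), the deterministic value $R=1/\intero$ in (i) is a bonus of the structured design: it sharpens the tail and lets one dispense with the extra $\punti^{-\param}$ slack present in Theorem~\ref{theo2}.
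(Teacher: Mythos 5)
Your proposal is correct and takes essentially the same approach as the paper's proof: the same decomposition of $G$ into independent rank-one matrices with the deterministic bound $R=1/\intero$ and $\mathbb{E}(G)=I$ via the block structure for (i), and the same stability-event/norm-equivalence argument for (iii). Your variance-based estimate in (ii) (mean zero of $\langle g,\basis_k\rangle_\punti$ by the preservation identity, independence across samples, then bounding variance by second moment) is an equivalent repackaging of the paper's splitting into cross terms (its term I, which equals $-\sum_{i}\bigl(\mathbb{E}(w(x^i)g(x^i)\basis_k(x^i))\bigr)^2\leq 0$ and is discarded) and diagonal terms (its term II), so the two computations coincide.
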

\begin{proof}
Proof of (i): the matrix $G$ can be decomposed as 
$G=\sum_{\indsample=1}^{ \spazio} \sum_{k=1}^{\intero} X^{\indsample k}$
where the 
$X^{\indsample k}$,  $\indsample=1,\ldots,\spazio$,  $k=1,\ldots,\intero$, are mutually independent and, given any $\indsample=1,\ldots,\spazio$,  
the $X^{\indsample 1},\ldots,X^{\indsample \intero}$ are identically distributed copies of the rank-one random matrix $X(\sample)$ defined element-wise as 
$$
X_{pq}(\sample) = \dfrac{1}{\intero \spazio} w(\sample) \basis_p(\sample) \basis_q(\sample), \quad p,q=1,\ldots,\spazio,
$$
with $\sample$ being a random variable distributed according to $\newmes_j$. 
Notice that the 
$X^{\indsample k}$,  $\indsample=1,\ldots,\spazio$,  $k=1,\ldots,\intero$,  
are not identically distributed. 
Anyway, using \eqref{eq:weight_function_def}, it holds that for any $p,q=1,\ldots,n$, 
\begin{align*}
\mathbb{E}(G_{pq}) = & 
\mathbb{E}
\left(
\sum_{\indsample=1}^{ \spazio}
\sum_{k=1}^{\intero } 
X_{pq}^{\indsample k}  
\right)  \\
 = &
\sum_{k=1}^{\intero } 
\sum_{\indsample=1}^{ \spazio}
\mathbb{E}\left(
X_{pq}^{\indsample k}  \right)  \\
= & 
\sum_{k=1}^{\intero } 
\sum_{\indsample=1}^{ \spazio}
\int_{\setx} \dfrac{1}{ \intero \spazio} w(x) \basis_p(x) \basis_q(x) \basis_\indsample(x)^2 d\errmes 
\\
= & 
\dfrac{1}{ 
\spazio}
\int_{\setx}  w(x) \basis_p(x) \basis_q(x) 
\sum_{\indsample=1}^{ \spazio}
\basis_\indsample(x)^2 d\errmes 
\\
= &
 \int_\setx \basis_p(x) \basis_q(x) d\errmes 
\\
= & \delta_{pq},
\end{align*}
and therefore $\mathbb{E}(G)=I$.  
We then use \eqref{eq:chernov} to obtain that if 
$
\vvvert X^{\indsample k}(\sample) \vvvert \leq R  
$
almost surely for any $\indsample=1,\ldots,\spazio$ and any $k=1,\ldots,\intero$
then for any $\delta\in(0,1)$ it holds 
$$
\Pr\left(  \vvvert  G-I  \vvvert > \delta  \right) \leq 2 \spazio \exp\left( - \frac{
c_\delta
}{R}   \right),  
$$
with $c_\delta:=(1+\delta) \ln(1+\delta) - \delta>0$. 
We choose $\delta=\frac12$ and obtain $c_{\frac12}=\costprop$
as in \eqref{condmw}. 
Since $X^{\indsample k}$ has rank one and
$$ \vvvert X^{\indsample k}(\sample)  \vvvert^2 =
\textrm{trace}\left( (X^{\indsample k}(\sample))^\top  X^{\indsample k}(\sample) \right)
=
\left(
 \frac{1}{\intero \spazio} w(x) \sum_{ \ell =1}^\spazio \basis_\ell(x)^2 
\right)^2
= \frac{1}{\intero^2}
$$ 
for all $\indsample=1,\ldots,\spazio$, for all $k=1,\ldots,\intero$ and uniformly for all $\sample \in \setx$, we can take $R=1/\intero$ and obtain that, if $\punti$ and $\spazio$ satisfy \eqref{eq:cond_tau_points} then 
\begin{align*}
\Pr\left(  \vvvert   G-I \vvvert > \frac12   \right)   \leq   2\spazio e^{ - \costprop \intero }       
 \leq   2\spazio e^{ - \ln( 2\spazio/\conf ) }  = \conf.     
\end{align*}

The overall structure of the proof of ii) follows \cite{CM2016}, with some differences due to the fact that here the samples $\sample^1,\ldots,\sample^\punti$ are not identically distributed.
First we identify the underlying probability measure associated to these samples.
The $\punti=\intero \spazio$ samples $\sample^1,\ldots,\sample^\punti$ are all mutually independent, 
and are subdivided into $\intero$ blocks, where each block contains $\spazio$ random samples. 
More precisely, each block contains one random sample distributed as 
$\newmes_j$, 
for $j=1,\ldots,\spazio$. 
The probability measure of each block $z=(z^1,\ldots,z^\spazio)$ is 
$
d\mymes_\spazio := \prod_{j=1}^\spazio |\basis_j(z^j)|^2 \, d\errmes,  
$
where each $z^j \in \setx$.  
The probability measure of $\intero$ blocks,  
with all the $\intero \spazio$ random samples $\sample^1,\ldots,\sample^\punti$, 
is 
$
d\mymes^\punti := \otimes^\intero d\mymes_\spazio. 
$
Let $\Omega$ be the set of all possible draws from $\mymes^\punti$, $\Omega_+$ be the set  of all draws such that 
$$
\vvvert G-I \vvvert \leq \frac12, 
$$
and  $\Omega_- := \Omega \setminus \Omega_+$ be its complement.
Under the assumptions of Theorem~\ref{theo_mio}, from  \eqref{tailhalfw} it holds that 
$$
\Pr(\Omega_-) = \int_{\Omega_-} d\mymes^\punti \leq \conf. 
$$

Denote $g:=u-\Pi_\spazio u$. We consider the event $\vvvert G-I \vvvert \leq \frac12$, where it holds 
$$
\| u-u_C \|^2 = \| u- u_W \|^2 = \| g  \|^2 + \| \Pi_\spazio^\punti g \|^2, 
$$
since $\Pi_\spazio^\punti \Pi_\spazio u = \Pi_\spazio u$ and $g$ is orthogonal to $V$. 
Denoting with $(a_1,\ldots,a_\spazio)^\top$ the solution to the linear system 
$
G a = b, 
$ 
and $b=(\langle g, \basis_k \rangle_\punti )_{ k=1,\ldots,\spazio} $ we have that  
$$
\| u-u_C \|^2 = e_{\spazio,2}(u)^2 +  \sum_{k=1}^\spazio | a_k  |^2.
$$
Since $\vvvert G-I \vvvert \leq \frac12 
\implies 
\vvvert G \vvvert 
\geq \frac12
\implies 
\vvvert G^{-1} \vvvert \leq 2, 
$
from the line above 
$$
\| u - u_C \|^2  \leq e_{\spazio,2}(u)^2 + 4 \sum_{k=1}^\spazio | \langle  g,\basis_k \rangle_\punti |^2.   
$$
In the event $\vvvert G-I \vvvert> \frac12$ by the definition of $u_C$ in \eqref{eq:def_cond_est} we have $\| u-u_C\|=\| u \|$.
Taking the expectation of $\| u - u_C \|^2$ w.r.t.~$\mymes^\punti$ we obtain that 
\begin{align*}
\mathbb{E}(\| u - u_C \|^2 )
= & 
\int_{\Omega_+} 
\| u - u_C \|^2 \, d\mymes^\punti 
+
\int_{\Omega_-} 
\| u - u_C \|^2 \, d\mymes^\punti 
\\
 \leq & \left( e_{\spazio,2}(u)^2 + 4 \sum_{k=1}^\spazio 
\mathbb{E}( | \langle  g,\basis_k \rangle_\punti |^2  ) 
\right) \Pr(\Omega_+)
+ 
\| u \|^2
\Pr(\Omega_-)
\\
 \leq &  e_{\spazio,2}(u)^2 + 4 \sum_{k=1}^\spazio 
\mathbb{E}( | \langle  g,\basis_k \rangle_\punti |^2  ) 
+ 
\conf \| u \|^2. 
\end{align*}

We now study the second term in the above expression, crucially exploiting 
the structure of the random samples and the fact that their 
expectations still pile up and simplify,  
despite the samples are not identically distributed:   
\begin{align*}
\sum_{k=1}^\spazio 
\mathbb{E}( | \langle  g,\basis_k \rangle_\punti |^2  )
= & 
\sum_{k=1}^\spazio   
\mathbb{E}
\left( 
\dfrac{1}{\punti^2}
\sum_{i=1}^\punti
\sum_{j=1}^\punti
w(x^i) w(x^j) g(x^i) g(x^j) \basis_k(x^i) \basis_k(x^j)
\right)
\\
= & 
\dfrac{1}{\punti^2}
\sum_{k=1}^\spazio   
\sum_{i=1}^\punti
\sum_{j=1}^\punti
\mathbb{E}
\left( 
w(x^i) w(x^j) g(x^i) g(x^j) \basis_k(x^i) \basis_k(x^j)
\right)
\\
= &
\dfrac{1}{\punti^2}
\left(
\sum_{k=1}^\spazio   
\underbrace{
\sum_{i=1}^\punti
\sum_{j=1\atop j\neq i}^\punti
\mathbb{E}
\left( 
w(x^i) w(x^j) g(x^i) g(x^j) \basis_k(x^i) \basis_k(x^j)
\right)
}_{I}
+
\underbrace{
\sum_{k=1}^\spazio   
\sum_{i=1}^\punti
\mathbb{E}
\left( 
\left(
w(x^i) g(x^i) \basis_k(x^i) 
\right)^2
\right)
}_{II}
\right)
. 
\end{align*}
For term I: with any $\iteraz=1,\ldots,\spazio$, using in sequence the independence of the samples, the structure of the samples and the definition of $w$ we obtain
\begin{align*}
I = &    
\sum_{i=1}^\punti
\sum_{j=1\atop j\neq i}^\punti
\mathbb{E}
\left( 
w(x^i) g(x^i) \basis_k(x^i) 
\right)
\mathbb{E}
\left( 
 w(x^j)  g(x^j)  \basis_k(x^j)
\right)
\\
= &
\sum_{i=1}^\punti
\mathbb{E}
\left( 
w(x^i) g(x^i) \basis_k(x^i) 
\right)
\sum_{j=1\atop j\neq i}^\punti
\mathbb{E}
\left( 
 w(x^j)  g(x^j)  \basis_k(x^j)
\right)
\\
= &
\sum_{i=1}^\punti
\mathbb{E}
\left( 
w(x^i) g(x^i) \basis_k(x^i) 
\right)
\left( 
\sum_{j=1}^\punti
\mathbb{E}
\left( 
 w(x^j)  g(x^j)  \basis_k(x^j)
\right)
- 
\mathbb{E}
\left( 
 w(x^i)  g(x^i)  \basis_k(x^i)
\right)
\right)
\\
= &
\sum_{i=1}^\punti
\mathbb{E}
\left( 
w(x^i) g(x^i) \basis_k(x^i) 
\right)
\left( 
\sum_{\ell=1}^\intero
\sum_{j=1}^\spazio
\int_\setx
w(x)
 g(x)  \basis_k(x)
\basis_j(x)^2
d\errmes
- 
\mathbb{E}
\left( 
 w(x^i)  g(x^i)  \basis_k(x^i)
\right)
\right)
\\
= &
\sum_{i=1}^\punti
\mathbb{E}
\left( 
w(x^i) g(x^i) \basis_k(x^i) 
\right)
\left( 
\sum_{\ell=1}^\intero
\int_\setx
w(x)
 g(x)  \basis_k(x)
\sum_{j=1}^\spazio
\basis_j(x)^2
d\errmes
- 
\mathbb{E}
\left( 
 w(x^i)  g(x^i)  \basis_k(x^i)
\right)
\right)
\\
= &
\sum_{i=1}^\punti
\mathbb{E}
\left( 
w(x^i) g(x^i) \basis_k(x^i) 
\right)
\left( 
\intero \spazio 
\underbrace{\int_\setx
 g(x)  \basis_k(x)
d\errmes
}_{=0 }
- 
\mathbb{E}
\left( 
 w(x^i)  g(x^i)  \basis_k(x^i)
\right)
\right)
\\
= &
-
\sum_{i=1}^\punti
\left(
\mathbb{E}
\left( 
w(x^i) g(x^i) \basis_k(x^i) 
\right)
\right)^2  <0, 
\end{align*}
where $\langle g, \basis_k \rangle=0$ for all $k=1,\ldots,n$ because $g$ is orthogonal to $V$.  
For term II, again exploiting the structure of the samples and the definition of $w$ it holds    
\begin{align*}
II = & 
\sum_{i=1}^\punti
\mathbb{E}
\left( 
w(x^i)^2 g(x^i)^2
\sum_{k=1}^\spazio   
 \basis_k(x^i)^2 
\right)
\\
= &
\spazio
\sum_{i=1}^\punti
\mathbb{E}
\left( 
w(x^i) g(x^i)^2
\right)
\\
= &
\spazio
\sum_{j=1}^\intero
\sum_{k=1}^\spazio
\int_\setx
w(x)
g(x)^2  \basis_k(x)^2 d\errmes
\\
= &
\spazio
\sum_{j=1}^\intero
\int_\setx
w(x)
g(x)^2  
\sum_{k=1}^\spazio
\basis_k (x)^2 d\errmes
\\
= &
\spazio^2
\intero 
\int_\setx
g(x)^2   d\errmes
\\
= &
\spazio^2 \intero
\|
g
\|^2. 
\end{align*}

Putting the pieces together, replacing $\punti=\intero \spazio$ in term II and neglecting the nonpositive contribution of term I, we obtain 
$$
\mathbb{E}(\| u - u_C \|^2 ) \leq \left( 1+ \dfrac{ 4\spazio }{ \punti }  \right) e_{\spazio,2}(u)^2 +
  \conf \| u \|^2.  
$$
Since $\spazio/\punti=\intero^{-1} \leq \costprop/\ln(2\spazio/\conf)$ we finally obtain 
\eqref{nearoptimalcw}. 

The proof of iii) uses i) and then proceeds in the same way as for the proof of iii) in Theorem~\ref{theo2} from \cite{CM2016}. 
From the definition of the spectral norm 
$$
\vvvert G-I \vvvert \leq \frac12 \iff \frac12 \| v \|^2 \leq \| v \|_\punti^2 \leq \frac32 \|v \|^2, \quad v \in V, 
$$
and this norm equivalence holds at least with probability $1-\conf$ 
from item (i) under condition \eqref{eq:cond_tau_points}. 
Using the above norm equivalence, the Pythagorean identity
$\| u - v \|_\punti^2 = \| v - u_W \|_\punti^2
+  \| u - u_W  \|_\punti^2
$,  
and 
$\max( \| u -v \|_\punti , \| u -v \| )\leq \| \sqrt w ( u-v) \|_{L^\infty}$, 
for any $v \in V$ it holds that 
\begin{align*}
\| u-u_W \| \leq &
\|  u - v\| + \| v - u_W \| 
\\
\leq &
\| u - v \| + \sqrt2 \| v - u_W \|_\punti
\\
\leq &
\| u - v \| + \sqrt2 \| u - v \|_\punti   
\\
\leq &
(1+\sqrt2) \| \sqrt{w}( u - v) \|_{L^\infty}. 
\end{align*}
Since $v$ is arbitrary we obtain the thesis.  
\end{proof}

The next Corollary~\ref{coro_theo_mio}
extends 
Theorem~\ref{theo_mio}
to any $\punti$ (not necessarily an integer multiple of $\spazio$) 
satisfying \eqref{eq:cond_alpha_k}.
For any (fixed) $\intero=0,\ldots \lfloor \punti/\spazio \rfloor$, 
the set of $\punti$ random samples in Corollary~\ref{coro_theo_mio} is obtained by merging $\punti-\intero \spazio$ 
random samples distributed as $\auxmes_\spazio$ 
and $\intero$ random samples from $\newmes_j$ for all $j=1,\ldots,\spazio$.    
When 
$\punti$ is an integer  multiple  of $\spazio$ and $\intero=\lfloor \punti/\spazio \rfloor$, 
Corollary~\ref{coro_theo_mio} 
gives 
Theorem~\ref{theo_mio} as a particular case. 
When $\intero=0$, all the random samples in  Corollary~\ref{coro_theo_mio} 
are distributed as  $\auxmes_\spazio$, like in Corollary~\ref{thm:coro_alpha}.

\begin{corollary}
\label{coro_theo_mio}
For any  $\conf\in(0,1)$, any integers $\spazio\geq 1$, $\punti \geq \spazio$ and $\intero=0,\ldots,\lfloor \punti/\spazio \rfloor$,  
if 
$\punti$ satisfies \eqref{eq:cond_alpha_k}
and 
$\sample^1, \ldots, \sample^{\punti}$ 
is a set of independent random samples such that for any $j=1,\ldots,\spazio$ 
the $\sample^{(j-1)\intero+1}, \ldots, \sample^{j \intero}$ are identically distributed according to $\newmes_j$ defined in \eqref{eq:def_mix_mes}, 
and 
the $\sample^{\spazio 
\intero
+1  },\ldots,\sample^{\punti }$ are identically distributed as 
$\auxmes_\spazio$,   
then items i), ii) and iii) of Theorem~\ref{theo_mio} hold true. 
\end{corollary}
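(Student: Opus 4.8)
The plan is to run the proof of Theorem~\ref{theo_mio} essentially verbatim, after observing that the two families of samples---the $\intero\spazio$ structured ones and the $\punti-\intero\spazio$ i.i.d.\ ones---share the two structural properties that drove that argument: a common per-sample spectral-norm bound and compatible mean contributions.

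First I would write $G=\sum_{\indsample=1}^{\punti}X^{\indsample}$, where, from the definition of $\langle\cdot,\cdot\rangle_\punti$ in \eqref{eq:disc_inn_prod}, each summand is the rank-one matrix with entries $X^{\indsample}_{pq}=\frac{1}{\punti}w(\sample^{\indsample})\basis_p(\sample^{\indsample})\basis_q(\sample^{\indsample})$. The $X^\indsample$ are mutually independent, and by \eqref{eq:weight_function_def} the identity $w(\sample)\sum_{\ell=1}^{\spazio}\basis_\ell(\sample)^2=\spazio$ holds pointwise, so $\vvvert X^{\indsample}\vvvert=\spazio/\punti$ uniformly in $\sample$ and regardless of the distribution of $\sample^\indsample$; hence I may take $R=\spazio/\punti$. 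To get $\mathbb{E}(G)=I$ I would split the sum: the $\intero\spazio$ structured samples contribute $\frac{\intero}{\punti}\int_\setx w\,\basis_p\basis_q\sum_{j=1}^\spazio\basis_j^2\,d\errmes=\frac{\intero\spazio}{\punti}\delta_{pq}$ after piling the block expectations via $\sum_j\basis_j^2$, while each i.i.d.\ sample from $\auxmes_\spazio$ gives $\frac{1}{\punti}\int_\setx\basis_p\basis_q\,d\errmes=\frac{1}{\punti}\delta_{pq}$, summing to $\frac{\punti-\intero\spazio}{\punti}\delta_{pq}$; the two pieces add to $\delta_{pq}$. Item (i) then follows by inserting $R=\spazio/\punti$ and $\delta=\tfrac12$ into \eqref{eq:chernov} and using \eqref{eq:cond_alpha_k}, exactly as for \eqref{tailhalfw}.

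For item (ii) I would keep the decomposition of $\sum_k\mathbb{E}(|\langle g,\basis_k\rangle_\punti|^2)$ into the cross term $I$ (for $i\neq j$) and the diagonal term $II$ (for $i=j$), with $g:=u-\Pi_\spazio u$. The one new point is the single-sample mean $\mathbb{E}(w(\sample^i)g(\sample^i)\basis_k(\sample^i))$: for an i.i.d.\ sample from $\auxmes_\spazio$ it equals $\langle g,\basis_k\rangle=0$ directly (the weight $w$ cancels the Christoffel average), whereas over a structured block it sums to $\intero\spazio\langle g,\basis_k\rangle=0$; in either case $\sum_{i=1}^{\punti}\mathbb{E}(w(\sample^i)g(\sample^i)\basis_k(\sample^i))=0$, so the same telescoping as in Theorem~\ref{theo_mio} gives $I=-\sum_i(\mathbb{E}(w(\sample^i)g(\sample^i)\basis_k(\sample^i)))^2\leq0$. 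For $II$, after collapsing $\sum_k\basis_k^2$ and one power of $w$ through $w\sum_k\basis_k^2=\spazio$, each sample contributes $\spazio\,\mathbb{E}(w(\sample^i)g(\sample^i)^2)$, which equals $\spazio\|g\|^2$ for an i.i.d.\ sample and sums to $\spazio\cdot\intero\spazio\|g\|^2$ over a structured batch; together the $\punti$ samples give $II=\spazio\punti\|g\|^2$. Hence $\sum_k\mathbb{E}(|\langle g,\basis_k\rangle_\punti|^2)\leq\frac{\spazio}{\punti}\|g\|^2$, and combining with $\Pr(\Omega_-)\leq\conf$ from item (i) yields $\mathbb{E}(\|u-u_C\|^2)\leq(1+4\spazio/\punti)e_{\spazio,2}(u)^2+\conf\|u\|^2$; condition \eqref{eq:cond_alpha_k} then turns $4\spazio/\punti$ into the logarithmic factor of \eqref{nearoptimalcw}.

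Item (iii) needs nothing new: it rests only on item (i) and the deterministic equivalence $\vvvert G-I\vvvert\leq\tfrac12\iff\tfrac12\|v\|^2\leq\|v\|_\punti^2\leq\tfrac32\|v\|^2$ on $V$, so the chain of inequalities ending in $(1+\sqrt2)\|\sqrt{w}(u-v)\|_{L^\infty}$ is copied verbatim. I do not expect a genuine obstacle here; the only care required is bookkeeping, namely verifying that the per-sample spectral-norm bound and the aggregate first- and second-moment identities coincide for the two sample families, so that the structured-sample proof of Theorem~\ref{theo_mio} and the i.i.d.-sample proof of Theorem~\ref{theo2} fuse without introducing new terms.
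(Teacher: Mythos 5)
Your proposal is correct and follows essentially the same route as the paper's own proof: the same decomposition of $G$ into independent rank-one matrices with the uniform bound $\vvvert X^{\indsample}\vvvert=\spazio/\punti$, the same split of the mean, of term I, and of term II into structured and i.i.d.\ contributions (each vanishing or collapsing via $w\sum_\ell\basis_\ell^2=\spazio$), and the same verbatim reuse of the argument for item (iii). The only differences are notational (a single flat sum over samples instead of the paper's double-indexed sum), so there is nothing to add.
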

\begin{proof}
We proceed as in the proof of Theorem~\ref{theo_mio} but using the decomposition  
$G=\sum_{\indsample=1}^{ \spazio} \sum_{k=1}^{\intero} X^{\indsample k} + \sum_{\ell= \spazio \intero +1 }^{\punti} X^{\ell} $, where all the $X^{\indsample k}$ and $X^\ell$ are mutually independent, 
and given any $\indsample=1,\ldots,\spazio$, the $X^{\indsample 1},\ldots,X^{\indsample \intero}$ are identically distributed copies of the rank-one random matrix $X(\sample)$ defined element-wise as 
$$
X_{pq}(\sample) = \dfrac{1}{
\punti
} w(\sample) \basis_p(\sample) \basis_q(\sample), \quad p,q=1,\ldots,\spazio,
$$
with $\sample$ being a random variable distributed according to $\newmes_j$, 
and the $X^\ell$ 
are identically distributed copies of 
$X(\sample)$ 
but with $\sample$ being a random variable distributed according to $\auxmes_\spazio$.  
For any $p,q=1,\ldots,n$, 
$\intero=0,\ldots,\lfloor \punti/\spazio \rfloor$,
\begin{align*}
\mathbb{E}(G_{pq}) = & 
\sum_{k=1}^{\intero } \sum_{\indsample=1}^{ \spazio} \mathbb{E}\left( X_{pq}^{\indsample k}  \right)  
+
\sum_{\ell=\spazio \intero +1}^{\punti } 
\mathbb{E}\left( X_{pq}^{\ell}  \right)  
\\
= & 
\sum_{k=1}^{\intero }  \sum_{\indsample=1}^{ \spazio} \int_{\setx} \dfrac{1}{\punti } w(\sample) \basis_p(\sample) \basis_q(\sample) \basis_\indsample(\sample)^2 d\errmes 
+
\sum_{\ell=\spazio \intero +1}^{\punti }   \int_{\setx} \dfrac{1}{\punti } w(\sample) \basis_p(\sample) \basis_q(\sample)
\dfrac{ \sum_{\indsample=1}^{ \spazio} \basis_\indsample(x)^2}{\spazio} d\errmes 
\\
= & 
\dfrac{\spazio \intero}{ 
\punti}
\int_{\setx}  
\basis_p(\sample) \basis_q(\sample) 
d\errmes 
+
\dfrac{\punti - \spazio \intero}{ 
\punti}
\int_{\setx}  
\basis_p(\sample) \basis_q(\sample) 
d\errmes 
= 
\delta_{pq}. 
\end{align*}
When $\intero=0$ ($\punti$ is an integer multiple of $\spazio$ and $\intero=\lfloor \punti/\spazio \rfloor$), 
the leftmost (rightmost) sum in the first equation above is empty.   
Since 
$$ \vvvert X^{\indsample k}(\sample)  \vvvert 
= 
\vvvert X^{\ell}(\sample)  \vvvert
= 
\frac{\spazio}{\punti}
$$ 
for all $\indsample=1,\ldots,\spazio$, for all $k=1,\ldots,\intero$, 
for all $\ell=\spazio \intero +1,\ldots, \punti$ 
 and uniformly for all $\sample \in \setx$, we can take $R=\spazio/\punti$ and obtain that if $\punti$ and $\spazio$ satisfy 
\eqref{eq:cond_alpha_k} 
then \eqref{tailhalfw} 
holds true. 

For the proof of 
\eqref{nearoptimalcw}, 
we proceed as in 
the proof of 
Theorem~\ref{theo_mio} 
with two differences. When bounding term I, we split the random samples as  
\begin{align*}
\sum_{ \indsample =1 }^{ \punti } \mathbb{E}( w(\sample^\indsample) g(\sample^\indsample) \basis_k(\sample^\indsample) ) = &
\sum_{\ell=1}^{\intero }  \sum_{\indsample=1}^{ \spazio} \int_{\setx} 
w(\sample) g(\sample) \basis_k(\sample) \basis_\indsample(\sample)^2 d\errmes 
+
\sum_{\ell=\spazio \intero +1}^{\punti }   \int_{\setx} 
w(\sample) g(\sample) \basis_k(\sample)
\dfrac{ \sum_{\indsample=1}^{ \spazio} \basis_\indsample(\sample)^2}{\spazio} d\errmes \\
= & \spazio \intero 
\int_{\setx} 
 g(\sample) \basis_k(\sample) 
d\errmes 
+
(\punti - \spazio \intero) 
\int_{\setx} 
g(\sample) \basis_k(\sample)
d\errmes = 0,  
\end{align*} 
and this term again vanishes due to the orthogonality of $g$ to $\basis_k$. 
For term II, splitting again the random samples 
we obtain 
\begin{align*}
\sum_{k=1}^{\spazio}
\sum_{\indsample=1}^{\punti}
\mathbb{E}\left( \left( w(\sample^\indsample) g(\sample^\indsample) \basis_k(\sample^\indsample) \right)^2 \right)
= &
\sum_{\indsample=1}^{\punti}
\mathbb{E}\left(  w(\sample^\indsample)^2 g(\sample^\indsample)^2 
\sum_{k=1}^{\spazio}
\basis_k(\sample^\indsample)^2 \right)
=
\spazio
\sum_{\indsample=1}^{\punti}
\mathbb{E}\left(  
w(\sample) g(\sample)^2 
\right)
\\
= &
\spazio 
\sum_{\ell=1}^{\intero}
\sum_{\indsample=1}^{\spazio}
\int_{\setx} 
w(\sample) g(\sample)^2 \basis_j(\sample)^2 
d\errmes 
+
\spazio 
\sum_{\ell=\spazio \intero + 1}^{\punti}
\int_{\setx}   
w(\sample) g(\sample)^2 
\dfrac{
\sum_{k=1}^{\spazio} \basis_k(\sample)^2 
}{\spazio}
d\errmes 
\\
= & \spazio^2 \intero \|g \|^2 + \spazio (\punti - \spazio \intero) \|g \|^2 =   \spazio \punti \| g\|^2. 
\end{align*}

The proof of the last item
is the same as the corresponding proof in Theorem~\ref{theo_mio}, 
but using 
\eqref{tailhalfw}
with the random samples of Corollary~\ref{coro_theo_mio}.  
\end{proof}

\section{Adaptive approximation with a nested sequence of spaces }
\label{sec:adaptive}
\noindent
We now apply the results for a given approximation space from the previous section to an arbitrary sequence of nested spaces $(V_{\iteraz})_{\iteraz\geq1} \subset L^2(\setx,\errmes)$, with $V_\iteraz:=\textrm{span}\{\basis_1,\ldots,\basis_{\spazio_\iteraz}\}$ and $\spazio_\iteraz:=\textrm{dim}(V_\iteraz)$. 
Theorem~\ref{theo2} and Theorem~\ref{theo_mio} provide two different approaches to build the set of random samples for a given approximation space, and each one of them can be applied to the adaptive setting. 
Since the samples are adapted to the space, the underlying challenge is how to recycle as much as possible the samples associated to spaces from the previous iterations, 
in order to keep the overall number of generated samples from iteration one to $\iteraz$ of the same order as $\textrm{dim}(V_\iteraz)$, \emph{i.e.}~the same scaling as in the results for an individual approximation space. 

First we briefly discuss the approach using Theorem~\ref{theo_mio}. 
This theorem prescribes the precise number of random samples coming from each component of the mixture \eqref{eq:changemes} associated to the space. 
When the spaces are nested, this trivially allows one to recycle all the samples from all the previous iterations, just by adding the missing samples to the previous ones, as shown in \eqref{eq:decomposition_mes_chi}.
The concrete procedure and the related Algorithm~\ref{algo:sampling_mio_simplif} are explained in Section~\ref{sec:det_seq_samp}.  

The approach using Theorem~\ref{theo2} is not as simple and effective as the previous one. 
Without recycling the samples from the previous iterations, the na{\"i}ve sequential application of Theorem~\ref{theo2} to each space $V_1,\ldots,V_\iterazmax$ would require the generation of an overall number of samples equal to $\sum_{\iteraz=1}^{\iterazmax} \punti_\iteraz$, with $\punti_{\iteraz}$ samples drawn from each $\auxmes_{\spazio_\iteraz}$. 
However, despite $\auxmes_{\spazio_\iteraz}$ changes at each iteration $\iteraz$, it is possible to recycle most, but not all, of the samples from the previous iterations by leveraging the additive structure of $\auxmes_{\spazio_\iteraz}$ as in \eqref{eq:decomposition_mixture}. 
This procedure is described in Section~\ref{sec:seq_random_sampling} together with Algorithm~\ref{algo:sampling_altro}.

The next results are obtained by applying Theorem~\ref{theo_mio} (respectively Theorem~\ref{theo2}) individually for each space $V_\iteraz$ and using a union bound, with the random samples produced by  Algorithm~\ref{algo:sampling_mio_simplif} (respectively Algorithm~\ref{algo:sampling_altro}). 
Here $I_\iteraz\in\mathbb{R}^{\spazio_\iteraz \times \spazio_\iteraz}$ denotes the identity matrix. 
For any $\paramzeta >1$,  $\zeta(\paramzeta)$ denotes the Riemann zeta function. 
The best approximation error \eqref{eq:def_best_approx_l2} of $u$ on the space $V_\iteraz$ is denoted by $e_{\spazio_\iteraz,2}(u)$, and $u_C^\iteraz$ denotes the estimator  
\eqref{eq:def_cond_est} on $V_\iteraz$. 

\begin{theorem}
\label{thm:teo_union_bounds_mio}
Let $\conf \in (0,1)$, $\paramzeta >1$ be real numbers 
and  
$\iterazmax \geq 1$ be an integer.  
Given any nested sequence of spaces $V_1 \subset \ldots \subset V_\iterazmax
\subset L^2(\setx,\errmes)$ with dimensions $\spazio_1  < \ldots < \spazio_\iterazmax$, if 
\begin{equation}
\label{eq:condition_points_zeta_tau}
\punti_\iteraz 
= \intero_\iteraz \spazio_\iteraz, \quad 
\intero_\iteraz:= 
\left\lceil
\costprop^{-1} \ln\left( 
\dfrac{ 
\zeta(\paramzeta) \, 
\spazio_\iteraz^{\paramzeta +1} 
}{ \conf  }
\right)
\right\rceil
, \qquad \iteraz=1,\ldots,\iterazmax,  
\end{equation}
then 
\begin{enumerate}
\item[{\rm (i)}] 
$$
\textrm{Pr}\left( 
\bigcap_{\iteraz=1}^{\iterazmax}
\left\{
\vvvert G_\iteraz - I_\iteraz \vvvert \leq \frac12
\right\}
\right) \geq 1-\conf,  
$$
where $G_\iteraz \in \mathbb{R}^{\spazio_\iteraz \times \spazio_\iteraz}$ is defined element wise as
$$
(G_\iteraz)_{ pq } = 
\punti_\iteraz^{-1} \sum_{\indsample=1}^{\punti_\iteraz}
\basis_p \left( \sample^\indsample \right)
\basis_q \left( \sample^\indsample
\right), 
$$
and $\sample^1,\ldots,\sample^{\punti_\iterazmax}$ is a set of $\punti_\iterazmax$ independent random samples such that for any $\iteraz=1,\ldots,\iterazmax$ and for any $j=1,\ldots,\spazio_\iteraz$  the samples $\sample^{(j-1)\intero_\iteraz +1}, \ldots, \sample^{j \intero_\iteraz}$ are distributed according to $\newmes_j$. The set $\sample^1,\ldots,\sample^{\punti_\iterazmax}$ can be generated by Algorithm~\ref{algo:sampling_mio_simplif}.  

\item[{\rm (ii)}] If $u\in L^2(\setx,\errmes)$ then for any $\iteraz=1,\ldots,\iterazmax$ the estimator $u_C^\iteraz$ satisfies, 
\begin{equation*}
\mathbb{E}(\|u-u_C^\iteraz \|^2)\leq \left(1+ \dfrac{4\costprop}{\ln( \zeta(\paramzeta) \spazio_\iteraz^{\paramzeta+1} /\conf )}  \right) e_{\spazio_\iteraz,2}(u)^2 + \conf \|u\|^2.
\end{equation*}
\end{enumerate}

\end{theorem}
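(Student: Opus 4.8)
The plan is to derive both items by applying Theorem~\ref{theo_mio} to each space $V_\iteraz$ \emph{individually} and, for item (i) only, stitching the $\iterazmax$ resulting tail estimates together with a union bound. Two observations make this work. First, a union bound $\Pr(\bigcup_\iteraz A_\iteraz)\le\sum_\iteraz\Pr(A_\iteraz)$ requires nothing about the joint law of the events $A_\iteraz:=\{\vvvert G_\iteraz-I_\iteraz\vvvert>\frac12\}$; in particular the massive sample sharing across iterations (which makes the $G_\iteraz$ strongly dependent) is harmless, and only the \emph{marginal} distribution of each $G_\iteraz$ matters. Second, by construction each $G_\iteraz$ is assembled from $\intero_\iteraz$ independent draws from $\newmes_j$ for every $j=1,\ldots,\spazio_\iteraz$, which is exactly the sampling hypothesis of Theorem~\ref{theo_mio} for the space $V_\iteraz$. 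I would run the argument with the per-iteration confidence $\conf_\iteraz:=\conf/(\zeta(\paramzeta)\,\spazio_\iteraz^{\paramzeta})$, which is the per-iteration level underlying the choice of $\intero_\iteraz$ in \eqref{eq:condition_points_zeta_tau} through condition \eqref{eq:cond_tau_points}.

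For item (i) I would first record that, thanks to the decomposition \eqref{eq:decomposition_mes_chi} exploited by Algorithm~\ref{algo:sampling_mio_simplif}, recycling samples from earlier iterations does not alter the marginal law of $G_\iteraz$: it still sees $\intero_\iteraz$ i.i.d.\ copies of each $\newmes_j$, $j\le\spazio_\iteraz$. Theorem~\ref{theo_mio}(i) applied to $V_\iteraz$ at level $\conf_\iteraz$ then gives $\Pr(A_\iteraz)\le\conf_\iteraz$, and the union bound yields
\begin{equation*}
\Pr\Big(\bigcup_{\iteraz=1}^{\iterazmax}A_\iteraz\Big)\le\sum_{\iteraz=1}^{\iterazmax}\conf_\iteraz=\frac{\conf}{\zeta(\paramzeta)}\sum_{\iteraz=1}^{\iterazmax}\spazio_\iteraz^{-\paramzeta}\le\frac{\conf}{\zeta(\paramzeta)}\sum_{k\ge1}k^{-\paramzeta}=\conf .
\end{equation*}
The one genuinely combinatorial ingredient is the middle inequality: because $\spazio_1<\cdots<\spazio_\iterazmax$ are strictly increasing positive integers one has $\spazio_\iteraz\ge\iteraz$, so the finite sum is dominated by the full zeta series, and the normalization by $\zeta(\paramzeta)$ built into \eqref{eq:condition_points_zeta_tau} cancels it. Taking complements gives the stated bound.

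For item (ii) no union bound is needed, since the claim is a per-space statement on $\mathbb{E}(\|u-u_C^\iteraz\|^2)$. For each fixed $\iteraz$ I would invoke Theorem~\ref{theo_mio}(ii) on $V_\iteraz$ with the $\intero_\iteraz$ blocks of \eqref{eq:condition_points_zeta_tau} and track the two terms of \eqref{nearoptimalcw}. Since $2\spazio_\iteraz/\conf_\iteraz=2\zeta(\paramzeta)\spazio_\iteraz^{\paramzeta+1}/\conf\ge\zeta(\paramzeta)\spazio_\iteraz^{\paramzeta+1}/\conf$, the leading coefficient is at most $1+4\costprop/\ln(\zeta(\paramzeta)\spazio_\iteraz^{\paramzeta+1}/\conf)$, and the residual term obeys $\conf_\iteraz\|u\|^2\le\conf\|u\|^2$ because $\zeta(\paramzeta)>1$ and $\spazio_\iteraz\ge1$ force $\conf_\iteraz\le\conf$. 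This is exactly the asserted inequality.

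I expect the crux to be item (i)'s reliance on the marginal structure of each $G_\iteraz$ surviving the recycling: one must confirm that the single realisation produced by Algorithm~\ref{algo:sampling_mio_simplif} simultaneously presents, for every $\iteraz$, the correct block structure for $V_\iteraz$ — exactly $\intero_\iteraz$ draws from each $\newmes_j$, $j\le\spazio_\iteraz$ — while the total number of generated samples stays at $\punti_\iterazmax=\intero_\iterazmax\spazio_\iterazmax$, hence linear in $\spazio_\iterazmax$ up to the logarithmic factor $\intero_\iterazmax$. This is precisely the advantage of the structured $\mymes$-samples over the i.i.d.\ mixture samples of Theorem~\ref{theo2}, and it hinges on the monotonicity $\intero_1\le\cdots\le\intero_\iterazmax$ together with \eqref{eq:decomposition_mes_chi}. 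The only other point requiring care is the constant bookkeeping in the two-sided bound \eqref{eq:chernov} used inside Theorem~\ref{theo_mio}(i), which determines the precise additive term in $\intero_\iteraz$ and is absorbed into the choice of $\conf_\iteraz$ and the $\zeta(\paramzeta)$ normalization.
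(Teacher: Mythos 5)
Your proof is correct and essentially identical to the paper's own: it applies Theorem~\ref{theo_mio} to each $V_\iteraz$ individually at level $\conf_\iteraz=\conf/(\zeta(\paramzeta)\,\spazio_\iteraz^{\paramzeta})$, uses strict monotonicity of $(\spazio_\iteraz)_\iteraz$ to get $\spazio_\iteraz\geq\iteraz$ and dominate the finite sum by the full zeta series, concludes (i) by a union bound (the paper invokes Lemma~\ref{thm:teo_algo_red} for the marginal sample structure that you justify via \eqref{eq:decomposition_mes_chi}), and obtains (ii) from $\conf_\iteraz\leq\conf$ together with monotonicity of the logarithmic factor. The only caveat is one you share with the paper itself: \eqref{eq:condition_points_zeta_tau} coincides with condition \eqref{eq:cond_tau_points} at level $2\conf_\iteraz$ rather than $\conf_\iteraz$ (the factor $2$ inside the logarithm is missing), so a literal application of Theorem~\ref{theo_mio} yields per-iteration tails $2\conf_\iteraz$ and overall confidence $1-2\conf$ --- an imprecision inherited from the paper's statement (its Lemma~\ref{thm:lemma_equi_condi} indeed writes the condition with the factor $2$), not a gap introduced by your argument.
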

\begin{proof}
Proof of (i). 
From Lemma~\ref{thm:teo_algo_red}, for any $\iteraz=1,\ldots,\iterazmax$ Algorithm~\ref{algo:sampling_mio_simplif} with $\intero_\iteraz$ as in \eqref{eq:condition_points_zeta_tau} generates a set $\sample^1,\ldots,\sample^{\punti_\iteraz}$ of $\punti_\iteraz$ random samples 
with the required properties.  
By construction, these random samples satisfy the assumptions of Theorem~\ref{theo_mio}, and are used to compute the matrix $G_\iteraz$.
For any $\iteraz=1,\ldots,\iterazmax$, using Theorem~\ref{theo_mio} individually for each $G_\iteraz$ with 
$$
\conf_\iteraz=\dfrac{   \conf  }{ \zeta(\paramzeta) \, \spazio_\iteraz^\paramzeta}, 
$$ 
gives 
$$
\sum_{\iteraz=1}^\iterazmax
\textrm{Pr}\left( 
\left\{
\vvvert G_\iteraz - I_\iteraz \vvvert > \frac12
\right\}
\right) \leq 
 \sum_{\iteraz=1}^\iterazmax \conf_\iteraz
\leq 
\dfrac{\conf}{\zeta(\paramzeta)}
\sum_{\iteraz= 1}^{\iterazmax} \dfrac{1}{\iteraz^\paramzeta}
\leq
\dfrac{\conf}{\zeta(\paramzeta)}
\sum_{\iteraz\geq 1}
\dfrac{1}{\iteraz^\paramzeta}
= \conf, 
$$
where the second inequality follows from strict monotonicity of $(\spazio_\iteraz)_{\iteraz \geq 1}$ and $\spazio_1\geq 1$, that implies  $\spazio_\iteraz \geq \iteraz$. 

Using De Morgan's law and a probability union bound for the matrices $G_1,\ldots,G_\iterazmax$ it holds that 
\begin{align*}
\textrm{Pr}\left( 
\bigcap_{\iteraz=1}^{\iterazmax}
\left\{
\vvvert G_\iteraz - I_\iteraz \vvvert \leq \frac12
\right\}
\right) 
\geq
1- 
\sum_{\iteraz=1}^\iterazmax
\textrm{Pr}\left( 
\left\{
\vvvert G_\iteraz - I_\iteraz \vvvert > \frac12
\right\}
\right)
\geq
 1-\conf. 
\end{align*}

The proof of (ii) 
trivially follows from 
Theorem~\ref{theo_mio}, 
since $\conf_\iteraz \leq \conf$ for any $\iteraz=1,\ldots,\iterazmax$. 
\end{proof}

\begin{theorem}
\label{thm:teo_union_bounds}
Let $\conf \in (0,1)$, $\paramzeta >1$ be real numbers and $\iterazmax \geq 1$ be an integer.  
Given any nested sequence of spaces $V_1\subset  \ldots \subset V_\iterazmax \subset L^2(\setx,\errmes)$ with dimensions $\spazio_1  < \ldots < \spazio_\iterazmax$, if 
\begin{equation}
\label{eq:condition_points_zeta}
\punti_\iteraz 
=
\left\lceil
\dfrac{ \spazio_\iteraz }{\costprop}  
\ln\left( 
\dfrac{ 
\zeta(\paramzeta) \, 
\spazio_\iteraz^{\paramzeta +1} 
}{ \conf  }
\right)
\right\rceil
, \qquad \iteraz=1,\ldots,\iterazmax,  
\end{equation}
then 
\begin{enumerate}
\item[{\rm (i)}] 
$$
\textrm{Pr}\left( 
\bigcap_{\iteraz=1}^{\iterazmax}
\left\{
\vvvert G_\iteraz - I_\iteraz \vvvert \leq \frac12
\right\}
\right) \geq 1-\conf,  
$$
where $G_\iteraz \in \mathbb{R}^{\spazio_\iteraz \times \spazio_\iteraz}$ is defined element wise as
$$
(G_\iteraz)_{ pq } = 
\punti_\iteraz^{-1} \sum_{\indsample=1}^{\punti_\iteraz}
\basis_p \left( \sample^\indsample \right)
\basis_q \left( \sample^\indsample
\right), 
$$
and 
$\sample^1,\ldots,\sample^{\punti_\iterazmax}$ 
is a set 
of 
$\punti_\iterazmax$ independent random samples 
such that 
for any $\iteraz=1,\ldots,\iterazmax$   
the $\sample^1,\ldots,\sample^{\punti_\iteraz}$  
are distributed according to $\auxmes_{\spazio_\iteraz}$.  
The set $\sample^1,\ldots,\sample^{\punti_\iterazmax}$ 
 can be generated by Algorithm~\ref{algo:sampling_altro} using 
an overall number of random samples given by  
the random variable $\tilde\punti_\iterazmax$ in \eqref{eq:def_tot_numb_samples}. 

\item[{\rm (ii)}] If $u\in L^2(\setx,\errmes)$ then
for any $\iteraz=1,\ldots,\iterazmax$ 
the estimator $u_C^\iteraz$ satisfies, 
\begin{equation}
\mathbb{E}(\|u-u_C^\iteraz \|^2)\leq 
\left(
1+
\dfrac{4\costprop}{ \ln( \zeta(\paramzeta) \spazio_\iteraz^{\paramzeta+1} / \conf ) }
\right)e_{\spazio_\iteraz,2}(u)^2+
\conf
\|u\|^2.  
\label{nearoptimalcw_k_proba}
\end{equation}
\end{enumerate}
\end{theorem}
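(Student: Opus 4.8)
The plan is to follow the same blueprint as the proof of Theorem~\ref{thm:teo_union_bounds_mio}, only substituting the i.i.d.\ sampling from $\auxmes_{\spazio_\iteraz}$ for the block-structured $\mymes$-sampling, and invoking Corollary~\ref{coro_theo_mio} in its degenerate case $\intero=0$ (where all $\punti_\iteraz$ samples are identically distributed as $\auxmes_{\spazio_\iteraz}$) in place of Theorem~\ref{theo_mio}. This is the right tool because, with $\intero=0$, Corollary~\ref{coro_theo_mio} delivers simultaneously the tail bound \eqref{tailhalfw} and the expectation bound \eqref{nearoptimalcw} in exactly the clean form that appears in the statement. The first step is to invoke the properties of Algorithm~\ref{algo:sampling_altro} established in Section~\ref{sec:seq_random_sampling}, which guarantee that for every $\iteraz=1,\ldots,\iterazmax$ the samples $\sample^1,\ldots,\sample^{\punti_\iteraz}$ forming $G_\iteraz$ are $\punti_\iteraz$ independent draws distributed as $\auxmes_{\spazio_\iteraz}$ and that the overall number of generated samples equals the random variable $\tilde\punti_\iterazmax$ of \eqref{eq:def_tot_numb_samples}; this places us, at each iteration separately, under the hypotheses of Corollary~\ref{coro_theo_mio} with $\intero=0$.

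For part~(i), I would set the per-iteration confidence $\conf_\iteraz:=\conf/(\zeta(\paramzeta)\,\spazio_\iteraz^{\paramzeta})$ and note that the choice \eqref{eq:condition_points_zeta} makes $\punti_\iteraz$ meet the requirement \eqref{eq:cond_alpha_k} at confidence $\conf_\iteraz$, so that applying item~(i) of Corollary~\ref{coro_theo_mio} to each $G_\iteraz$ yields $\mathrm{Pr}(\vvvert G_\iteraz-I_\iteraz\vvvert>\tfrac12)\leq\conf_\iteraz$. Summing over $\iteraz$ and using that the strict monotonicity $\spazio_1<\cdots<\spazio_\iterazmax$ with $\spazio_1\geq1$ forces $\spazio_\iteraz\geq\iteraz$, I would bound $\sum_{\iteraz=1}^{\iterazmax}\conf_\iteraz\leq\frac{\conf}{\zeta(\paramzeta)}\sum_{\iteraz\geq1}\iteraz^{-\paramzeta}=\conf$. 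A De~Morgan identity together with the probability union bound over $G_1,\ldots,G_\iterazmax$ then turns this into the asserted lower bound $1-\conf$ for the simultaneous event $\bigcap_{\iteraz}\{\vvvert G_\iteraz-I_\iteraz\vvvert\leq\tfrac12\}$. Part~(ii) follows immediately by applying item~(ii) of Corollary~\ref{coro_theo_mio} to each $V_\iteraz$ at confidence $\conf_\iteraz$ and observing that $\conf_\iteraz\leq\conf$, so the additive term $\conf_\iteraz\|u\|^2$ is dominated by $\conf\|u\|^2$ while the multiplicative factor is exactly the one displayed in \eqref{nearoptimalcw_k_proba}.

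The step I expect to be the main obstacle is the first one, namely certifying the distributional correctness of Algorithm~\ref{algo:sampling_altro}. In contrast with the deterministic, block-wise recycling behind Theorem~\ref{thm:teo_union_bounds_mio}, here the recycling across iterations is genuinely random: by the mixture decomposition \eqref{eq:decomposition_mixture}, the number of samples inherited from the old components of $\auxmes_{\spazio_\iteraz}$ is binomially distributed, and with positive probability it falls short of $\punti_{\iteraz-1}$, so not all previous samples can always be reused. The delicate point is to show that, notwithstanding this randomized recycling, the samples delivered at iteration $\iteraz$ are still mutually independent and each marginally distributed as $\auxmes_{\spazio_\iteraz}$, which is exactly the marginal required to apply Corollary~\ref{coro_theo_mio}; this is precisely what the analysis of Algorithm~\ref{algo:sampling_altro} in Section~\ref{sec:seq_random_sampling} is designed to secure, and the union-bound argument above rests entirely on it.
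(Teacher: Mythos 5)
Your proposal is correct and follows essentially the same route as the paper's proof: invoke Lemma~\ref{thm:teo_algo_pure_random} to certify that Algorithm~\ref{algo:sampling_altro} delivers, at every iteration $\iteraz$, independent samples distributed as $\auxmes_{\spazio_\iteraz}$ with total count $\tilde\punti_\iterazmax$, then apply a per-space result with confidence $\conf_\iteraz=\conf/(\zeta(\paramzeta)\,\spazio_\iteraz^{\paramzeta})$, use $\spazio_\iteraz\geq\iteraz$ to sum the $\conf_\iteraz$ against the Riemann zeta function, and conclude by De~Morgan plus a union bound, exactly as in Theorem~\ref{thm:teo_union_bounds_mio}. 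The only difference is that you cite Corollary~\ref{coro_theo_mio} with $\intero=0$ where the paper cites Corollary~\ref{thm:coro_alpha}; the two coincide for i.i.d.\ samples from $\auxmes_{\spazio_\iteraz}$, and your choice is if anything the more convenient one for item (ii), since Corollary~\ref{thm:coro_alpha} states only the tail bound while Corollary~\ref{coro_theo_mio} also supplies the expectation estimate \eqref{nearoptimalcw} in precisely the form required by \eqref{nearoptimalcw_k_proba}.
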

\begin{proof}
From Lemma~\ref{thm:teo_algo_pure_random}, Algorithm~\ref{algo:sampling_altro} 
generates a set $\sample^1,\ldots,\sample^{\punti_\iterazmax}$
of $\punti_\iterazmax$ random samples 
with the required properties. 
The rest of the proof 
of item (i) and (ii) 
follows the proof of Theorem~\ref{thm:teo_union_bounds_mio}, 
but applying Corollary~\ref{thm:coro_alpha} individually to each $G_\iteraz$, rather than Theorem~\ref{theo_mio}.  
\end{proof}

Condition \eqref{eq:condition_points_zeta_tau} ensures that $\punti_\iteraz$ is an integer multiple of $\spazio_\iteraz$ for any $\iteraz \geq 1$.
This condition requires a number of points $\punti_\iteraz$ only slightly larger than condition  \eqref{eq:condition_points_zeta} for the same values of $\spazio_\iteraz$, $\paramzeta$ and $\conf$ 
(the proof is postponed to the forthcoming       
Lemma~\ref{thm:lemma_equi_condi}).
However, to compare the effective number of samples used
in Theorem~\ref{thm:teo_union_bounds_mio} and Theorem~\ref{thm:teo_union_bounds} 
one cannot just compare \eqref{eq:condition_points_zeta_tau}
and \eqref{eq:condition_points_zeta}, 
because \eqref{eq:condition_points_zeta} neglects the random samples that 
have not been recycled from all the previous iterations. 
This issue is discussed in Remark~\ref{thm:remark_alghi}. 

For convenience in Remark~\ref{thm:remark_alghi}, Lemma~\ref{thm:lemma_equi_condi} and Remark~\ref{thm:surplus} we denote with $\hat \punti_\iteraz$ the number of points required by condition  
\eqref{eq:condition_points_zeta_tau},
and with $\punti_\iteraz$ the number of points required by \eqref{eq:condition_points_zeta}, for the same values of $\spazio_\iteraz$, $\paramzeta$ and $\conf$.

\begin{remark}
\label{thm:remark_alghi}
For any $\iterazmax \geq 1$, 
in Theorem~\ref{thm:teo_union_bounds_mio} 
(Theorem~\ref{thm:teo_union_bounds}) 
the set $\sample^1,\ldots,\sample^{\hat \punti_\iterazmax}$ 
($\sample^1,\ldots,\sample^{\punti_\iterazmax}$)
of random samples can be generated by Algorithm~\ref{algo:sampling_mio_simplif} (Algorithm~\ref{algo:sampling_altro}). 
In Theorem~\ref{thm:teo_union_bounds}, the generation of 
the $\punti_\iterazmax$ random samples 
requires Algorithm~\ref{algo:sampling_altro} 
to produce an overall number $\tilde\punti_\iterazmax$ 
of random samples, 
with 
$\tilde\punti_\iterazmax$ being 
the random variable 
defined in \eqref{eq:def_tot_numb_samples}. 
Among the $\tilde\punti_\iterazmax$ random samples (not necessarily distributed as $\auxmes_{\spazio_\iterazmax}$) only $\punti_\iterazmax$ 
are retained,  and the remaining $\tilde\punti_\iterazmax-\punti_\iterazmax$ are discarded.
Condition \eqref{eq:condition_points_zeta} does not take into account the $\tilde\punti_\iterazmax-\punti_\iterazmax$ discarded samples.
As a consequence, when comparing the effective number of samples required 
in Theorem~\ref{thm:teo_union_bounds_mio} and Theorem~\ref{thm:teo_union_bounds}, 
one should compare $\hat \punti_\iterazmax$  
with $\tilde\punti_\iterazmax$, and not $\hat \punti_\iterazmax$ with $\punti_\iterazmax$.

It can be shown that $\tilde\punti_\iterazmax-\punti_\iterazmax $ remains small with large probability,   
if $\punti_\iteraz$ 
satisfies \eqref{eq:condition_points_zeta} for all $\iteraz=1,\ldots,\iterazmax$.  
More precisely, 
using the upper bound $\tilde\punti_\iterazmax \leq \punti_\iterazmax +  \Gaus_\iterazmax$ with $\Gaus_\iterazmax$ defined in \eqref{eq:def_rv_gauss}, Lemma~\ref{thm:lemma_expectation_gauss_t}, the last inequality in \eqref{eq:thesis_lemma_bounds} 
and Remark~\ref{thm:gaussian_binomial_approx}, 
it can be shown that $\tilde\punti_\iterazmax$ is upper bounded by a random variable with mean $(2+\costprop) \punti_\iterazmax$ and variance $(1+\costprop) \punti_\iterazmax$ that exhibits Gaussian concentration.  
\end{remark}

\begin{lemma}
\label{thm:lemma_equi_condi}
For any $\iteraz \geq 1$ it holds that 
\begin{equation}
\label{eq:thesis_lemma_bounds}
\punti_\iteraz 
\leq 
\hat \punti_\iteraz 
\leq 
\punti_\iteraz + \spazio_\iteraz-1
\leq 
\punti_\iteraz \left( 1 + \epsilon_\iteraz  \right) -1,
\end{equation}
where  
$$
\epsilon_\iteraz := 
\costprop
\left(  \log \dfrac{ 2\spazio_\iteraz^{\paramzeta +1} \zeta(\paramzeta) }{ \conf  } \right)^{-1}
 \ll 1. 
$$
\end{lemma}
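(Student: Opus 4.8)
The plan is to reduce everything in the chain \eqref{eq:thesis_lemma_bounds} to a comparison between the two ceilings appearing in \eqref{eq:condition_points_zeta} and \eqref{eq:condition_points_zeta_tau}. Set $L_\iteraz := \costprop^{-1}\ln\bigl(\zeta(\paramzeta)\,\spazio_\iteraz^{\paramzeta+1}/\conf\bigr)$, which is the common positive real quantity inside both conditions (positivity follows from $\paramzeta>1$, hence $\zeta(\paramzeta)>1$, together with $\conf<1$ and $\spazio_\iteraz\ge1$). Then \eqref{eq:condition_points_zeta_tau} reads $\hat\punti_\iteraz=\intero_\iteraz\spazio_\iteraz=\spazio_\iteraz\lceil L_\iteraz\rceil$, whereas \eqref{eq:condition_points_zeta} reads $\punti_\iteraz=\lceil\spazio_\iteraz L_\iteraz\rceil$. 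The lemma is therefore the statement that $\lceil\spazio_\iteraz L_\iteraz\rceil\le\spazio_\iteraz\lceil L_\iteraz\rceil\le\lceil\spazio_\iteraz L_\iteraz\rceil+\spazio_\iteraz-1$, together with the final relative bound.

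For the first inequality $\punti_\iteraz\le\hat\punti_\iteraz$ I would use $L_\iteraz\le\lceil L_\iteraz\rceil$, so $\spazio_\iteraz L_\iteraz\le\spazio_\iteraz\lceil L_\iteraz\rceil$; as the right-hand side is an integer, taking the ceiling of the left-hand side preserves the inequality, giving $\lceil\spazio_\iteraz L_\iteraz\rceil\le\spazio_\iteraz\lceil L_\iteraz\rceil$. For the second inequality $\hat\punti_\iteraz\le\punti_\iteraz+\spazio_\iteraz-1$ I would start from the strict bound $\lceil L_\iteraz\rceil<L_\iteraz+1$, multiply by the positive integer $\spazio_\iteraz$, and use $\spazio_\iteraz L_\iteraz\le\lceil\spazio_\iteraz L_\iteraz\rceil$ to obtain $\spazio_\iteraz\lceil L_\iteraz\rceil<\lceil\spazio_\iteraz L_\iteraz\rceil+\spazio_\iteraz$; since both sides are integers this strict inequality upgrades to $\spazio_\iteraz\lceil L_\iteraz\rceil\le\punti_\iteraz+\spazio_\iteraz-1$. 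Both steps use only monotonicity and integrality of the ceiling and are routine.

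The substance of the lemma is the last inequality $\punti_\iteraz+\spazio_\iteraz-1\le\punti_\iteraz(1+\epsilon_\iteraz)-1$. Cancelling $\punti_\iteraz-1$ from both sides shows it is equivalent to the single estimate $\spazio_\iteraz\le\punti_\iteraz\,\epsilon_\iteraz$. Here I would drop the ceiling in \eqref{eq:condition_points_zeta} and exploit the clean lower bound $\punti_\iteraz\ge\spazio_\iteraz L_\iteraz$, which reduces the claim to verifying $L_\iteraz\,\epsilon_\iteraz\ge1$, i.e. that $\epsilon_\iteraz$ is essentially the reciprocal of $L_\iteraz=\costprop^{-1}\ln(\cdots)$. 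The bound $\epsilon_\iteraz\ll1$ stated at the end is then automatic, since the denominator of $\epsilon_\iteraz$ grows like $\ln(\spazio_\iteraz^{\paramzeta+1})$ and tends to infinity with $\spazio_\iteraz$ (or as $\conf\to0$).

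I expect this last check to be the main obstacle, because it is entirely a matter of reconciling the logarithmic arguments: the lower bound on $\punti_\iteraz$ carries $\ln(\zeta(\paramzeta)\spazio_\iteraz^{\paramzeta+1}/\conf)$, while $\epsilon_\iteraz$ is written with $\log(2\zeta(\paramzeta)\spazio_\iteraz^{\paramzeta+1}/\conf)$. The delicate point is to ensure the constant inside the logarithm of $\epsilon_\iteraz$ is compatible with the one inherited from \eqref{eq:condition_points_zeta}, so that $\costprop^{-1}\ln(\cdots)\cdot\epsilon_\iteraz\ge1$; careful tracking of that constant (and of the $\ln$ versus $\log$ convention) is what makes $\spazio_\iteraz\le\punti_\iteraz\epsilon_\iteraz$ hold and closes the chain.
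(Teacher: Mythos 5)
Your proposal follows exactly the paper's route: all three inequalities in \eqref{eq:thesis_lemma_bounds} are obtained by elementary ceiling manipulations, and your proofs of the first two (monotonicity plus integrality for $\lceil\spazio_\iteraz L_\iteraz\rceil\le\spazio_\iteraz\lceil L_\iteraz\rceil$, and the strict bound $\lceil L_\iteraz\rceil<L_\iteraz+1$ upgraded by integrality for the second) are precisely the paper's. Your reduction of the last inequality to $\spazio_\iteraz\le\punti_\iteraz\epsilon_\iteraz$, and then via $\punti_\iteraz\ge\spazio_\iteraz L_\iteraz$ to $L_\iteraz\epsilon_\iteraz\ge 1$, is also the paper's argument.

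The factor-of-$2$ mismatch you flag at the end is real, but the resolution is not a finer estimate: it is an inconsistency of constants in the paper itself. The paper's own proof of the lemma writes both $\punti_\iteraz$ and $\hat\punti_\iteraz$ with the logarithm $\log\bigl(2\zeta(\paramzeta)\spazio_\iteraz^{\paramzeta+1}/\conf\bigr)$, i.e.\ the \emph{same} argument that appears in $\epsilon_\iteraz$; this is also what one actually obtains by applying Theorem~\ref{theo_mio} (resp.\ Corollary~\ref{thm:coro_alpha}) with confidence $\conf_\iteraz=\conf/(\zeta(\paramzeta)\spazio_\iteraz^{\paramzeta})$, since those results carry $2\spazio$ inside their logarithms; the printed conditions \eqref{eq:condition_points_zeta_tau} and \eqref{eq:condition_points_zeta} simply dropped the $2$. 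Once the same logarithm is used in $L_\iteraz$ and in $\epsilon_\iteraz$, one has $L_\iteraz\epsilon_\iteraz=1$ exactly, hence $\punti_\iteraz\epsilon_\iteraz\ge\spazio_\iteraz L_\iteraz\epsilon_\iteraz=\spazio_\iteraz$, and your chain closes. Your caution was warranted: with the literal printed constants ($L_\iteraz$ without the $2$, $\epsilon_\iteraz$ with it) the inequality $\spazio_\iteraz\le\punti_\iteraz\epsilon_\iteraz$ would in fact fail, because $\punti_\iteraz\le\spazio_\iteraz L_\iteraz+1<\spazio_\iteraz\bigl(L_\iteraz+\costprop^{-1}\ln 2\bigr)=\spazio_\iteraz/\epsilon_\iteraz$ (as $\costprop^{-1}\ln 2\approx 6.4>1$). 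So the only thing separating your write-up from a complete proof is recognizing that the two logarithm arguments must be taken equal, as the paper's proof implicitly does.
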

\begin{proof}
For any 
$\spazio_\iteraz \geq 1$ it holds  
\begin{align*}
\punti_\iteraz 
= 
\left\lceil  
\spazio_\iteraz
\costprop^{-1} \log \dfrac{ 2\spazio_\iteraz^{\paramzeta +1} \zeta(\paramzeta) }{ \conf  } \right\rceil
\leq 
\spazio_\iteraz
\left\lceil \costprop^{-1} \log \dfrac{ 2\spazio_\iteraz^{\paramzeta +1} \zeta(\paramzeta) }{ \conf  } \right\rceil
=
\hat \punti_\iteraz
<
\spazio_\iteraz
\left(  \costprop^{-1} \log \dfrac{ 2\spazio_\iteraz^{\paramzeta +1} \zeta(\paramzeta) }{ \conf  } +1 \right).
\end{align*}
The first inequality above proves the first inequality in \eqref{eq:thesis_lemma_bounds}. 
The rightmost strict inequality above and \eqref{eq:condition_points_zeta} prove 
the second (large) inequality in \eqref{eq:thesis_lemma_bounds}. 
The last inequality in \eqref{eq:thesis_lemma_bounds} is obtained by using once again \eqref{eq:condition_points_zeta} together with properties of the ceiling operator. 
Notice that $\costprop \approx 0.108$.
\end{proof}

\begin{remark}
\label{thm:surplus}
The small number $\hat\punti_\iteraz-\punti_\iteraz<\spazio_\iteraz$ of additional samples required by \eqref{eq:condition_points_zeta_tau} contribute to further improve the stability of $u_W$. 
This slight surplus of samples is completely negligible from a fully adaptive point of view, where at each iteration $\iteraz$ conditions \eqref{eq:condition_points_zeta_tau} or \eqref{eq:condition_points_zeta} are not necessarily fulfilled, 
but, more simply, new random samples are just added to the previous ones until a certain stability criterion is met, for example until $\vvvert G_\iteraz - I_\iteraz \vvvert \leq \thresholdcond_\iteraz$ or $\textrm{cond}(G_\iteraz)\leq \thresholdcond_\iteraz$ for some threshold $\thresholdcond_\iteraz$ eventually depending on $\iteraz$.   
\end{remark}

\begin{remark}
\label{thm:remark_improv_mix}
For any integer $\iterazmax\geq 1$ and reals $\conf \in (0,1)$, $\paramzeta >1$, 
the result in 
Theorem~\ref{thm:teo_union_bounds_mio} can be sharpened by using $\punti_\iterazmax$ random samples $\sample^1,\ldots,\sample^{\punti_\iterazmax}$ 
such that, with the same $\intero_\iteraz$ as in \eqref{eq:condition_points_zeta_tau}, 
\begin{itemize}
\item 
for any $\iteraz=1,\ldots,\iterazmax-1$ and for any $j=1,\ldots,\spazio_\iteraz$ the  $\sample^{(j-1)\intero_\iteraz +1}, \ldots, \sample^{j \intero_\iteraz}$ are distributed according to $\newmes_j$, 
\item 
at iteration $\iterazmax$, 
for any $j=1,\ldots,\spazio_\iterazmax$ the samples $\sample^{(j-1)\intero_{\iterazmax-1} +1}, \ldots, \sample^{j \intero_{\iterazmax-1}}$ are distributed according to $\newmes_j$, and 
 the samples $\sample^{\spazio_{\iterazmax}\intero_{\iterazmax-1} +1}, \ldots, \sample^{\punti_\iterazmax }$ are distributed according to $\auxmes_{\spazio_\iterazmax}$. 
\end{itemize}
For any $\iteraz=1,\ldots,\iterazmax-1$ 
the set $\sample^1,\ldots,\sample^{\punti_\iteraz}$ 
can be incrementally constructed using \eqref{eq:decomposition_mes_chi}. 
For the construction of $\sample^1,\ldots,\sample^{\punti_\iterazmax}$ 
at the last iteration $\iterazmax$:  
we recycle all the $\punti_{\iterazmax-1} = \spazio_{\iterazmax-1} \intero_{\iterazmax-1}$ samples from 
iteration $\iterazmax-1$, then we add $(\spazio_\iterazmax - \spazio_{\iterazmax -1} ) \intero_{\iterazmax-1}$ new samples, \emph{i.e.} $\intero_{\iterazmax-1}$ samples from $\newmes_{j}$ for all $j=\spazio_{\iterazmax-1}+1,\ldots,\spazio_\iterazmax$, and then add the remaining $\punti_\iterazmax - \spazio_\iterazmax \intero_{\iterazmax-1} $ samples from $\auxmes_{\spazio_\iterazmax}$.

Using the random samples 
$\sample^1,\ldots,\sample^{\punti_\iterazmax}$, 
in the proof of Theorem~\ref{thm:teo_union_bounds_mio}
we can apply Theorem~\ref{theo_mio} at the first $\iterazmax-1$ iterations, and then at iteration $\iterazmax$
apply Corollary~\ref{coro_theo_mio} with 
$\spazio=\spazio_{\iterazmax}$, 
$\punti=\punti_\iterazmax$, 
$\intero = \intero_{\iterazmax-1}$.   
This proves the same conclusions of Theorem~\ref{thm:teo_union_bounds_mio} under 
the same condition \eqref{eq:condition_points_zeta_tau} on $\punti_\iteraz$ for $\iteraz=1,\ldots,\iterazmax-1$, 
but with the slightly better condition  
\eqref{eq:condition_points_zeta}
 on $\punti_\iterazmax$, because $\punti_\iterazmax$ need not be an integer multiple of $\spazio_\iterazmax$. 
\end{remark}

\begin{remark}
Remark~\ref{thm:remark_improv_mix} can be used to develop adaptive algorithms 
that recycle all the samples from all the previous iterations, and that use a number of samples  
$\punti_\iterazmax$ given by \eqref{eq:condition_points_zeta} at the last iteration $\iterazmax$.  
Such adaptive algorithms need to detect in advance which is last iteration, in contrast to algorithms developed using Theorem~\ref{thm:teo_union_bounds_mio} where this information is not needed.  
\end{remark}

\section{Sampling algorithms}
\noindent
In the following we present two sequential algorithms that generate the random samples required by Theorem~\ref{thm:teo_union_bounds_mio} or Theorem~\ref{thm:teo_union_bounds} at any iteration say $\iterazmax$, while recyclying the samples from the previous iterations $\iteraz=1,\ldots, \iterazmax-1$. 
The algorithms are described in appendix, using the convention that a loop 
\textbf{for} $i=start$ \textbf{to} $end$ 
on the variable say $i$
is not executed  if $end<start$. 

\subsection{Deterministic sequential sampling}
\label{sec:det_seq_samp}
This section presents Algorithm~\ref{algo:sampling_mio_simplif}, that can be used to produce the random samples required by Theorem~\ref{thm:teo_union_bounds_mio} using the decomposition \eqref{eq:decomposition_mes_chi}. 
By construction, Algorithm~\ref{algo:sampling_mio_simplif} recycles all the samples from all the previous iterations. 
At any iteration $\iterazmax \geq 1$ the algorithm stores the $\punti_\iterazmax=\intero_\iterazmax \spazio_\iterazmax$ random samples in a $\spazio_\iterazmax\times \intero_\iterazmax$ matrix. 
All the elements of this matrix are modified only once as the algorithm runs from iteration one to $\iterazmax$. 
The algorithm works with any nondecreasing positive integer sequence $(\intero_\iteraz)_{\iteraz\geq 1}$. 

\begin{lemma}
\label{thm:teo_algo_red}
Let $(V_\iteraz)_{\iteraz \geq 1}$ be any sequence of nested  spaces with dimension $\spazio_\iteraz=\textrm{dim}(V_\iteraz)$, and $(\intero_\iteraz)_{\iteraz\geq 1}$ be a positive nondecreasing integer sequence. For any $\iterazmax\geq 2$, Algorithm~\ref{algo:sampling_mio_simplif} generates a set of $\punti_\iterazmax=\intero_\iterazmax \spazio_\iterazmax$ random samples $\sample^1,\ldots,\sample^{\punti_\iterazmax}$ with the property that for any $\iteraz=1,\ldots,\iterazmax$ and for any $j=1,\ldots,\spazio_\iteraz$ the samples $\sample^{(j-1)\intero_\iteraz +1}, \ldots, \sample^{j \intero_\iteraz}$ are distributed according to $\newmes_j$. 
\end{lemma}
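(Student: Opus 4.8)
The plan is to prove the correctness of Algorithm~\ref{algo:sampling_mio_simplif} by induction on the iteration index, tracking each sample through the $\spazio_\iterazmax \times \intero_\iterazmax$ matrix data structure that the algorithm maintains, rather than through its linear label $\sample^\ell$. The point of working with the matrix is that the linear index $(j-1)\intero_\iteraz + k$ attached to a given stored sample depends on $\iteraz$ through $\intero_\iteraz$, and so changes from one iteration to the next, whereas the distribution of the sample itself does not. Accordingly, I would have the algorithm store in matrix entry $(j,k)$ an independent draw from $\newmes_j$, and I would phrase the inductive invariant entirely in terms of matrix entries.

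First I would state the invariant: after the algorithm has executed iterations $1,\ldots,\iteraz$, every entry $(j,k)$ with $1 \leq j \leq \spazio_\iteraz$ and $1 \leq k \leq \intero_\iteraz$ has been written exactly once with a draw from $\newmes_j$, and all such draws are mutually independent. The base case $\iteraz=1$ is immediate, since the algorithm fills the $\spazio_1 \times \intero_1$ block by drawing, for each $j=1,\ldots,\spazio_1$, a fresh batch of $\intero_1$ independent samples from $\newmes_j$. For the inductive step I would appeal to the decomposition \eqref{eq:decomposition_mes_chi}: passing from iteration $\iteraz-1$ to $\iteraz$, the newly filled region $\{(j,k): 1\leq j\leq \spazio_\iteraz,\ 1\leq k\leq \intero_\iteraz\} \setminus \{(j,k): 1\leq j\leq \spazio_{\iteraz-1},\ 1\leq k\leq \intero_{\iteraz-1}\}$ splits into the rectangle of old rows with the new columns $k=\intero_{\iteraz-1}+1,\ldots,\intero_\iteraz$ (the middle factor of \eqref{eq:decomposition_mes_chi}) and the block of new rows $j=\spazio_{\iteraz-1}+1,\ldots,\spazio_\iteraz$ over all columns (the rightmost factor). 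The algorithm fills each such entry $(j,k)$ with a fresh independent draw from $\newmes_j$ and never overwrites a previous entry, so the old block retains the distributions and independence from the inductive hypothesis; here the hypothesis that $(\intero_\iteraz)_\iteraz$ is nondecreasing is what guarantees the new columns extend, rather than truncate, the old ones.

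Once the invariant is established, the conclusion reduces to matching the matrix layout to the linear enumeration. For any fixed $\iteraz \leq \iterazmax$, reading the active $\spazio_\iteraz \times \intero_\iteraz$ submatrix row by row assigns to entry $(j,k)$ the index $(j-1)\intero_\iteraz + k$; hence $\sample^{(j-1)\intero_\iteraz+1}, \ldots, \sample^{j\intero_\iteraz}$ is exactly the $j$-th row, which by the invariant consists of $\intero_\iteraz$ independent draws from $\newmes_j$. Since this holds for every $j=1,\ldots,\spazio_\iteraz$ and every $\iteraz=1,\ldots,\iterazmax$ at once, the claimed property follows, and mutual independence of the whole set $\sample^1,\ldots,\sample^{\punti_\iterazmax}$ is inherited from the invariant.

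I expect the only real obstacle to be bookkeeping: making the correspondence between the iteration-dependent index $(j-1)\intero_\iteraz + k$ and the fixed storage cell $(j,k)$ precise enough that the distributional property is seen to hold simultaneously for all $\iteraz$, not merely for $\iterazmax$. The matrix formulation neutralizes this, because the distribution attached to a cell is manifestly independent of whichever enumeration one imposes on it; the only genuine hypotheses used are that $(\intero_\iteraz)_\iteraz$ is nondecreasing and that each cell is written only once.
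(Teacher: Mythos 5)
Your proof is correct and follows essentially the same route as the paper's own argument: track the samples through the $\spazio_\iteraz \times \intero_\iteraz$ matrix, observe that at each iteration only the two new blocks (old rows with new columns, new rows with all columns) are written and nothing is overwritten, and then identify each row $j$ with the block $\sample^{(j-1)\intero_\iteraz+1},\ldots,\sample^{j\intero_\iteraz}$ via the transposition-plus-vectorization. The paper's proof leaves the induction implicit while you state the invariant explicitly, but the content is the same.
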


\begin{proof}
At any iteration $\iteraz=1,\ldots,\iterazmax$ 
Algorithm~\ref{algo:sampling_mio_simplif} produces the matrix $\{ \sample^{\indsample \ell}, \, \indsample=1,\ldots,\spazio_\iteraz, \, \ell=1,\ldots, \intero_\iteraz\}$ that contains the $\intero_\iteraz \spazio_\iteraz = \punti_\iteraz$ random samples, by modifying 
only the elements 
$\{ \sample^{\indsample \ell}, \, \indsample=1,\ldots,\spazio_{\iteraz-1}, \, \ell=1+\intero_{\iteraz-1},\ldots, \intero_\iteraz\}$ and 
$\{ \sample^{\indsample \ell}, \, \indsample=1+\spazio_{\iteraz-1},\ldots,\spazio_\iteraz, \, \ell=1,\ldots, \intero_\iteraz\}$.  
By construction, for any $j=1,\ldots,\spazio_\iteraz$, the $j$th row of this matrix contains $\intero_\iteraz$ samples distributed as 
$\newmes_j$. This matrix is recasted into a column vector by means of a transposition composed with a vectorization, that piles up its rows into the vector $(\sample^1,\ldots,\sample^{\punti_\iteraz})^\top$. 
\end{proof}

When $\errmes$ is a product measure on $\setx$, random samples from all the 
$\newmes_j$ appearing in Algorithm~\ref{algo:sampling_mio_simplif} can be efficiently drawn by using the algorithms proposed in \cite{CM2016}, \emph{i.e.}~\emph{inverse transform sampling} or \emph{rejection sampling}. The computational cost required by these algorithms scales linearly with respect to $d$ and to the desired number of samples.

\subsection{Random sequential sampling}
\label{sec:seq_random_sampling}
This section presents Algorithm~\ref{algo:sampling_altro}, 
that can be used to produce the random samples 
required by Theorem~\ref{thm:teo_union_bounds}, and uses the decomposition \eqref{eq:decomposition_mixture}.
For any $\iteraz \geq 2$, a standard algorithm for generating $\punti_\iteraz$ random samples from 
$\auxmes_{\spazio_{\iteraz}}$
uses a binomial random variable 
$\binrv_\iteraz \sim \Bin\left(\punti_{\iteraz}, (\spazio_{\iteraz}-\spazio_{\iteraz-1})/\spazio_\iteraz \right)$ 
to determine the proportion of samples coming from $\sigma_{\spazio_\iteraz}$.
The first parameter of $\binrv_\iteraz$ is the number of trials, and the second parameter is the probability of success for each trial, that is given by the coefficient multiplying $d\sigma_{\spazio_\iteraz}$ in \eqref{eq:decomposition_mixture}. 
For any $\iteraz\neq\iteraz^\prime$, $\binrv_{\iteraz}$ and $\binrv_{\iteraz^\prime}$ are mutually independent.   
The amount of samples associated to $\auxmes_{\spazio_{\iteraz-1}}$ is equal to $\punti_\iteraz - \binrv_\iteraz$. 
These are the samples that the algorithm can recycle from the previous iterations, whenever necessary. 
For any $\iterazmax\geq 1$, the algorithm that generates random samples from $\auxmes_{\spazio_1},\ldots,\auxmes_{\spazio_\iterazmax}$ in a sequential manner is described in Algorithm~\ref{algo:sampling_altro}. 
Efficient algorithms have been proposed in \cite{CM2016} for drawing samples from all the probability measures $\auxmes_{\spazio_\iteraz}$ and $\sigma_{\spazio_\iteraz}$ appearing in Algorithm~\ref{algo:sampling_altro}. 
The next lemma quantifies more precisely how many unrecycled samples cumulate after say $\iterazmax$ iterations. 

\begin{lemma}
\label{thm:teo_algo_pure_random}
For any $\iterazmax\geq 1$, Algorithm~\ref{algo:sampling_altro} generates a set of $\punti_\iterazmax$ random samples $\sample^1,\ldots,\sample^{\punti_\iterazmax}$ with the property that $\sample^1,\ldots,\sample^{\punti_\iteraz}$ are distributed according to $\auxmes_{\spazio_\iteraz}$, for any $\iteraz=1,\ldots,\iterazmax$. 
The overall number of samples generated by Algorithm~\ref{algo:sampling_altro} at iteration $\iterazmax$ is  
\begin{equation}
\tilde\punti_\iterazmax := \punti_1 + \sum_{\iteraz=2}^\iterazmax 
\left( 
\binrv_\iteraz + \max\{ \punti_\iteraz - \binrv_\iteraz - \punti_{\iteraz-1}, 0 \} 
\right).  
\label{eq:def_tot_numb_samples}
\end{equation} 
\end{lemma}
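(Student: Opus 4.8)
The plan is to prove the two assertions separately: the distributional statement by induction on the iteration index $\iteraz$, and the counting formula \eqref{eq:def_tot_numb_samples} by a direct tally of the samples freshly drawn at each step. The backbone of the distributional argument is an elementary splitting fact for mixtures that I would isolate first: if a probability measure factors as $\nu = p\,\nu_1 + (1-p)\,\nu_2$ and $\binrv\sim\Bin(\punti,1-p)$ is drawn independently, then the collection obtained by taking $\binrv$ independent draws from $\nu_2$ together with $\punti-\binrv$ independent draws from $\nu_1$ has, up to reordering, exactly the law of $\punti$ i.i.d.\ draws from $\nu$. This is seen by introducing, in the i.i.d.\ model, the component indicator $Z_i\in\{1,2\}$ of each sample, noting that $\sum_i \mathbf 1_{\{Z_i=2\}}\sim\Bin(\punti,1-p)$, and observing that conditionally on this count the samples split into the announced independent blocks.

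For the distributional claim I would argue by induction on $\iteraz$. At the base iteration $\iteraz=1$ the algorithm draws $\punti_1$ i.i.d.\ samples from $\auxmes_{\spazio_1}$, so the claim holds. Assume the collection $\sample^1,\ldots,\sample^{\punti_{\iteraz-1}}$ produced up to step $\iteraz-1$ is distributed as $\punti_{\iteraz-1}$ i.i.d.\ draws from $\auxmes_{\spazio_{\iteraz-1}}$. At step $\iteraz$ the algorithm uses the additive decomposition \eqref{eq:decomposition_mixture}, which weights $\auxmes_{\spazio_{\iteraz-1}}$ by $\spazio_{\iteraz-1}/\spazio_\iteraz$ and $\sigma_{\spazio_\iteraz}$ by $(\spazio_\iteraz-\spazio_{\iteraz-1})/\spazio_\iteraz$, draws $\binrv_\iteraz\sim\Bin(\punti_\iteraz,(\spazio_\iteraz-\spazio_{\iteraz-1})/\spazio_\iteraz)$ independently of everything generated so far, and then assembles $\punti_\iteraz-\binrv_\iteraz$ samples from $\auxmes_{\spazio_{\iteraz-1}}$ and $\binrv_\iteraz$ samples from $\sigma_{\spazio_\iteraz}$. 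The $\binrv_\iteraz$ samples from $\sigma_{\spazio_\iteraz}$ are drawn afresh and i.i.d.; the $\punti_\iteraz-\binrv_\iteraz$ samples from $\auxmes_{\spazio_{\iteraz-1}}$ are obtained by recycling $\min\{\punti_\iteraz-\binrv_\iteraz,\punti_{\iteraz-1}\}$ of the previous-step samples and, when this is not enough, drawing $\max\{\punti_\iteraz-\binrv_\iteraz-\punti_{\iteraz-1},0\}$ new ones. By the induction hypothesis the recycled samples are i.i.d.\ $\auxmes_{\spazio_{\iteraz-1}}$, and since the recycled count is a function of $\binrv_\iteraz$ alone — hence independent of the sample values — selecting that many from the previous i.i.d.\ pool and appending fresh i.i.d.\ $\auxmes_{\spazio_{\iteraz-1}}$ draws again yields $\punti_\iteraz-\binrv_\iteraz$ i.i.d.\ $\auxmes_{\spazio_{\iteraz-1}}$ samples. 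Conditioning on $\binrv_\iteraz$ and applying the splitting fact with $\nu=\auxmes_{\spazio_\iteraz}$, $\nu_1=\auxmes_{\spazio_{\iteraz-1}}$, $\nu_2=\sigma_{\spazio_\iteraz}$, $p=\spazio_{\iteraz-1}/\spazio_\iteraz$ then shows that $\sample^1,\ldots,\sample^{\punti_\iteraz}$ is distributed as $\punti_\iteraz$ i.i.d.\ draws from $\auxmes_{\spazio_\iteraz}$, closing the induction.

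For the counting formula I would simply tally the freshly generated samples. Iteration $1$ contributes $\punti_1$ new draws. At each iteration $\iteraz\geq 2$ the only new draws are the $\binrv_\iteraz$ samples from $\sigma_{\spazio_\iteraz}$ and the $\max\{\punti_\iteraz-\binrv_\iteraz-\punti_{\iteraz-1},0\}$ extra samples from $\auxmes_{\spazio_{\iteraz-1}}$ required when the recyclable pool of size $\punti_{\iteraz-1}$ is too small, while the remaining $\min\{\punti_\iteraz-\binrv_\iteraz,\punti_{\iteraz-1}\}$ are recycled at no cost. Summing $\binrv_\iteraz+\max\{\punti_\iteraz-\binrv_\iteraz-\punti_{\iteraz-1},0\}$ over $\iteraz=2,\ldots,\iterazmax$ and adding $\punti_1$ gives exactly \eqref{eq:def_tot_numb_samples}.

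The step I expect to be the main obstacle is the justification inside the inductive step that recycling preserves the correct joint law: because the number of recycled samples is itself random through $\binrv_\iteraz$, independence must be argued conditionally on $\binrv_\iteraz$, and one must check that the recycled subset — extracted from an i.i.d.\ pool by a rule depending only on $\binrv_\iteraz$ — remains i.i.d.\ and independent of both the fresh $\auxmes_{\spazio_{\iteraz-1}}$ draws and the fresh $\sigma_{\spazio_\iteraz}$ draws. Everything else is routine once this point and the splitting fact are in place. A harmless caveat worth stating is that the samples are controlled only up to reordering; since each matrix $G_\iteraz$ depends on the collection only through its empirical (multiset) distribution, controlling the law of the unordered sample multiset, which is precisely what the splitting fact delivers, is all that is needed to invoke Corollary~\ref{thm:coro_alpha} in the proofs of Theorem~\ref{thm:teo_union_bounds}.
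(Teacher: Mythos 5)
Your second half --- the tally of freshly drawn samples that yields \eqref{eq:def_tot_numb_samples} --- is exactly the paper's argument and is correct. The problem is the distributional half, and it sits precisely at the step you flagged as the main obstacle: that step does not go through. Your splitting fact controls the pool produced at iteration $\iteraz-1$ only up to reordering, but your inductive step then recycles the \emph{first} $\min(\punti_\iteraz-\binrv_\iteraz,\punti_{\iteraz-1})$ positions of that pool. The ordering produced by Algorithm~\ref{algo:sampling_altro} is not exchangeable: fresh draws from $\sigma_{\spazio_{\iteraz-1}}$ are always written into the trailing positions, so the leading positions systematically over-represent the components of $\auxmes_{\spazio_{\iteraz-2}}$. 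Hence, on the positive-probability event $\punti_\iteraz-\binrv_\iteraz<\punti_{\iteraz-1}$ where samples are discarded, the recycled sub-collection is \emph{not} an i.i.d.\ $\auxmes_{\spazio_{\iteraz-1}}$ sample, and the splitting fact cannot be invoked. Put differently, the hypothesis your induction needs (i.i.d.\ as an ordered tuple) is strictly stronger than the conclusion your inductive step produces (i.i.d.\ up to reordering), so the induction does not close; the caveat you dismiss as harmless at the end is exactly what breaks it.

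This is a genuine failure of the step, not a presentational issue. Take $\basis_j=\sqrt{3}\,\mathbf{1}_{[(j-1)/3,\,j/3)}$ on $\setx=[0,1)$ with $\errmes$ the Lebesgue measure, so that $\newmes_1,\newmes_2,\newmes_3$ have disjoint supports (the lemma involves only these measures, so assumption \eqref{eq:assumption_christoff} plays no role; disjointness merely makes the bias visible), and take $\spazio_\iteraz=\iteraz$, $\punti_1=1$, $\punti_2=2$, $\punti_3=3$. At iteration $3$, conditionally on $\binrv_3=2$, the output is the first entry of the iteration-$2$ pool together with two fresh $\newmes_3$ samples; that first entry lies in $\mathrm{supp}(\newmes_1)$ with probability $3/4$ (it is the recycled iteration-$1$ sample unless $\binrv_2=2$), whereas three i.i.d.\ $\auxmes_{\spazio_3}$ samples conditioned on exactly two of them lying in $\mathrm{supp}(\newmes_3)$ put the remaining one in $\mathrm{supp}(\newmes_1)$ with probability $1/2$. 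Unconditionally, the configuration ``one sample in $\mathrm{supp}(\newmes_1)$, two in $\mathrm{supp}(\newmes_3)$'' has probability $\frac29\cdot\frac34=\frac16$ under the algorithm versus $\frac19$ under i.i.d.\ sampling, a discrepancy visible to the Gramian $G_3$, which depends only on the unordered sample set. For fairness: the paper's own proof of this half is the single sentence that the properties of the samples are ``ensured by construction'', so it silently skips the very same point; your attempt has the merit of exposing it. The claim becomes true, and your induction closes verbatim, once the recycling rule is made unbiased --- for instance by recycling a uniformly random subset of size $\min(\punti_\iteraz-\binrv_\iteraz,\punti_{\iteraz-1})$ of the previous pool, or equivalently by applying an independent uniform random permutation to the stored samples before truncating them.
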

\begin{proof}
The properties of the random samples are ensured by construction. 
We now proove \eqref{eq:def_tot_numb_samples}.
When $\iterazmax=1$ the sum is empty and the formula holds true. Suppose then $\iterazmax \geq 2$. 
The proof uses induction on $\iteraz$. 
At iteration $\iteraz=2$, $\punti_{\iteraz-1}$ samples from $\auxmes_{\spazio_{\iteraz-1}}$ are available, which verifies the induction hypothesis. 
Proof of the induction step: for any $\iteraz\geq 2$, supposing that  $\punti_{\iteraz-1}$ samples from $\auxmes_{\spazio_{\iteraz-1}}$ are available at iteration $\iteraz-1$, the number of recycled samples from iteration $\iteraz-1$ is $\min( \punti_\iteraz - \binrv_\iteraz,\punti_{\iteraz-1})$.
Then the algorithm adds $\max( \punti_\iteraz - \binrv_\iteraz - \punti_{\iteraz-1},0)$ new samples from $\auxmes_{\spazio_{\iteraz-1}}$. 
Afterwards $\punti_\iteraz - \max( \punti_\iteraz - \binrv_\iteraz - \punti_{\iteraz-1},0) - \min( \punti_\iteraz - \binrv_\iteraz,\punti_{\iteraz-1})$
new samples are added from $\sigma_{\spazio_\iteraz}$. At the end of iteration $\iteraz$, the algorithm produces a set containing $\punti_\iteraz$ random samples from $\auxmes_{\spazio_\iteraz}$, 
and throws away $\punti_{\iteraz-1} - \min( \punti_\iteraz - \binrv_\iteraz,\punti_{\iteraz-1})$ samples that were drawn at iteration $\iteraz-1$ from $\auxmes_{\spazio_{\iteraz-1}}$. 
Summation of each contribution of new samples at any iteration $\iteraz$ from $2$ to $\iterazmax$ gives \eqref{eq:def_tot_numb_samples}.  
\end{proof}

The number of unrecycled samples after $\iterazmax$ iterations is $\tilde\punti_\iterazmax - \punti_\iterazmax$. 
As a sum of nonnegative random variables, this number can only increase as the algorithm runs, which represents the major disadvantage of Algorithm~\ref{algo:sampling_altro}, and of any other purely random sequential algorithm.
Since $\punti_\iteraz \geq \punti_{\iteraz-1}$ and $\binrv_\iteraz\geq 0$ for all $\iteraz\geq 2$,
from \eqref{eq:def_tot_numb_samples}
we have the upper bound $\tilde\punti_\iterazmax \leq \punti_\iterazmax +  \Gaus_\iterazmax$, where $\Gaus_\iterazmax$ is the random variable defined as 
\begin{equation}
\Gaus_\iterazmax:=\sum_{\iteraz=2}^\iterazmax \binrv_\iteraz,  
\label{eq:def_rv_gauss}
\end{equation}
that gives an upper bound for the number of unrecycled samples. 
Its mean and variance are given by 
\begin{equation}
\mathbb{E}( \Gaus_\iterazmax ) 
= \sum_{\iteraz=2}^{\iterazmax} \mathbb{E}( \binrv_\iteraz ) 
= \sum_{\iteraz=2}^{\iterazmax} \punti_{\iteraz} \dfrac{\spazio_{\iteraz}-\spazio_{\iteraz-1}}{\spazio_\iteraz}, 
\label{eq:mean_binom_rv}
\quad
\textrm{Var}( \Gaus_\iterazmax ) 
= \sum_{\iteraz=2}^{\iterazmax} \textrm{Var}( \binrv_\iteraz ) 
= \sum_{\iteraz=2}^{\iterazmax} \punti_{\iteraz} \dfrac{\spazio_{\iteraz}-\spazio_{\iteraz-1}}{\spazio_\iteraz} \dfrac{\spazio_{\iteraz-1}}{\spazio_\iteraz}. 
\end{equation}
The above expressions for the mean and variance of $\Gaus_\iterazmax$ hold true for any condition between $\punti_\iteraz$ and $\spazio_\iteraz$, not necessarily of the form \eqref{eq:condition_points_zeta}. 
When \eqref{eq:condition_points_zeta} is fulfilled we have the following upper bounds. 

\begin{lemma}
\label{thm:lemma_expectation_gauss_t}
For any strictly increasing sequence $(\spazio_\iteraz)_{\iteraz\geq1}$, for any $\iterazmax\geq 2$, $\paramzeta >1$ and $\conf \in (0,1)$, if $\punti_\iteraz$ and $\spazio_\iteraz$ satisfy condition \eqref{eq:condition_points_zeta} for all $\iteraz=1,\ldots,\iterazmax$ then 
$$
\textrm{Var}(\Gaus_\iterazmax)
< 
\mathbb{E}(\Gaus_\iterazmax) 
\leq 
\punti_\iterazmax +\spazio_\iterazmax - \punti_1.
$$
\end{lemma}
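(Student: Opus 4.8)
The plan is to work directly from the closed-form expressions for the mean and variance of $\Gaus_\iterazmax$ recorded in \eqref{eq:mean_binom_rv}, namely $\mathbb{E}(\Gaus_\iterazmax) = \sum_{\iteraz=2}^\iterazmax \punti_\iteraz (\spazio_\iteraz - \spazio_{\iteraz-1})/\spazio_\iteraz$ and $\textrm{Var}(\Gaus_\iterazmax) = \sum_{\iteraz=2}^\iterazmax \punti_\iteraz (\spazio_\iteraz-\spazio_{\iteraz-1})/\spazio_\iteraz \cdot \spazio_{\iteraz-1}/\spazio_\iteraz$. The first (strict) inequality is then immediate: since $(\spazio_\iteraz)_\iteraz$ is strictly increasing we have $\spazio_{\iteraz-1}/\spazio_\iteraz < 1$ for every $\iteraz \geq 2$, so each summand of the variance is strictly smaller than the corresponding summand of the mean, every such summand is positive because $\spazio_\iteraz > \spazio_{\iteraz-1}$ and $\punti_\iteraz \geq 1$, and summing over the nonempty range $\iteraz=2,\ldots,\iterazmax$ (here $\iterazmax\geq 2$ is used) yields $\textrm{Var}(\Gaus_\iterazmax) < \mathbb{E}(\Gaus_\iterazmax)$.

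For the upper bound on the mean I would bound each summand by a telescoping increment plus a controlled error of size one. Writing $\punti_\iteraz (\spazio_\iteraz-\spazio_{\iteraz-1})/\spazio_\iteraz = \punti_\iteraz - \punti_\iteraz \spazio_{\iteraz-1}/\spazio_\iteraz$, the task reduces to lower bounding $\punti_\iteraz \spazio_{\iteraz-1}/\spazio_\iteraz$ in terms of $\punti_{\iteraz-1}$. Setting $L_\iteraz := \costprop^{-1}\ln(\zeta(\paramzeta)\spazio_\iteraz^{\paramzeta+1}/\conf)$ so that \eqref{eq:condition_points_zeta} reads $\punti_\iteraz = \lceil \spazio_\iteraz L_\iteraz \rceil$, I would use $\punti_\iteraz \geq \spazio_\iteraz L_\iteraz$ to get $\punti_\iteraz \spazio_{\iteraz-1}/\spazio_\iteraz \geq \spazio_{\iteraz-1} L_\iteraz$, then the monotonicity of $L_\iteraz$ in $\spazio_\iteraz$ (so $L_\iteraz \geq L_{\iteraz-1}$) to replace $L_\iteraz$ by $L_{\iteraz-1}$, and finally the elementary bound $\spazio_{\iteraz-1}L_{\iteraz-1} > \punti_{\iteraz-1} - 1$ coming from $\punti_{\iteraz-1} = \lceil \spazio_{\iteraz-1}L_{\iteraz-1}\rceil < \spazio_{\iteraz-1}L_{\iteraz-1}+1$. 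This chain gives $\punti_\iteraz \spazio_{\iteraz-1}/\spazio_\iteraz > \punti_{\iteraz-1} - 1$, hence $\punti_\iteraz (\spazio_\iteraz - \spazio_{\iteraz-1})/\spazio_\iteraz < \punti_\iteraz - \punti_{\iteraz-1} + 1$.

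Summing this per-term bound over $\iteraz = 2,\ldots,\iterazmax$, the differences telescope to $\punti_\iterazmax - \punti_1$ and the $+1$ contributions add up to $\iterazmax - 1$, giving $\mathbb{E}(\Gaus_\iterazmax) < \punti_\iterazmax - \punti_1 + (\iterazmax - 1)$. To close, I would reuse the consequence of strict monotonicity together with $\spazio_1 \geq 1$ that already appears in the proof of Theorem~\ref{thm:teo_union_bounds_mio}, namely $\spazio_\iterazmax \geq \iterazmax$, so that $\iterazmax - 1 < \spazio_\iterazmax$ and therefore $\mathbb{E}(\Gaus_\iterazmax) < \punti_\iterazmax + \spazio_\iterazmax - \punti_1$, which implies the desired non-strict inequality.

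I expect the only delicate point to be keeping the two ceiling operators under control so that the error incurred per term is bounded by exactly one: the inequalities $\punti_\iteraz \geq \spazio_\iteraz L_\iteraz$ and $\punti_{\iteraz-1} < \spazio_{\iteraz-1} L_{\iteraz-1} + 1$ must be combined in opposite directions, and it is the monotonicity $L_{\iteraz-1} \leq L_\iteraz$ that lets the index mismatch between the two ceilings close. Everything else is a routine telescoping sum.
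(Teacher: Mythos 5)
Your proof is correct, and it shares the paper's skeleton---start from the closed forms in \eqref{eq:mean_binom_rv}, exploit monotonicity of the logarithmic factor in $\spazio_\iteraz$, and telescope---but the bookkeeping is genuinely different. Writing $L_\iteraz:=\costprop^{-1}\ln(\zeta(\paramzeta)\spazio_\iteraz^{\paramzeta+1}/\conf)$, the paper bounds each summand of the mean by $(\spazio_\iteraz-\spazio_{\iteraz-1})\lceil L_\iteraz\rceil$, i.e.\ it passes immediately to the quantities $\hat\punti_\iteraz=\spazio_\iteraz\lceil L_\iteraz\rceil$ of condition \eqref{eq:condition_points_zeta_tau}: monotonicity is invoked for the ceiled factors $\lceil L_\iteraz\rceil$, the sum collapses exactly to $\hat\punti_\iterazmax-\hat\punti_1$, and the ceilings are paid for only once, at the endpoints, through $\hat\punti_\iterazmax\leq\punti_\iterazmax+\spazio_\iterazmax-1$ and $\hat\punti_1\geq\punti_1$ as in Lemma~\ref{thm:lemma_equi_condi}. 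You instead stay with the raw quantities $\spazio_\iteraz L_\iteraz$ and telescope the $\punti_\iteraz$ themselves, paying $+1$ per term through the chain $\punti_\iteraz\spazio_{\iteraz-1}/\spazio_\iteraz\geq\spazio_{\iteraz-1}L_\iteraz\geq\spazio_{\iteraz-1}L_{\iteraz-1}>\punti_{\iteraz-1}-1$ (each link of which is sound: the two ceiling inequalities are used in opposite directions and the index mismatch is indeed closed by $L_{\iteraz-1}\leq L_\iteraz$), and you then absorb the accumulated error $\iterazmax-1$ via $\spazio_\iterazmax\geq\iterazmax$, an extra ingredient the paper's proof does not need but which is valid, since strict monotonicity and integrality of the dimensions with $\spazio_1\geq 1$ force $\spazio_\iteraz\geq\iteraz$, exactly as used in the proof of Theorem~\ref{thm:teo_union_bounds_mio}. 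What your variant buys is a slightly sharper intermediate estimate, $\mathbb{E}(\Gaus_\iterazmax)<\punti_\iterazmax-\punti_1+\iterazmax-1$, which implies the stated bound with room to spare because $\iterazmax-1<\spazio_\iterazmax$; what the paper's variant buys is that all ceiling slack is confined to a single endpoint term and the conclusion follows directly from bounds already established in Lemma~\ref{thm:lemma_equi_condi}. The variance part (each variance summand equals the corresponding mean summand times $\spazio_{\iteraz-1}/\spazio_\iteraz<1$, with positive summands and a nonempty range since $\iterazmax\geq 2$) is handled identically in both proofs.
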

\begin{proof}
\begin{align*}
\mathbb{E}( \Gaus_\iterazmax ) 
= &
\sum_{\iteraz=2}^{\iterazmax} \punti_{\iteraz}
\dfrac{\spazio_{\iteraz}-\spazio_{\iteraz-1}}{\spazio_\iteraz} 
\\
\leq &
\sum_{\iteraz =2}^\iterazmax
(\spazio_\iteraz - \spazio_{\iteraz-1})
\left\lceil
\costprop^{-1} 
\ln\left( 
\dfrac{ 
\zeta(\paramzeta) \, 
\spazio_\iteraz^{\paramzeta +1} 
}{ \conf  }
\right)
\right\rceil 
\\
\leq &
\sum_{\iteraz =2}^\iterazmax
\spazio_\iteraz 
\left\lceil
\costprop^{-1} 
\ln\left( 
\dfrac{ 
\zeta(\paramzeta) \, 
\spazio_\iteraz^{\paramzeta +1} 
}{ \conf  }
\right)
\right\rceil
- \spazio_{\iteraz-1}
\left\lceil
\costprop^{-1} 
\ln\left( 
\dfrac{ 
\zeta(\paramzeta) \, 
\spazio_{\iteraz-1}^{\paramzeta +1} 
}{ \conf  }
\right)
\right\rceil
\\
= &
\spazio_\iterazmax 
\left\lceil
\costprop^{-1} 
\ln\left( 
\dfrac{ 
\zeta(\paramzeta) \, 
\spazio_\iterazmax^{\paramzeta +1} 
}{ \conf  }
\right)
\right\rceil
- \spazio_{1}
\left\lceil
\costprop^{-1} 
\ln\left( 
\dfrac{ 
\zeta(\paramzeta) \, 
\spazio_{1}^{\paramzeta +1} 
}{ \conf  }
\right)
\right\rceil
\\ 
\leq & \punti_{\iterazmax} +\spazio_\iterazmax - \punti_{1}. 
\end{align*}
The inequality for $\textrm{Var}(\Gaus_\iterazmax)$ follows from \eqref{eq:mean_binom_rv} and strict monotonicity of the sequence $(\spazio_\iteraz)_\iteraz$.
\end{proof}

\begin{remark}
\label{thm:gaussian_binomial_approx}
Here we show  that the random variable  $\Gaus_\iterazmax$ concentrates like a Gaussian random variable with mean and variance given by \eqref{eq:mean_binom_rv}.
The central limit theorem for a binomial random variable $\binrv\sim\Bin(m,p)$ with number of trials $m$ and success probability $p$ states that 
\begin{align*}
\lim_{m\to \infty} \textrm{Pr}\left(  \dfrac{ \binrv - mp }{\sqrt{ m p (1-p)}} \leq b \right) 
= & 
\Phi\left(b \right), \quad b \in \mathbb{R},
\end{align*}
where $\Phi$ is the cumulative distribution function of the standard Gaussian distribution. 
This justifies the well-known Gaussian approximation of $\binrv$ 
when $m$ is sufficiently large. 
This 
approximation is 
very accurate already when $mp\geq 5$ and $m(1-p)\geq 5$. 
In our settings, when $\punti_\iteraz$ and $\spazio_\iteraz$ satisfy \eqref{eq:condition_points_zeta} for some $\conf\in(0,1)$ and $\paramzeta>1$, the parameters of the binomial random variables $\binrv_\iteraz$ overwhelmingly verify the above conditions for any $\iteraz=2,\ldots,\iterazmax$, since 
\begin{equation}
\label{eq:first_hypo}
\dfrac{\punti_\iteraz (\spazio_\iteraz - \spazio_{\iteraz -1}) }{\spazio_\iteraz} 
\geq 
\costprop^{-1}
\ln\left( 
\dfrac{ 
\zeta(\paramzeta) \, 
\spazio_\iteraz^{\paramzeta +1} 
}{ \conf  }
\right)
(\spazio_\iteraz - \spazio_{\iteraz-1})
\geq
\costprop^{-1}
\ln\left( 
\dfrac{ 
\zeta(\paramzeta) \, 
\spazio_\iteraz^{\paramzeta +1} 
}{ \conf  }
\right)
 \gg  5, 
\end{equation}
\begin{equation}
\dfrac{\punti_\iteraz \spazio_{\iteraz -1} }{\spazio_\iteraz} =  
\costprop^{-1}
\ln\left( 
\dfrac{ 
\zeta(\paramzeta) \, 
\spazio_\iteraz^{\paramzeta +1} 
}{ \conf  }
\right)
\spazio_{\iteraz-1}
 \gg
5, 
\label{eq:assumpt_approx_bin_gaus}
\end{equation}
and $\costprop^{-1} \approx 9.242$. 
Using the Gaussian approximation of the binomial distribution, each $\binrv_\iteraz$ behaves like a Gaussian r.v.~with the same mean and variance. 
A finite linear combination of independent Gaussian random variables is a Gaussian random variable. 
Hence the r.v.~$\Gaus_\iterazmax$ behaves like a Gaussian r.v.~with mean and variance as in \eqref{eq:mean_binom_rv}. 
\end{remark}

\subsection{Comparison of the sampling algorithms}
\label{sec:comparison}
The main properties of Algorithm~\ref{algo:sampling_mio_simplif} and Algorithm~\ref{algo:sampling_altro} are resumed below. 
At any iteration say $\iterazmax$: 
\begin{itemize}
\item 
Algorithm~\ref{algo:sampling_mio_simplif} generates $\punti_\iterazmax = \intero_\iterazmax \spazio_\iterazmax$ independent random samples with $\intero_\iterazmax$ being any positive integer, and such that $\intero_\iterazmax$ of these random samples are drawn from $\newmes_j$, for any $j=1,\ldots,\spazio_{\iterazmax}$.    
This algorithm recycles all the samples generated at all the previous iterations $\iteraz=1,\ldots,\iterazmax-1$. 

\item 
Algorithm~\ref{algo:sampling_altro} generates $\punti_\iterazmax$ independent random samples 
from $\auxmes_{\spazio_\iterazmax}$. 
This algorithm recycles most of the samples generated at all the previous iterations $\iteraz=1,\ldots,\iterazmax-1$.  
If \eqref{eq:condition_points_zeta} holds true at any iteration, then the number of unrecycled samples at iteration $\iterazmax$ is upper bounded by the random variable \eqref{eq:def_rv_gauss} with mean $(1+\costprop)\punti_\iterazmax$ and variance $(1+\costprop)\punti_\iterazmax$, that exhibits Gaussian concentration. 
    
\item 
During the execution, 
Algorithm~\ref{algo:sampling_mio_simplif} modifies each element of the output set 
$\sample^1,\ldots,\sample^{\punti_\iterazmax}$ only once, in contrast to 
Algorithm~\ref{algo:sampling_altro} that can modify the same element several times, 
when discarding previously generated random samples. 

\item Algorithm~\ref{algo:sampling_mio_simplif} and Algorithm~\ref{algo:sampling_altro} use any  sequence of nested spaces $(V_\iteraz)_{\iteraz}$.

\item 
The weighted least-squares estimators constructed with the random samples generated by both Algorithm~\ref{algo:sampling_mio_simplif} and Algorithm~\ref{algo:sampling_altro} share the same theoretical guarantees, 
see Theorem~\ref{thm:teo_union_bounds_mio} and Theorem~\ref{thm:teo_union_bounds}. 

\item 
In practice Algorithm~\ref{algo:sampling_mio_simplif} outperforms Algorithm~\ref{algo:sampling_altro} in all our numerical tests, recycling all the samples from all the previous iterations, and producing on average more stable Gramian matrices. 
\end{itemize}

\begin{remark}
\label{thm:stabil_localiz}
When using random samples from $\mymes^\punti$ rather than from $\auxmes^\punti$, 
the benefits of variance reduction increase with more localized basis than orthogonal polynomials, like wavelets. 
The structure of the random samples from $\mymes^\punti$ ensures that for any element of the basis $\basis_j \in V$ at least one sample is contained in $\textrm{supp}(\basis_j)$. 
If this is not the case then the Gramian matrix is singular, because the discrete inner product of two functions is equal to zero when none of the samples is contained in the intersection of their supports.    
\end{remark}

\section{Numerical methods for adaptive (polynomial) approximation}
\noindent
The results presented in Theorems~\ref{thm:teo_union_bounds_mio} and \ref{thm:teo_union_bounds} hold for any nested sequence $(V_\iteraz)_\iteraz$ of general approximation spaces, in any dimension $d$. 
Two families of spaces that are suitable for approximation in arbitrary dimension $d$ 
are polynomial spaces and wavelet spaces.  
In this paper we confine to polynomial spaces. 
Even with this restriction, adaptive numerical methods in such a general context are still quite a large subject. Our focus in the present paper is on a more specific type of adaptive methods, 
in the spirit of orthogonal matching pursuit, and 
on the line of the greedy algorithms described in \cite{DT}.

The spaces $V_\iteraz$ can be adaptively chosen from one iteration to the other, as long as the sequence remains nested. 
Without additional information on the function that we would like to approximate, the infinite number of elements in the basis prevents the development of a concrete strategy for performing the adaptive selection. 
Such additional information is available in the form of decay of the coefficients, for example, for some PDEs with parametric or stochastic coefficients, whose solution is provably well-approximated by so-called downward closed polynomial spaces.  
See \cite{CD} and references therein for an introduction to the topic. The definition of downward closed polynomial spaces is postponed to \eqref{down_clos_def}. 
In the remaining of this section we assume that 
\begin{equation}
\substack{
\textrm{\normalsize the function $u$ can be well approximated by a nested sequence}\\
\textrm{\normalsize of downward closed polynomial approximation spaces.} 
}
\label{eq:assumption_seq_down_clos}
\end{equation}
As a relevant example that motivates our interest in the above setting, for PDEs with lognormal diffusion coefficients it was shown by the author in \cite[Lemma 2.4]{CM2017} that suitable polynomial spaces yielding provable convergence rates are actually downward closed.  

After \eqref{eq:assumption_seq_down_clos} we restrict our analysis to nested sequences $(V_\iteraz)_\iteraz$ of polynomial spaces satisfying the additional constraint of being downward closed. 
At iteration $\iteraz$, given $V_{\iteraz-1}$, an ideal (local) optimal criterion for performing the adaptive selection is to choose $V_{\iteraz}\supset V_{\iteraz-1}$ as the space that delivers the smallest error among all possible downward closed spaces with prescribed dimension, 
for example $\spazio_{\iteraz}=1+\spazio_{\iteraz-1}$. 
Since $d$ is finite, the number of all possible choices for $V_\iteraz$ is also finite.  
In reality the exact error $\|u-\Pi_{\spazio_\iteraz} u\|$ is not available, and the adaptive selection has to rely on the error $\|u-u_C^\iteraz \|$ that is a random variable. 
Here the error estimates from Theorems~\ref{thm:teo_union_bounds_mio} and \ref{thm:teo_union_bounds} come in handy because they ensure that $\|u-u_C^\iteraz\|^2$ is less than twice $\|u-\Pi_{\spazio_\iteraz} u\|^2$ in expectation. 
Even if the exact error was available, the adaptive selection using the local optimal criterion does not ensure optimality of the selected spaces at the following iterations, and for this reason it is referred to as a \emph{greedy} adaptive selection. 

Before moving to the description of the adaptive algorithm, we briefly introduce some definitions that are useful to describe the polynomial setting. 
Hereafter we assume that $\setx=\times_{i=1}^d I_i$ is the Cartesian product of intervals $I_i \subset \mathbb{R}$, and that $d\errmes=\otimes_{i=1}^d d\errmes_i$ where each $\errmes_i$ is a probability measure defined on $I_i$. 
This setting ensures the existence of a product basis orthonormal in $L^2(\setx,\errmes)$ that we now introduce.   
To simplify the presentation and notation, we further suppose that $I:=I_j$ and $\tilde\errmes:=\errmes_j$ for any $j$, 
and denote with $(T_j)_{j\geq 1}$ the univariate family of orthogonal polynomials, orthonormal in $L^2(I, \tilde\errmes)$. 
Let $\Lambda \subset \cF := \mathbb{N}_0^{d}$ be a multi-index set enumerated according to an ordering relation, for example the lexicographical ordering. 
Using $\Lambda$ we define 
\begin{equation}
\basis_\nu(\sample) := \prod_{i=1}^d T_{\nu_i}(\sample_i), \quad \nu=(\nu_1,\ldots,\nu_d) \in \Lambda,\quad \sample=(\sample_1,\ldots,\sample_d) \in \setx, 
\label{eq:multiv_basis}
\end{equation}
and relate the orthonormal basis $(\basis_i)_{i\geq 1}$ from the previous sections to the above orthonormal basis as $\basis_\indbasis = \basis_{\nu^\indbasis}$ for any $i=1,\ldots,\#(\Lambda)$, where $\nu^\indbasis$ is the $\indbasis$th element of $\Lambda$ according to the lexicographical ordering, and $\#(\Lambda)$ denotes the cardinality of $\Lambda$.  
The space associated to $\Lambda$ is defined as $V_{\Lambda}:=\textrm{span}\left\{ \basis_\nu : \nu \in \Lambda  \right\}$. 

A set $\Lambda \subset \cF$ is downward closed if 
\begin{equation}
\nu \in \Lambda \textrm{ and } \nu^\prime \leq \nu \implies \nu^\prime \in \Lambda, 
\label{down_clos_def}
\end{equation}
where the ordering $\nu^\prime \leq \nu$ is intended in the lexicographical sense. 
We say that the space $V_\Lambda$ is downward closed if the supporting index set $\Lambda$ is downward closed. 
For any $\Lambda \subset \cF$ downward closed we define its margin 
$\mathcal{M}(\Lambda)$ as 
$$
\mathcal{M}(\Lambda) := 
\left\{
\nu \in 
\cF 
: 
\nu \notin \Lambda \wedge \exists j 
\in \{1,\ldots,d\} : 
\nu  - e_j \in \Lambda
  \right\}, 
$$
where $e_j \in \cF
$ is the multi-index with all components equal to zero, except the $j$th component  that is equal to one. 
The reduced margin $\mathcal{R}(\Lambda)$ of $\Lambda$ is defined as 
$$
\mathcal{R}(\Lambda):=\{  \nu \in 
\cF
: \nu \notin \Lambda \wedge 
\forall j \in \{1,\ldots, d\}, \, \nu_j \neq 0 \implies \nu - e_j \in \Lambda
 \} \subseteq \mathcal{M}(\Lambda). 
$$
If $\Lambda$ is downward closed then $\Lambda \cup \{\nu\}$ is downward closed for any $\nu \in \mathcal{R}(\Lambda)$.  

Finally we choose the space $V_\iteraz$ from the previous sections as $V_\iteraz=V_{\Lambda_\iteraz}$ for any $\iteraz$ 
by means of a nested sequence $(\Lambda_\iteraz)_\iteraz \subset \cF$ of downward closed multi-index sets.   
For any $\iteraz \geq 1$,  $\#(\Lambda_\iteraz)=\textrm{dim}(V_\iteraz) = \spazio_\iteraz$ equals the dimension of $V_\iteraz$.

\subsection{An adaptive OMP algorithm}
\noindent 
In this section we describe an adaptive algorithm using optimal weighted least squares, starting from the algorithm proposed in \cite{M2014p} for standard least squares and inspired by orthogonal matching pursuit.  
The algorithm builds a sequence of nested spaces $V_{\Lambda_1} \subset \ldots \subset V_{\Lambda_\iterazmax}$ 
performing at each iteration an adaptive greedy selection of the indices identifying the elements of the basis. 
The adaptive construction of the index sets uses ideas that were originally proposed in \cite{GG2003} 
for developing adaptive sparse grids quadratures.  
The greedy selection of the indices uses a marking strategy known as bulk chasing. 

The adaptive algorithm works with downward closed index sets.
Given any $\Lambda$ downward closed, a  nonnegative function $e:\mathcal{R}(\Lambda) \to \mathbb{R}$ and a parameter $\paradorfler \in (0,1]$, we define the procedure $\textrm{BULK}:=\textrm{BULK}(\mathcal{R}(\Lambda),e,\paradorfler)$ that computes a set $F \subseteq \mathcal{R}(\Lambda) $ of minimal positive cardinality such that 
\begin{equation}
\label{eq:estimator_dorf}
\sum_{\nu \in F} e(\nu) \geq \paradorfler \sum_{\nu \in \mathcal{R}(\Lambda)} e(\nu). 
\end{equation}
Denote with $a_\nu$ the coefficient associated to $\basis_\nu$ in the expansion $u=\sum_{\nu \in 
\cF
} a_\nu \basis_\nu $. 
For any $\nu \in \mathcal{R}(\Lambda)$, the function $e(\nu)$ is chosen as an estimator for $|a_\nu|^2$.  
The adaptive algorithm is described in Algorithm~\ref{algo:adaptive}. 
At any iteration $\iteraz$ of the algorithm, $\punti_\iteraz$ random samples are generated by using Algorithm~\ref{algo:sampling_mio_simplif}, with 
$\punti_\iteraz$ satisfying \eqref{eq:condition_points_zeta_tau} as a function of $\spazio_\iteraz=\#(\Lambda_\iteraz)$, for a given choice of the parameters $\conf$ and $\paramzeta$. 
The $\punti_\iteraz$ random samples are used to compute the weighted least-squares estimator $u_C^{\iteraz}$ on $V_{\Lambda_\iteraz}$.  
For convenience in Algorithm~\ref{algo:adaptive} the operations performed by Algorithm~\ref{algo:sampling_mio_simplif} have been merged with those for the adaptive selection of the space. 
In Algorithm~\ref{algo:adaptive} the $\newmes_\nu$ correspond to $\newmes_j$ with $\basis_j=\basis_\nu$. 
An estimator for $|a_\nu|^2$ proposed in \cite{M2014p} 
that uses only the information available at iteration $\iteraz-1$ is 
\begin{equation}
e_{\iteraz-1}(\nu) := \left| \langle 
u-u_C^{\iteraz-1}, \basis_\nu \rangle_{\punti_{\iteraz-1}}  \right|^{2}, \quad \nu \in \mathcal{R}(\Lambda_{\iteraz-1}), 
\label{eq:estimator_coeff}
\end{equation}
where the discrete inner product uses the evaluations of the function $u$ at the same $\punti_{\iteraz-1}$ samples that have been used to compute $u_C^{\iteraz-1}$ at iteration $\iteraz-1$. 
The estimator \eqref{eq:estimator_coeff} uses 
the residual $r_{\iteraz-1}:=u-u_C^{\iteraz-1}$ 
and is cheap to compute: it requires only the product of a vector with a matrix. 

A safeguard mechanism prevents Algorithm~\ref{algo:adaptive} from getting stuck into indices associated to null coefficients in the expansion of $u_C^{\iteraz}$. 
Given a positive integer $\iteraz_{\textrm{sg}}$, once every $\iteraz_{\textrm{sg}}$ iterations the algorithm adds to $\Lambda_{\iteraz}$ the most ancient multi-index from $\mathcal{R}(\Lambda_{\iteraz-1})\setminus F$. 
In the numerical tests reported in the next section, such a mechanism was never activated, and the algorithm was always able to identify the best $\spazio_\iteraz$-term index sets of the given function at any iteration $\iteraz$.  

Algorithm~\ref{algo:adaptive} can be modified by relaxing \eqref{eq:condition_points_zeta_tau} to a less demanding condition between $\punti_\iteraz$ and $\spazio_\iteraz$ at each iteration $\iteraz$. 
For example, the random samples can be added until a stability condition of the form $\vvvert G_\iteraz - I_\iteraz \vvvert \leq \thresholdcond$ is met, for some given threshold $\thresholdcond > 1/2$. 
This provides a fully adaptive algorithm as described in Algorithm~\ref{algo:fullyadaptive}, that however, 
in contrast to Algorithm~\ref{algo:adaptive}, does not come with the theoretical guarantees of Theorem~\ref{thm:teo_union_bounds_mio}. 

\subsection{Testing the sampling algorithms}
\noindent
This section presents some numerical tests of the sampling algorithms that generate the random samples, comparing Algorithm~\ref{algo:sampling_mio_simplif} and Algorithm~\ref{algo:sampling_altro}. 
At the very end, our implementation of both algorithms uses inverse transform sampling as described in \cite[Section 5.2]{CM2016} 
for drawing samples from all the $\newmes_j$. 

A natural vehicle to quantify the quality of the generated samples is the deviation of the matrix $G_\iteraz$ from the identity, \emph{i.e.} $\vvvert G_\iteraz - I_\iteraz \vvvert$. 
Since $\vvvert G_\iteraz - I_\iteraz \vvvert \leq \frac12 \implies  \textrm{cond}(G_\iteraz):=\vvvert G_\iteraz^{-1} \vvvert \vvvert G_\iteraz \vvvert \leq 3$, our tests show the condition number, that is a more meaningful quantity when solving a linear system.   

From the point of view of the stability and convergence properties of the weighted least-squares estimators, the random samples generated  by both algorithms come with the same theoretical guarantees. 
But, in contrast to Algorithm~\ref{algo:sampling_altro}, Algorithm~\ref{algo:sampling_mio_simplif} recycles all the samples from the previous iterations. 
This is the main reason to prefer Algorithm~\ref{algo:sampling_mio_simplif} over Algorithm~\ref{algo:sampling_altro}. 
Another reason to choose Algorithm~\ref{algo:sampling_mio_simplif} is that it produces more stable Gramian matrices on average, since the sample variance of the generated samples is lower. 

\begin{figure}[htbp]
\begin{center}
\includegraphics[scale=0.4, bb=128 266 469 561]{./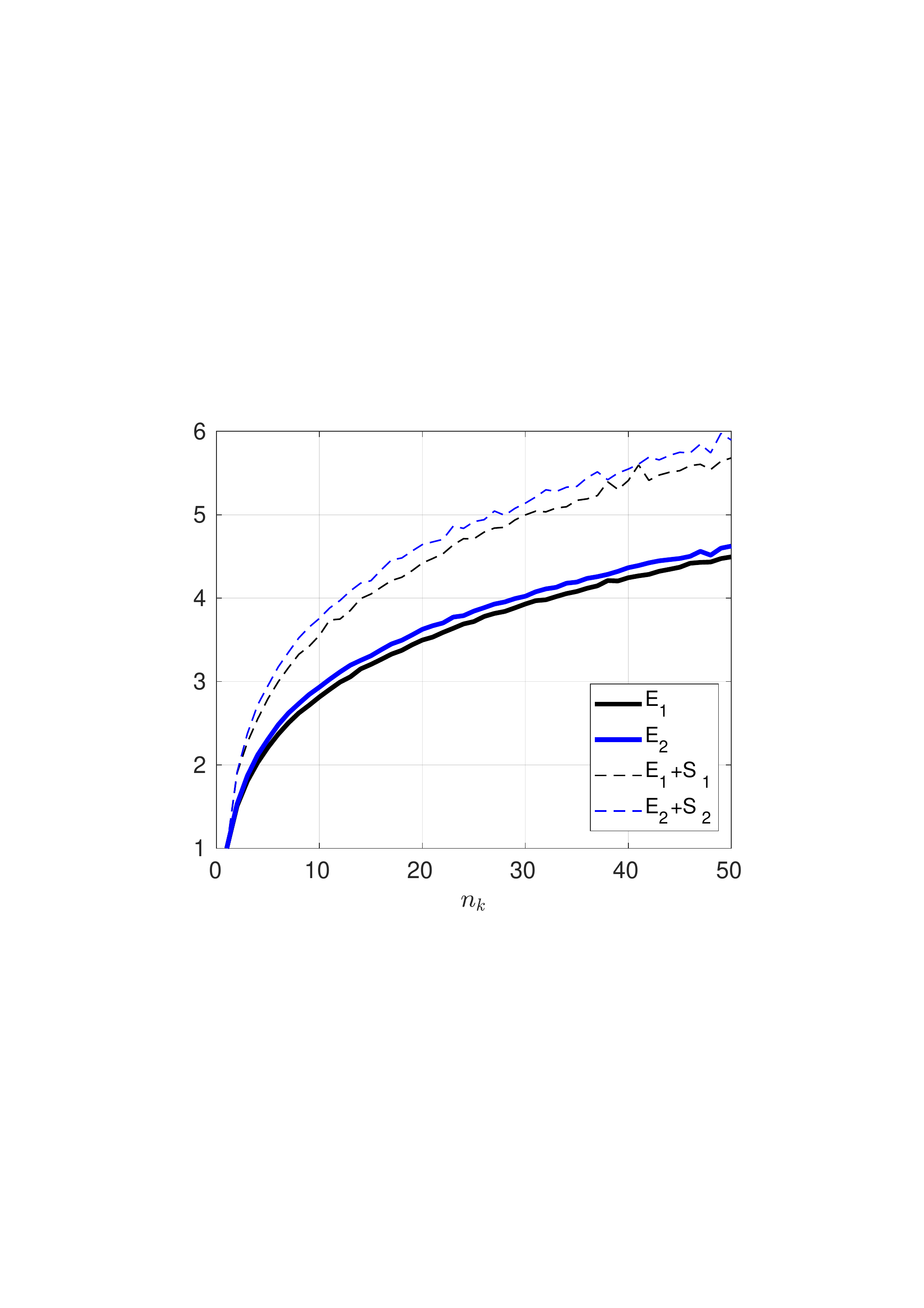}
\hspace{0.35cm}
\includegraphics[scale=0.43, bb=118 276 469 561]{./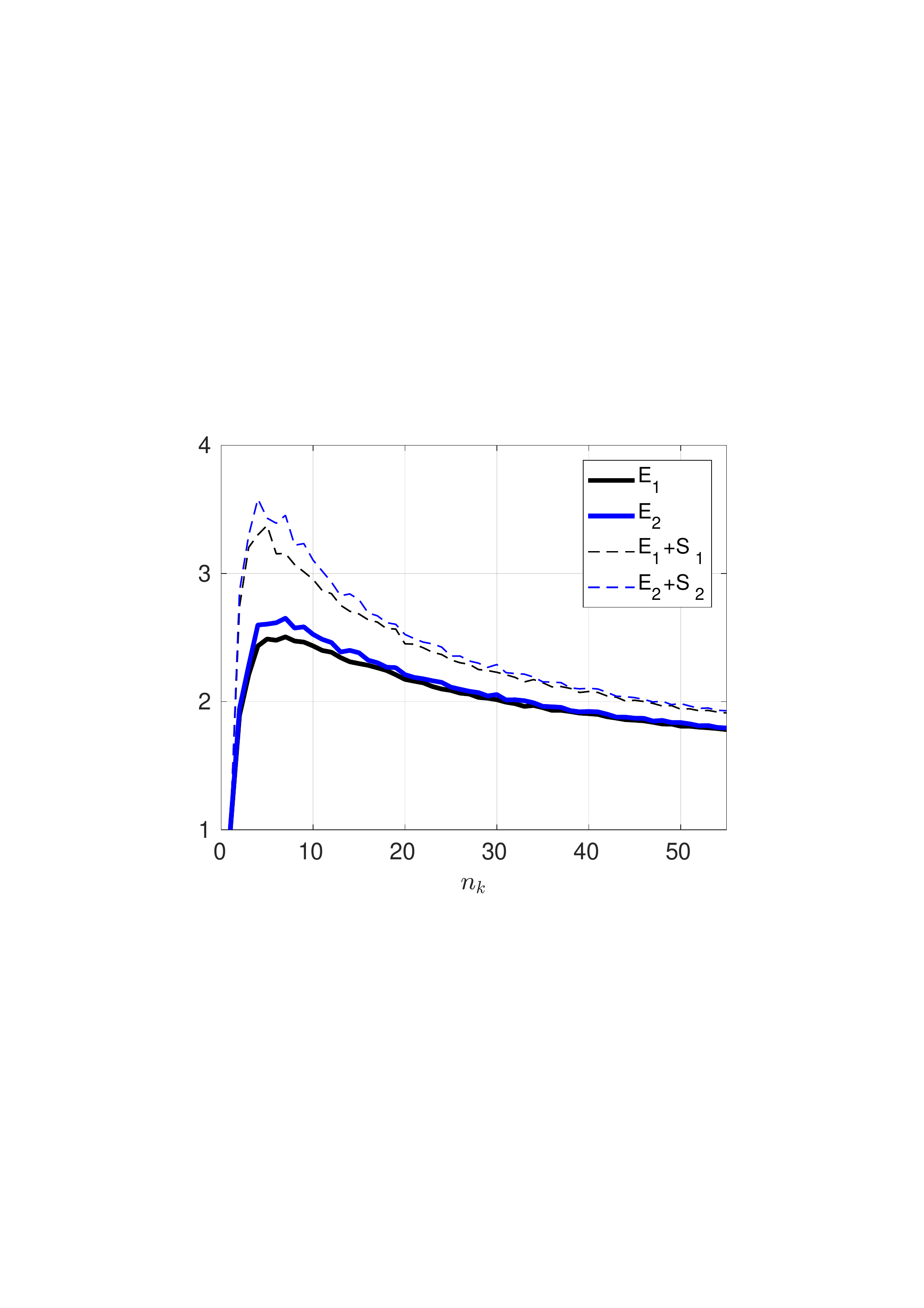}
\hspace{0.7cm}
\includegraphics[scale=0.4, bb=128 266 469 561]{./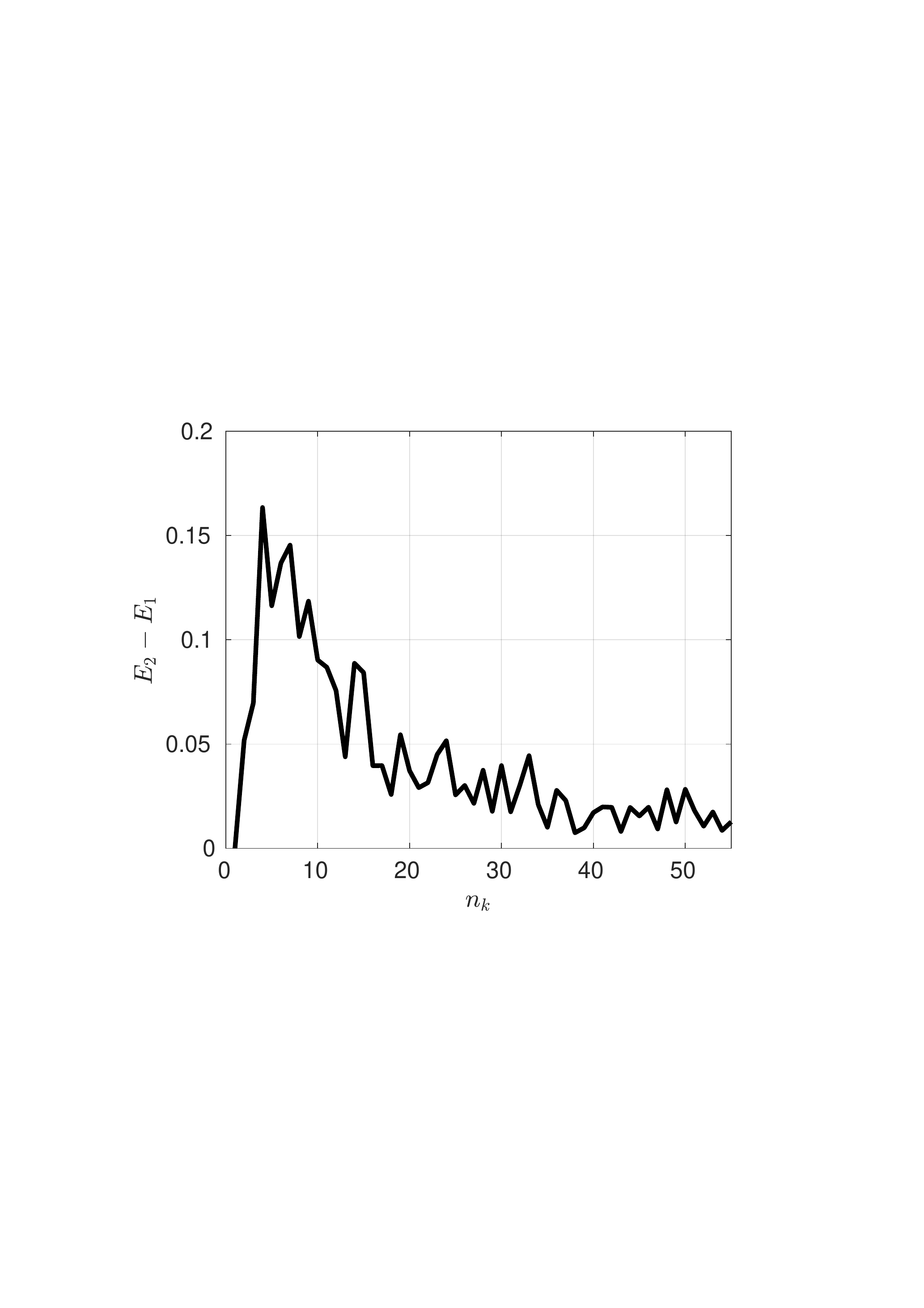}
\end{center}
\vspace{-0.6cm}
\caption{
Left: estimators $E_i$ and $E_i+S_i$ of the sequence of random variables $(\textrm{cond}(G_\iteraz))_{\iteraz\geq 1}$ at iteration $\iteraz=1,\ldots,50$ with $\spazio_\iteraz=\iteraz$ and $\punti_\iteraz= \lceil \costprop^{-1} \rceil \spazio_\iteraz $.  Hermite polynomials. $d=1$.  
The estimators use $10^4$ realizations of the sequence $(\textrm{cond}(G_\iteraz))_{\iteraz\geq 1}$.
Center: estimators $E_i$ and $E_i+S_i$ of the sequence of random variables $(\textrm{cond}(G_\iteraz))_{\iteraz\geq 1}$  
at iteration $\iteraz=1,\ldots,55$ with $\spazio_\iteraz=\iteraz$ and 
$\punti_\iteraz= (3+\spazio_\iteraz) \spazio_\iteraz $.  Hermite polynomials. $d=1$.  
The estimators use $10^3$ realizations of the sequence $(\textrm{cond}(G_\iteraz))_{\iteraz\geq 1}$.
Right: same simulation as center but showing $E_2-E_1$. 
}
\label{fig:algos_samp_comp}
\end{figure}

Our first tests illustrate the benefits of variance reduction, using spaces $V_\iteraz$ of univariate Hermite polynomials $(H_j)_{j\geq 0}$ with degree from $0$ to $\iteraz-1$.  
More precisely, the sequence $(H_j)_{j\geq 0}$ contains univariate Hermite polynomials orthonormalised as $\int_{\mathbb{R}} H_i(t) H_j(t) \, dg = \delta_{ij}$, where $dg:=(2\pi)^{-1/2} e^{-t^2/2} \, dt$. 
Denote with $E_{i}\approx\mathbb{E}(\textrm{cond}(G_\iteraz))$ and $S_{i}^2\approx \textrm{Var}(\textrm{cond}(G_\iteraz))$ the sample mean and sample variance estimators of the random variable $\textrm{cond}(G_\iteraz)$ 
with $G_\iteraz$ constructed using the random samples generated by Algorithm $i \in \{1,2\}$. 
Figure~\ref{fig:algos_samp_comp}-left shows the comparison of $E_i$ and $E_i+S_i$ between the two algorithms, with $\punti_\iteraz=\lceil \costprop^{-1} \rceil \spazio_\iteraz$. 
Both estimators confirm that Algorithm~\ref{algo:sampling_mio_simplif} produces random samples whose Gramian matrix is better conditioned than Algorithm~\ref{algo:sampling_altro}. 
The same trend persists when choosing other scalings like $\punti_\iteraz=(3+\spazio_\iteraz) \spazio_\iteraz$, see Figure~\ref{fig:algos_samp_comp}-center and Figure~\ref{fig:algos_samp_comp}-right.    
The difference between the two algorithms is expected to amplify when using more localized basis, with Algorithm~\ref{algo:sampling_altro} producing much more ill-conditioned Gramian matrices as the ratio $\punti_\iteraz/\spazio_\iteraz$ decreases.

From now on the focus is on Algorithm~\ref{algo:sampling_mio_simplif}. For all the tests in the remaining part of this section we choose $\punti_\iteraz$ as in \eqref{eq:condition_points_zeta_tau} with $\conf=0.1$ and $\paramzeta=2$. The value of $\conf$ is chosen fairly large on purpose to check, in practice, how sharp the stability constraint \eqref{eq:condition_points_zeta_tau} is. 
In the first test, we choose $\errmes=\otimes^d dg$ as the $d$-dimensional probabilistic Gaussian measure on $\setx=\mathbb{R}^d$, and $V_\iteraz$ as the spaces of tensorized Hermite polynomials, obtained from \eqref{eq:multiv_basis} by taking $T_j=H_j$, ${j\geq 0}$. 
The Gaussian case poses several challenges: as shown in \cite{CM2016}, standard least-squares estimators with Hermite polynomials typically fail due to the ill-conditioning of the Gramian matrix. 
Since the ill-conditioning arises with high-degree polynomials, we choose fairly low-dimensional tests to begin with, such that very high-degree polynomials can be tested, \emph{e.g.}~degrees beyond 100. 
With $d=1$ the results are shown in Figure~\ref{fig:cond_hermite}-left, 
and with $d=4$ in Figure~\ref{fig:cond_hermite}-right.    
The condition number of $G_\iteraz$ stays well below the threshold equal to $3$ during all the simulations,   
which contain, respectively, $10^4$ and $10^3$ realizations of the sequence $(\textrm{cond}(G_\iteraz))_{\iteraz\geq 1}$ with random samples generated by  Algorithm~\ref{algo:sampling_mio_simplif}. 
At any iteration $\iteraz$, the index set $\Lambda_{\iteraz}\supset \Lambda_{\iteraz-1}$ that defines the space $V_{\iteraz} =V_{\Lambda_\iteraz}$ is generated by adding to $\Lambda_{\iteraz-1}$ a random number of indices randomly chosen from $\mathcal{R}(\Lambda_{\iteraz-1})$. 
This procedure generates nested sequences of downward closed index sets, see Figure~\ref{fig:legendre_and_index_set}-right for an example of such a set.  
With other families of orthogonal polynomials the results are very similar. 
For example, with $d=4$, the results in Figure~\ref{fig:legendre_and_index_set}-left with the $d$-dimensional uniform probabilistic measure on $\setx=[-1,1]^d$ and Legendre polynomials are analogous to those obtained in Figure~\ref{fig:cond_hermite}-right with the Gaussian measure and Hermite polynomials.
Figure~\ref{fig:legendre_and_index_set}-right shows an example of (the section of the first and second coordinates of) an index set $\Lambda_{\iteraz}$ obtained in the simulation of Figure~\ref{fig:cond_hermite}-right at iteration $\iteraz=500$.
This set contains products of univariate Hermite polynomials with degree over $110$ in the first coordinate and up to $59$ in the second coordinate, and degree up to $25$ and $9$ in the remaining third and fourth coordinates not displayed in the figure.  
\begin{figure}[htbp]
\begin{center}
\includegraphics[scale=0.4, bb=83 252 512 590]{./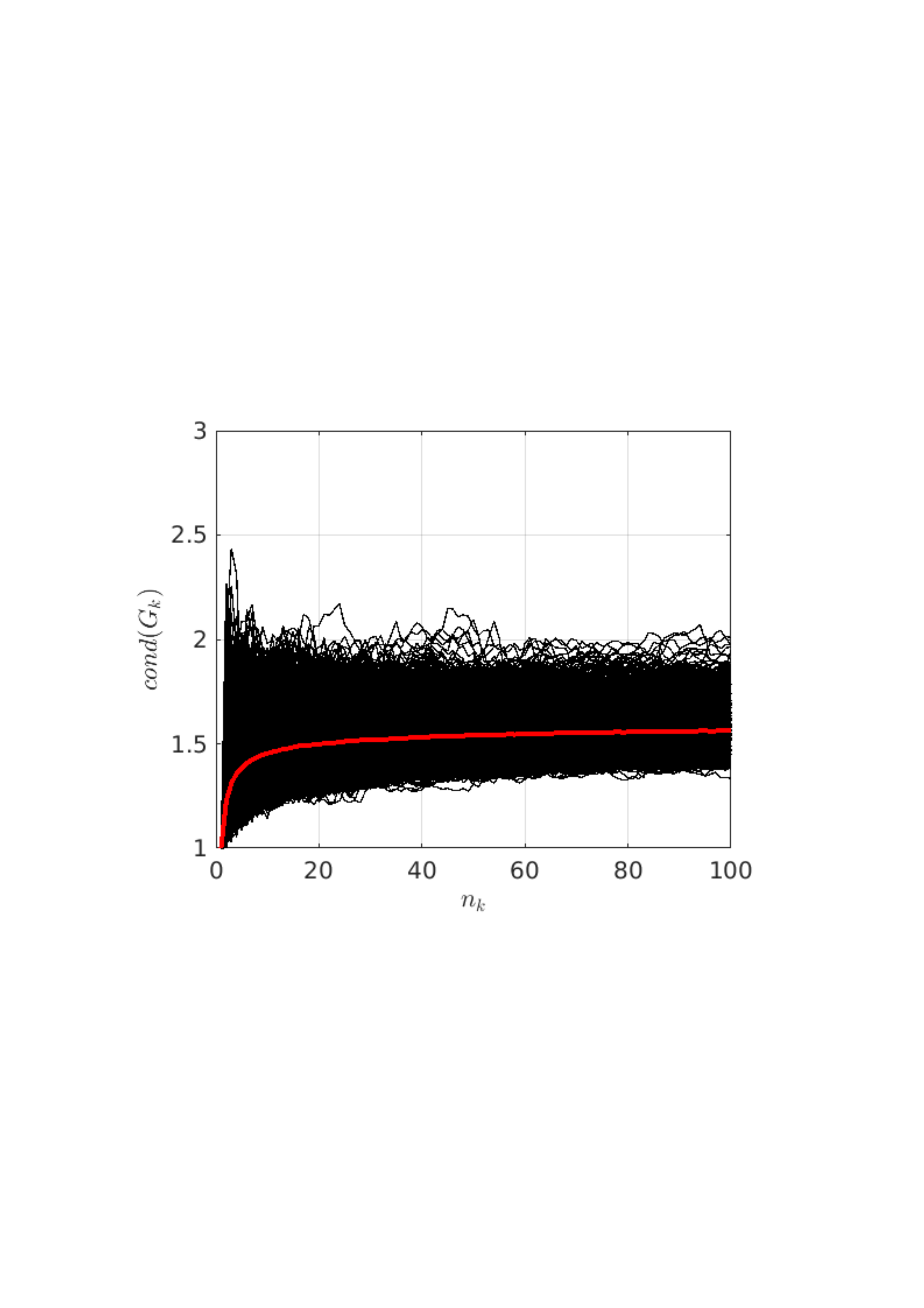}
\hspace{1.0cm}
\includegraphics[scale=0.4, bb=89 258 481 570]{./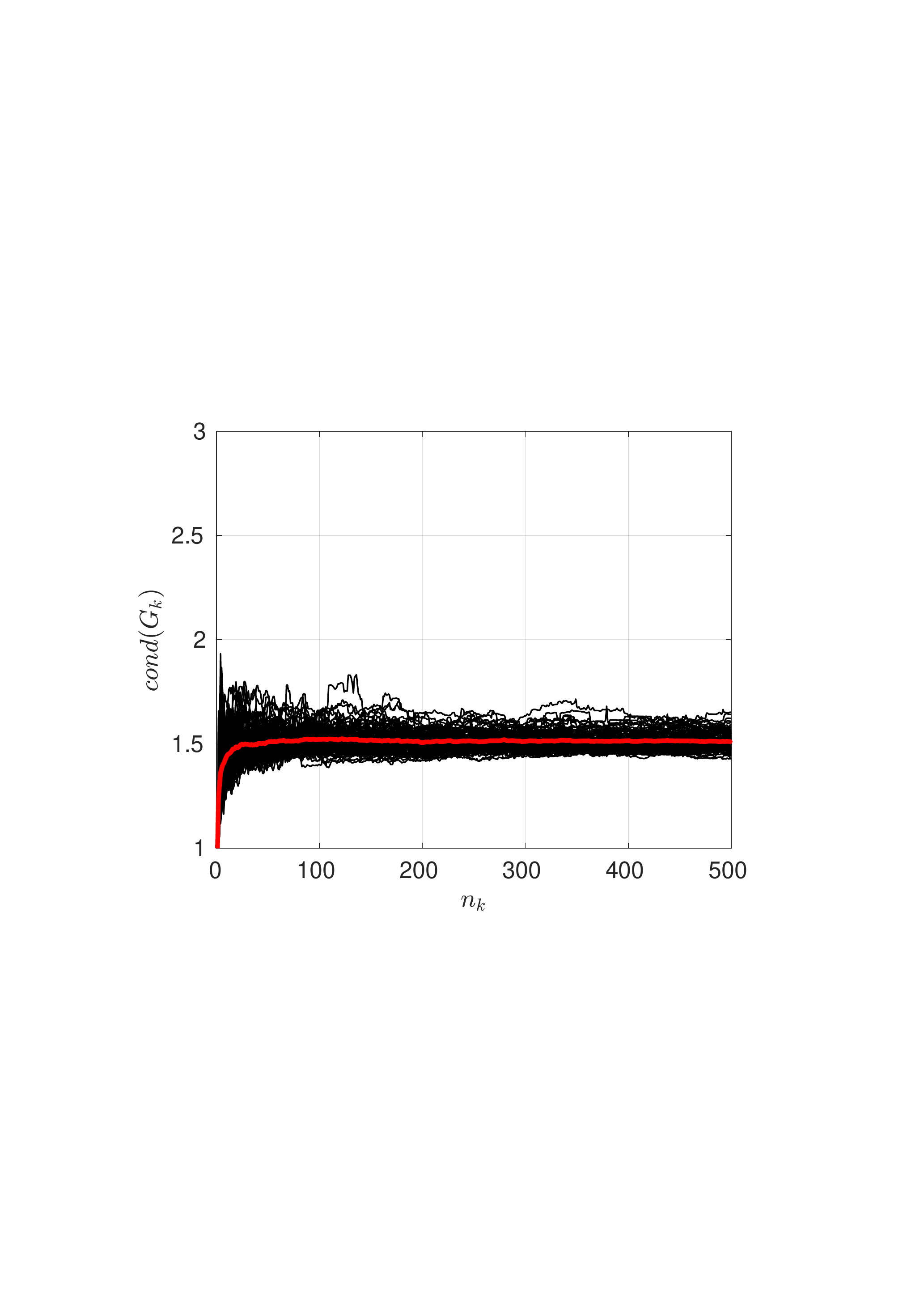}
\end{center}
\vspace{-0.7cm}
\caption{
Left: condition number $\textrm{cond}(G_\iteraz)$ at iteration $\iteraz$ with $\spazio_\iteraz=\iteraz$ and 
$\punti_\iteraz$ as in \eqref{eq:cond_tau_points}, $d=1$, Gaussian measure, Hermite polynomials,   $\paramzeta=2$, $\conf=0.1$. 
Black lines are $10^4$ realizations of the sequence $(\textrm{cond}(G_\iteraz))_{\iteraz\geq 1}$ with random samples from Algorithm~\ref{algo:sampling_mio_simplif}.
The red line is their sample mean. 
Right: condition number $\textrm{cond}(G_\iteraz)$ at iteration $\iteraz$ with $\spazio_\iteraz=\iteraz$
and $\punti_\iteraz$ as in \eqref{eq:cond_tau_points}, $d=4$, Gaussian measure, Hermite polynomials, $\paramzeta=2$, $\conf=0.1$. 
Black lines are $10^3$ realizations of the sequence $(\textrm{cond}(G_\iteraz))_{\iteraz\geq 1}$ with random samples from Algorithm~\ref{algo:sampling_mio_simplif}.
The red line is their sample mean. 
}
\label{fig:cond_hermite}
\end{figure}
\begin{figure}[htbp]
\begin{center}
\includegraphics[scale=0.4, bb=89 258 481 570]{./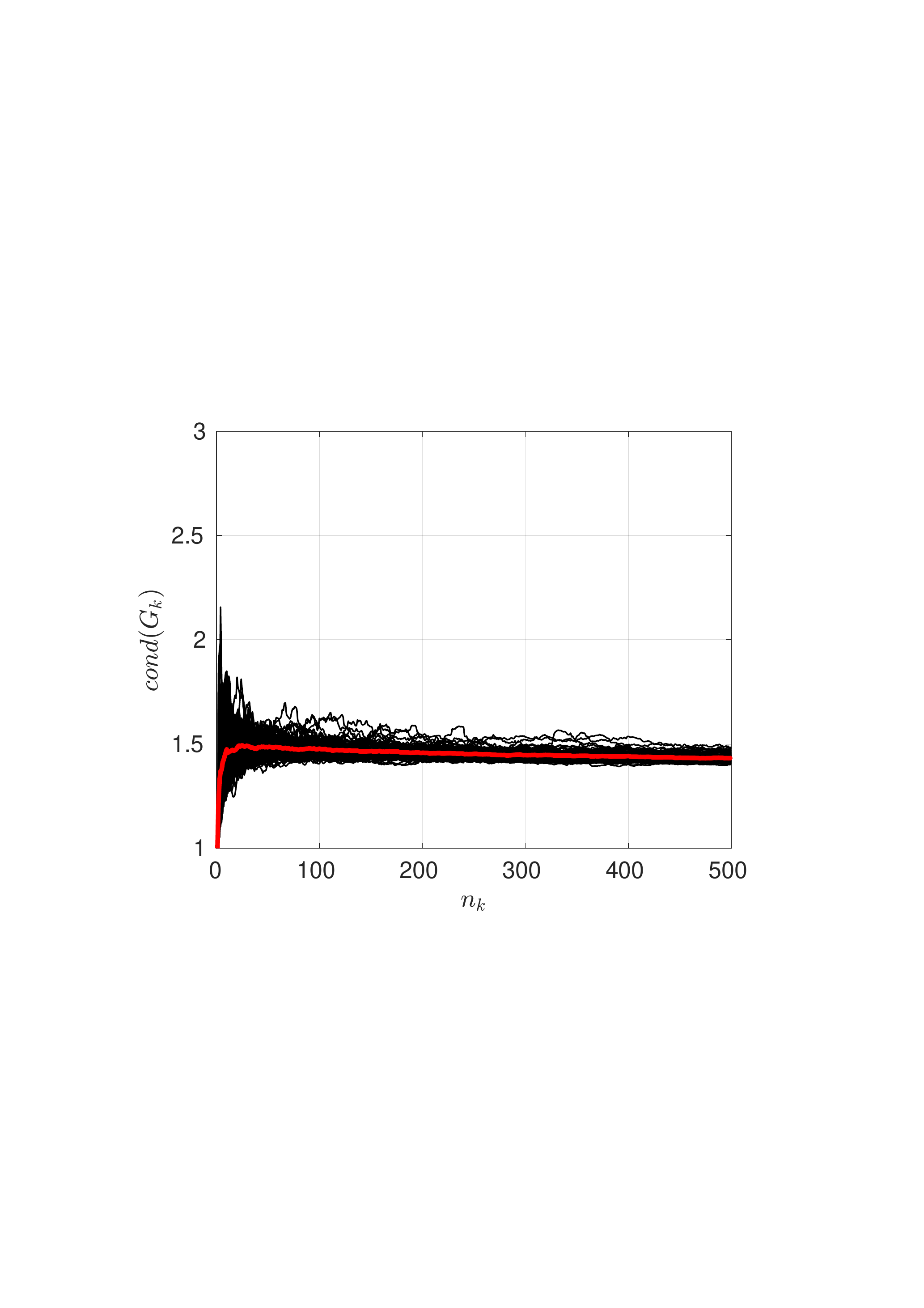}
\hspace{1.0cm}
\includegraphics[scale=0.4, bb=89 258 481 570]{./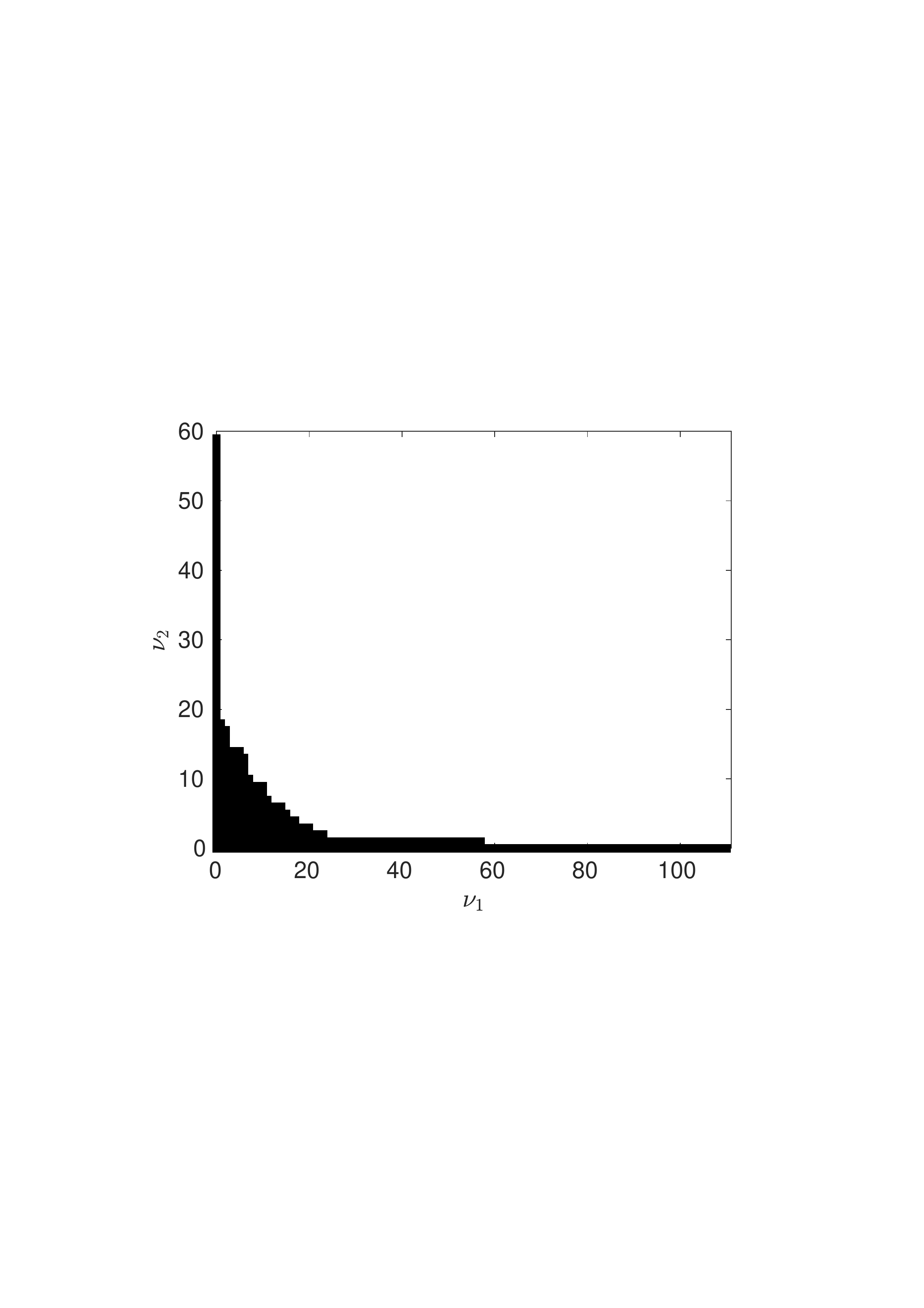}
\end{center}
\vspace{-0.7cm}
\caption{
Left: condition number $\textrm{cond}(G_\iteraz)$ at iteration $\iteraz$ with $\spazio_\iteraz=\iteraz$ and $\punti_\iteraz$ as in \eqref{eq:cond_tau_points}, $d=4$, uniform measure, Legendre polynomials, $\paramzeta=2$, $\conf=0.1$. 
Black lines are $10^3$ realizations of the sequence $(\textrm{cond}(G_\iteraz))_{\iteraz\geq 1}$ with random samples from Algorithm~\ref{algo:sampling_mio_simplif}.
The red line is their sample mean. 
Right: section of the first and second coordinates of an index set obtained at iteration $\iteraz=500$ during 
the simulation in Fig~\ref{fig:cond_hermite}-right. 
}
\label{fig:legendre_and_index_set}
\end{figure}

\subsection{Testing the adaptive algorithm}
\noindent 
For the numerical tests of Algorithm~\ref{algo:adaptive} we choose $\errmes$ as the uniform measure over $\setx=[-1,1]^d$ and $V_\iteraz$ as the spaces of tensorized Legendre polynomials 
obtained by first defining the sequence $(L_j)_{j\geq 0}$ of univariate Legendre polynomials orthonormalised as $\int_{-1}^{+1} L_i(t) L_j(t) \, \frac{dt}{2} = \delta_{ij}$ and then taking $T_j=L_j$ in \eqref{eq:multiv_basis}.
As an illustrative example, consider the following function that satisfies assumption \eqref{eq:assumption_seq_down_clos},  
\begin{equation}
u(\sample)= 
\left(
1 + 
\frac{1}{2d}
\sum_{i=1}^d q_i \sample_i 
\right)^{-1}, \qquad \sample \in \setx,
\label{eq:lafuncnum}
\end{equation}
with $d=16$ and $q_i = 10^{ -\frac{ 3(i-1) }{d-1} }$. 
A set $X_{CV}$ of $10^6$ cross-validation points uniformly distributed over $\setx$ is chosen once and for all, and the approximation error $\| u- u_C^\iteraz\|$ is estimated as   
\begin{equation}
\label{eq:error_cv}
\| u - u^\iteraz_C \| \approx
\| u - u^\iteraz_C \|_{CV,2}:= \sqrt{ \frac{1}{\#(X_{CV})  } \sum_{ \tilde \sample \in X_{CV} } | u(\tilde \sample) - u_C^\iteraz(\tilde \sample) |^2 
}
\leq  
\| u - u^\iteraz_C \|_{CV,\infty} :=
\max_{\tilde \sample \in X_{CV}} | u(\tilde \sample) - u_C^\iteraz(\tilde \sample) |. 
\end{equation}
The error estimators are denoted with $\| u - u^\iteraz_C \|_{CV,2}, \| u - u^\iteraz_C \|_{CV,\infty}$,  although these are not norms over the functional space.
The parameter of the marking strategy is set to $\paradorfler=0.5$, and $\Lambda_1=\{(0,\ldots,0)^\top \}.$ 
Figure~\ref{fig:adaptive_leg_err_cond}-left shows the results for the errors \eqref{eq:error_cv} obtained when approximating the function \eqref{eq:lafuncnum} with Algorithm~\ref{algo:adaptive} and using the random samples generated by Algorithm~\ref{algo:sampling_mio_simplif}. 
At each iteration $\iteraz$ the number of samples $\punti_\iteraz$ as a function of $\spazio_\iteraz$ satisfies \eqref{eq:condition_points_zeta_tau} with $\conf=0.1$ and $\paramzeta=2$. 
Figure~\ref{fig:adaptive_leg_err_cond}-right shows the condition number of $G_\iteraz$ at iteration $\iteraz$, that stays below two at all the iterations. 
Figure~\ref{fig:adaptive_leg_coeff_ind}-left shows that at each iteration $\iteraz$ the adaptive algorithm catches the coefficients in the best $\spazio_\iteraz$-term set. The coefficients in Figure~\ref{fig:adaptive_leg_coeff_ind}-left have not been sorted, and they appear in the same order in which their corresponding elements of the basis were included in the approximation space by the adaptive selection procedure. 
After $35$ iterations the algorithm has adaptively constructed a sequence $\Lambda_1,\ldots,\Lambda_{35}$ of index sets. 
The set $\Lambda_{35}$ contains about $10^3$ indices, and its associated space $V_{\Lambda_{35}}$ provides an approximation error of the order $10^{-7}$ on average.  
Figure~\ref{fig:adaptive_leg_coeff_ind} shows some sections of $\Lambda_{35}$. 
All the $d$ coordinates in $\Lambda_{35}$ are active, \emph{i.e.} $\forall i\in \{1,\ldots,d\}, \exists \nu \in \Lambda : \nu_j > 0$. 

The condition number in Figure~\ref{fig:adaptive_leg_err_cond}-right actually decreases w.r.t.~$\iteraz$, showing that condition \eqref{eq:condition_points_zeta_tau} could be relaxed while still preserving the stability of the discrete projection, and yielding faster convergence rates w.r.t.~$\punti_\iteraz$ than those in Figure~\ref{fig:adaptive_leg_err_cond}-left. 

\begin{figure}[htbp]
\begin{center}
\includegraphics[scale=0.4, bb=46 227 528 571]{./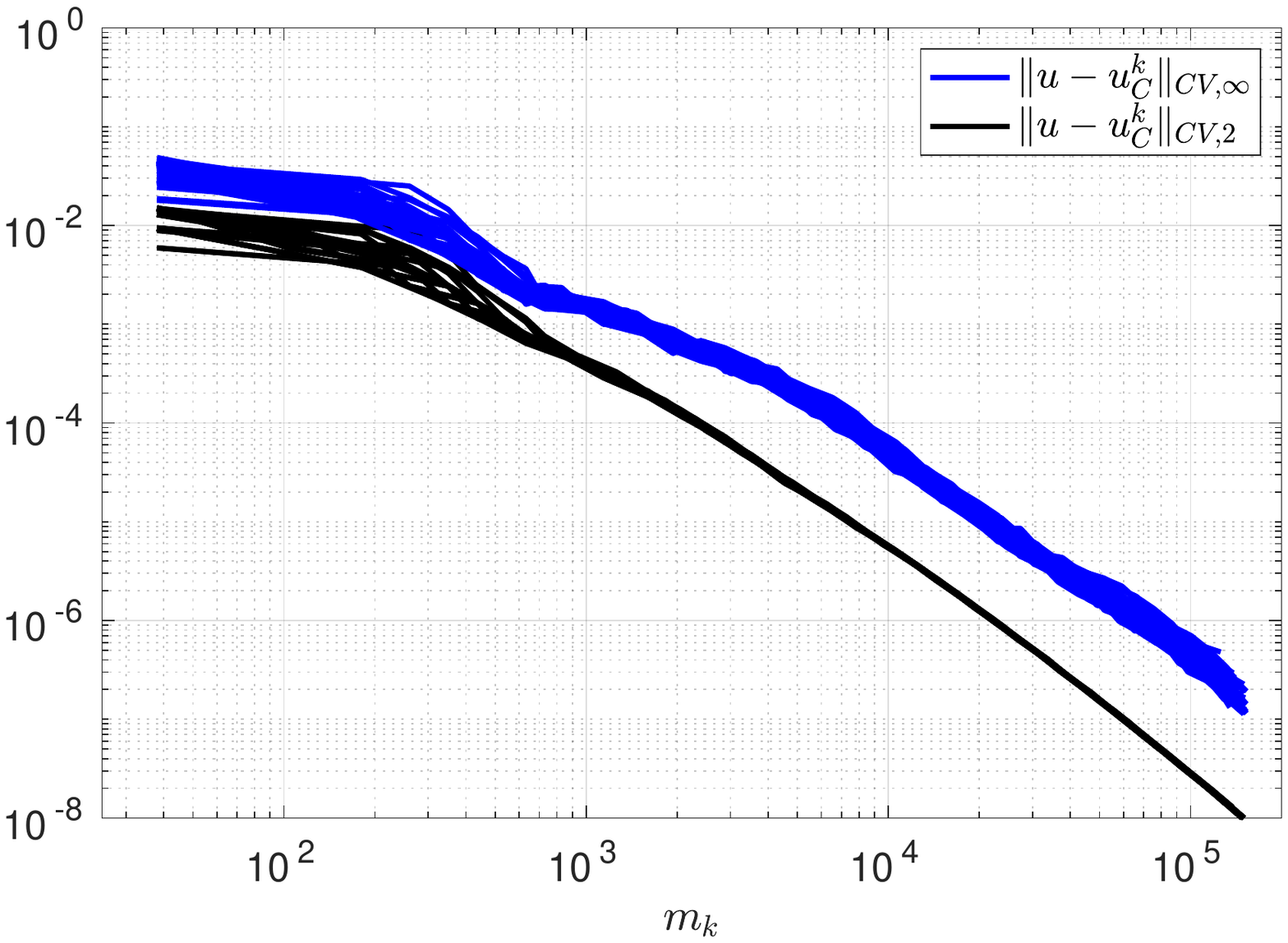}
\hspace{0.5cm}
\includegraphics[scale=0.4, bb=89 228 481 570]{./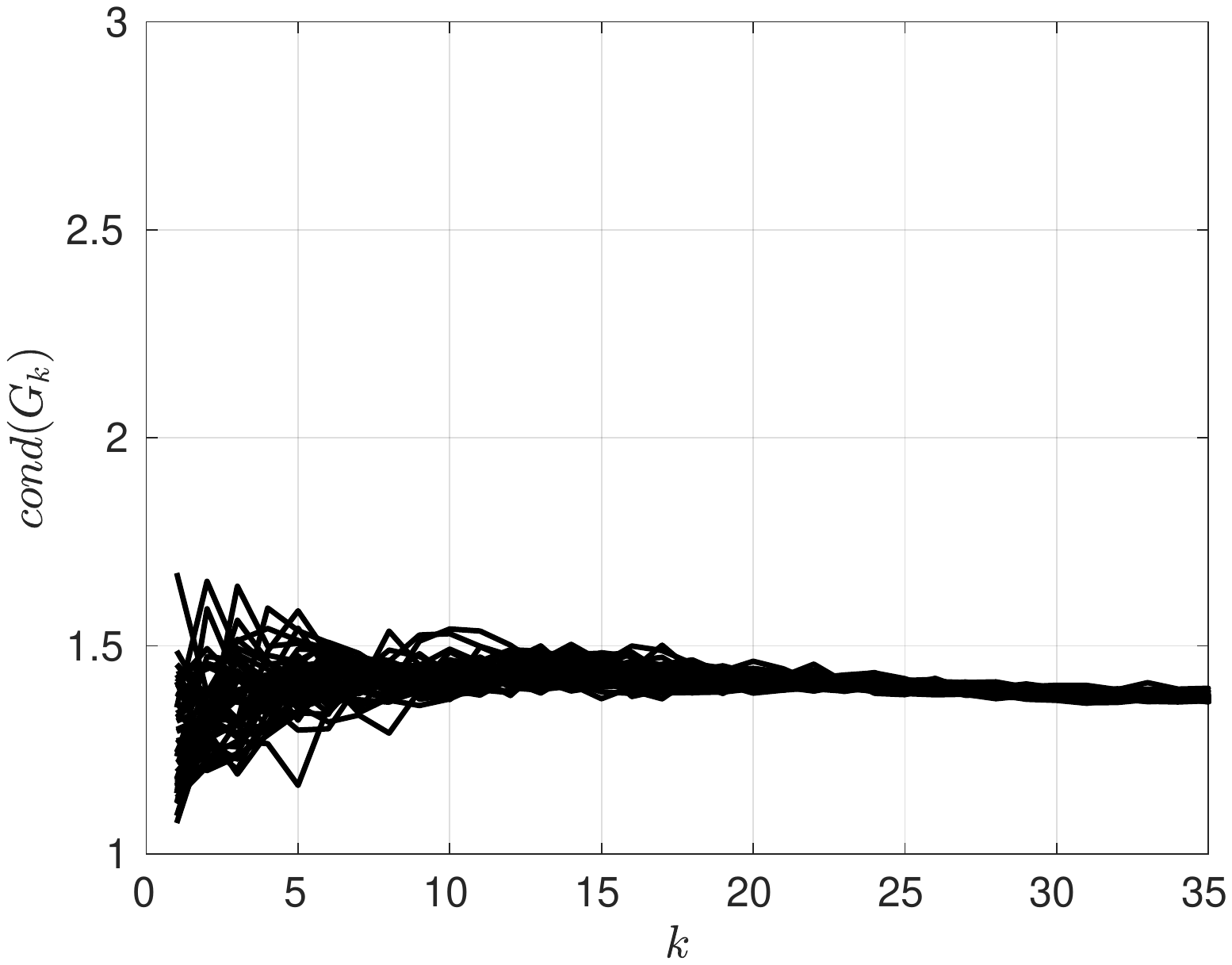}
\end{center}
\vspace{-0.5cm}
\caption{
Left: 
$10^2$ realizations of the errors 
$\| u - u^\iteraz_C \|_{CV,2}$ and 
$\| u - u^\iteraz_C \|_{CV,\infty}$ 
versus $\punti_\iteraz$ 
obtained with Algorithm~\ref{algo:adaptive} and the random samples generated by Algorithm~\ref{algo:sampling_mio_simplif}. 
Right: $10^2$ realizations of $\textrm{cond}(G_\iteraz)$ versus $\iteraz$, for the same simulation on the left. 
}
\label{fig:adaptive_leg_err_cond}
\end{figure}

\begin{figure}[htbp]
\begin{center}
\includegraphics[scale=0.4, bb=89 258 481 570]{./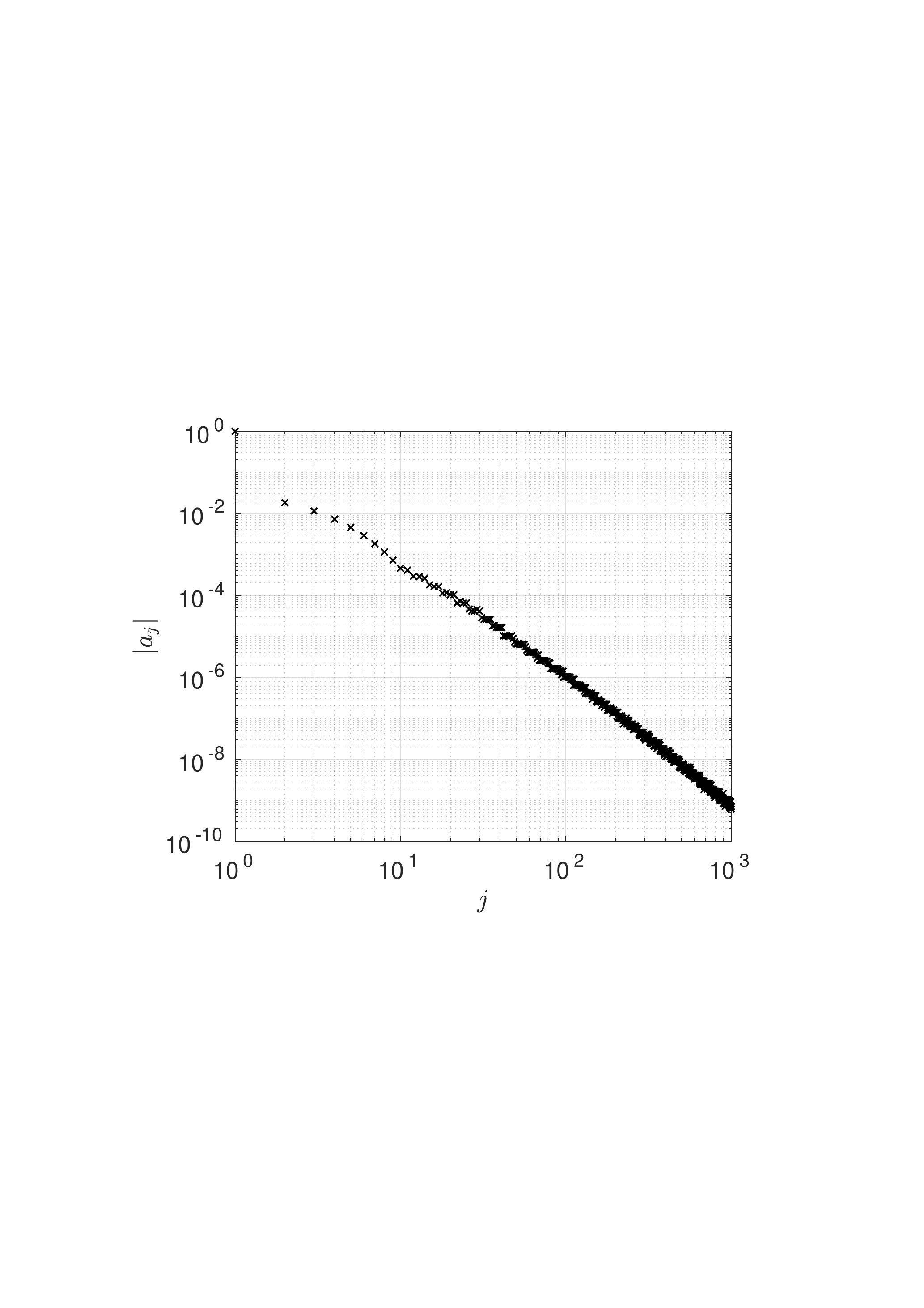}
\hspace{0.5cm}
\includegraphics[scale=0.4, bb=89 258 481 570]{./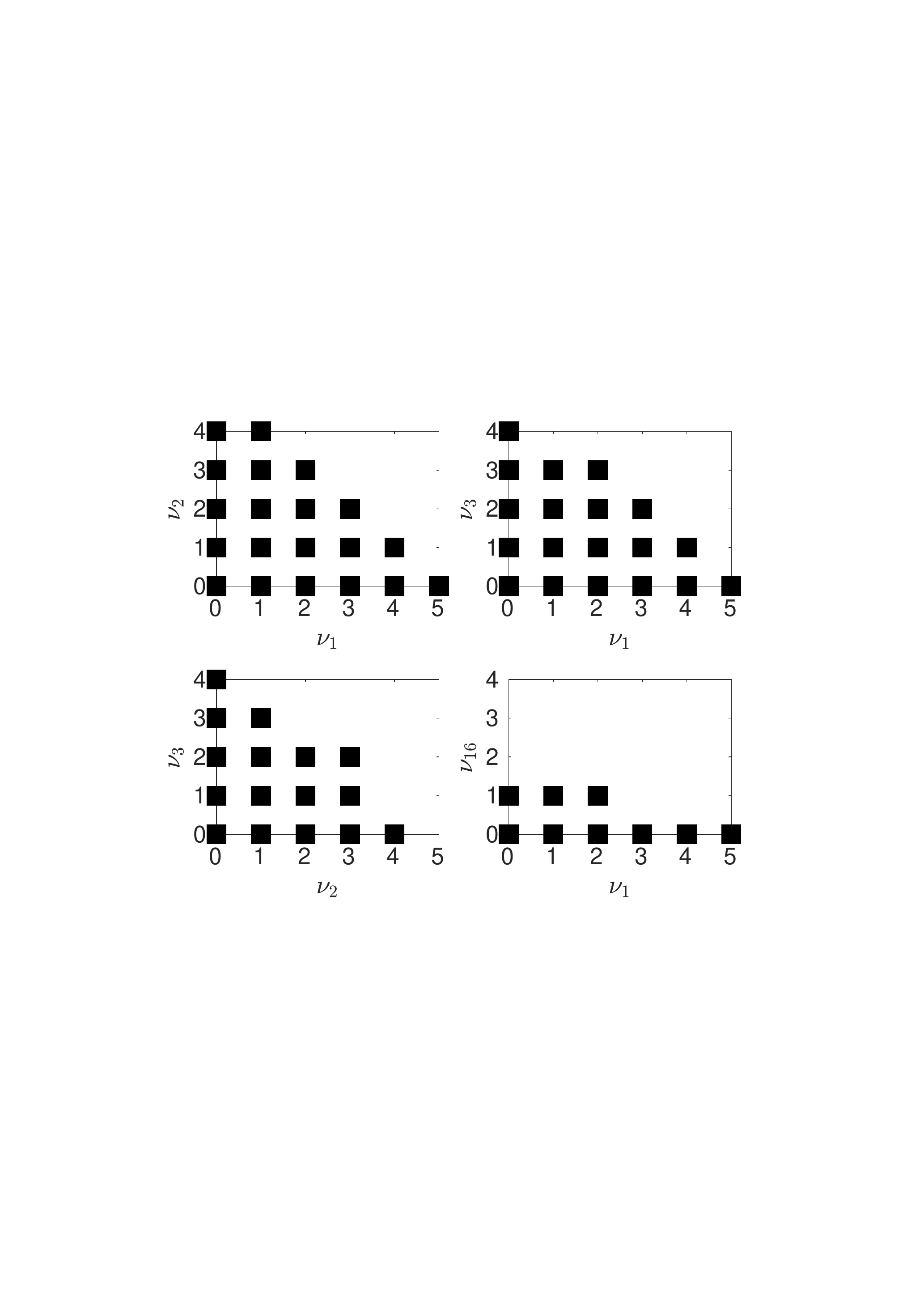}
\end{center}
\vspace{-0.5cm}
\caption{Left: first $10^3$ coefficients of the estimator 
$u_C^\iteraz=\sum_{j} a_j \basis_j$ 
obtained at iteration $\iteraz=35$ with the index set $\Lambda_{35}$, 
for one realization among those shown in Figure~\ref{fig:adaptive_leg_err_cond}. Right: some sections of the index set corresponding to the coefficients displayed on the left.}
\label{fig:adaptive_leg_coeff_ind}
\end{figure}

\section{Conclusions}
\noindent 
We have advanced one step further the analysis of optimal weighted least-squares estimators for a given general $d$-dimensional approximation space.   
The main novelty concerns the structure of the random samples, that follow a distribution with product form.  
The results have immediate applications to the adaptive setting with a nested sequence of approximation spaces, and point out new promising directions for the development of adaptive numerical methods for high-dimensional approximation using polynomial or wavelet spaces. 
Our analysis indicates that efficient adaptive methods can also be developed for general sequences of nonnecessarily nested spaces. This topic will be investigated in the future.

\bibliographystyle{plain}

\appendix

\section{Algorithms}

\begin{figure}[htbp]
\centering
\begin{algorithm}[H]\small
\caption{\small Deterministic sequential sampling}
\begin{algorithmic}
\REQUIRE $\iterazmax$, $d\errmes$, $(\intero_\iteraz)_{ \iteraz=1}^{\iterazmax}$, $(\spazio_\iteraz)_{ \iteraz=1}^{\iterazmax}$,  $(\basis_j)_{j=1}^{\spazio_\iterazmax}$  
\ENSURE $\sample^1,\ldots,\sample^{\punti_{\iterazmax} }$ 
s.t. 
$\sample^{(j-1)\intero_\iteraz +1}, \ldots, \sample^{j \intero_\iteraz}
\stackrel[]{\textrm{i.i.d.}}{\sim}
\newmes_j, 
j=1,\ldots,\spazio_\iteraz, k=1,\ldots,\iterazmax.
$ 
\FOR{$\indsample=1$ to $\spazio_{1}$}
\FOR{$\ell=1$ to $\intero_{1}$}
\STATE Sample $\sample^{\indsample \ell}$ from $\newmes_\indsample$
\ENDFOR
\ENDFOR
\STATE $(\sample^1,\ldots,\sample^{\punti_1})^\top \gets \textrm{Vec}\left( \left( \left(\sample^{\indsample \ell} \right)_{\indsample=1,\ldots,\spazio_1 \atop \ell=1,\ldots,\intero_1} \right)^\top  \right)  $
\FOR{$\iteraz=2$ to $\iterazmax$}
\FOR{$\indsample=\spazio_{\iteraz-1} +1$ to $\spazio_{\iteraz}  $}
\FOR{$\ell=1$ to $\intero_{\iteraz-1} $}
\STATE Sample $\sample^{\indsample \ell}$ from $\newmes_\indsample$
\ENDFOR
\ENDFOR
\FOR{$\indsample=1$ to $\spazio_{\iteraz}  $}
\FOR{$\ell= \intero_{\iteraz -1}+1$ to $\intero_{\iteraz} $}
\STATE Sample $\sample^{\indsample \ell}$ from $\newmes_\indsample$
\ENDFOR
\ENDFOR
\STATE $(\sample^1,\ldots,\sample^{\punti_\iteraz})^\top \gets \textrm{Vec}\left( \left( \left(\sample^{\indsample \ell} \right)_{\indsample=1,\ldots,\spazio_\iteraz \atop \ell=1,\ldots,\intero_\iteraz} \right)^\top  \right)  $
\ENDFOR
\end{algorithmic}
\label{algo:sampling_mio_simplif}
\end{algorithm}
\end{figure}

\begin{figure}[htbp]
\centering
\begin{algorithm}[H]\small
\caption{\small Random sequential sampling}
\begin{algorithmic}
\REQUIRE $\iterazmax$, $(\auxmes_{\spazio_\iteraz})_{\iteraz=1}^{\iterazmax-1}$, $(\sigma_{\spazio_\iteraz})_{\iteraz=1}^\iterazmax$ 
\ENSURE 
$\sample^1,\ldots,\sample^{\punti_{\iterazmax} }$ s.t. $\sample^1,\ldots,\sample^{\punti_\iteraz} \stackrel[]{\textrm{i.i.d.}}{\sim} \auxmes_{\spazio_\iteraz}, \iteraz=1,\ldots,\iterazmax$.
\FOR{$\indsample=1$ to $\punti_{1}$}
\STATE Sample $\sample^\indsample$ from $\auxmes_{\spazio_1}=\sigma_{\spazio_1}$
\ENDFOR
\FOR{$\iteraz=2$ to $\iterazmax$}
\STATE Sample $\binrv_\iteraz$ from $\Bin\left(\punti_{\iteraz}, \dfrac{\spazio_{\iteraz}-\spazio_{\iteraz-1}}{\spazio_\iteraz} \right)$
\FOR{$\indsample=\min\left( \punti_{\iteraz} - \binrv_\iteraz , \punti_{\iteraz-1}  \right)+1$ to $\min\left( \punti_{\iteraz} - \binrv_\iteraz , \punti_{\iteraz-1}  \right)+\max\left( \punti_\iteraz - \binrv_\iteraz - \punti_{\iteraz-1},0 \right)$}
\STATE Sample $\sample^\indsample$ from $\auxmes_{\spazio_{\iteraz-1}}$
\ENDFOR
\FOR{$\indsample=\min\left( \punti_{\iteraz} - \binrv_\iteraz , \punti_{\iteraz-1}  \right)+\max\left( \punti_\iteraz - \binrv_\iteraz - \punti_{\iteraz-1},0 \right)+1$ to $\punti_\iteraz$}
\STATE Sample $\sample^\indsample$ from $\sigma_{\spazio_\iteraz}$
\ENDFOR
\ENDFOR
\end{algorithmic}
\label{algo:sampling_altro}
\end{algorithm}
\end{figure}

\begin{figure}[htbp]
\centering
\begin{algorithm}[H]\small
\caption{\small Adaptive weighted least squares}
\begin{algorithmic}
\REQUIRE $\Lambda_1=\{ (0,\ldots,0)^\top \}$, 
$\paradorfler$, 
$\paramzeta$, $\conf$, 
$\iterazmax$, 
$\iteraz_{\textrm{sg}}$ 
\ENSURE $u_C^{\iterazmax}$
\STATE $\intero_1= \lceil \costprop^{-1} \ln( \zeta(\paramzeta) (\#(\Lambda_1))^{\paramzeta+1} / \conf )  \rceil$
\FOR{\textbf{each}
$\nu \in \Lambda_1$
}
\STATE Add $\intero_1$ random samples distributed as $\newmes_\nu$
\ENDFOR
\STATE $\punti_1 =  \intero_1 \#(\Lambda_1)$  
\STATE $u_C^1 =  \argmin_{ v \in V_{\Lambda_1} } \| u - v \|_{\punti_1}  $
\STATE $r_1 =  u - u_C^1  $
\FOR{$\iteraz=2$ to $\iterazmax$}
\STATE $F = \textrm{BULK}(\mathcal{R}(\Lambda_{\iteraz-1}), | \langle r_{\iteraz-1},\basis_\nu \rangle_{\punti_{\iteraz-1}} |^2,\paradorfler  )$ 
\STATE $\Lambda_\iteraz = \Lambda_{\iteraz-1} \cup F$
\STATE $\intero_k= \lceil \costprop^{-1} \ln( \zeta(\paramzeta) (\#(\Lambda_k))^{\paramzeta+1} / \conf )  \rceil$
\FOR{\textbf{each}
$\nu \in \Lambda_{k-1}$
}
\STATE Add $\intero_k - \intero_{k-1}$ random samples distributed as $\newmes_\nu$
\ENDFOR
\FOR{\textbf{each}
$\nu \in \Lambda_k \setminus \Lambda_{k-1}$
}
\STATE Add $\intero_{k}$ random samples distributed as $\newmes_\nu$
\ENDFOR
\STATE $\punti_k = \intero_k \#(\Lambda_k)$  
\STATE $u_C^\iteraz =  \argmin_{ v \in V_{\Lambda_\iteraz} } \| u - v \|_{\punti_\iteraz}  $
\IF{$ \iteraz \textrm{ mod } \iteraz_{sg} =0 $}
    \STATE $\Lambda_\iteraz = \Lambda_{\iteraz-1} \cup \{ \nu \} $,  with $\nu$ being the most ancient multi-index in $\mathcal{R}(\Lambda_{\iteraz-1})\setminus F $
  \ENDIF
\STATE $r_\iteraz =  u - u_C^\iteraz  $
\ENDFOR
\end{algorithmic}
\label{algo:adaptive}
\end{algorithm}
\end{figure}

\begin{figure}[htbp]
\centering
\begin{algorithm}[H]\small
\caption{\small Fully adaptive weighted least squares}
\begin{algorithmic}
\REQUIRE $\Lambda_1=\{ (0,\ldots,0)^\top \}$, $\paradorfler$, $\iterazmax$, $\thresholdcond$, $\iteraz_{\textrm{sg}}$
\ENSURE $u_C^{\iterazmax}$
\REPEAT
\FOR{\textbf{each}
$\nu \in \Lambda_{1}$
}
\STATE Add one random sample distributed as $\newmes_\nu$
\ENDFOR
\STATE $\punti_1=\punti_1+\#(\Lambda_1)$
\UNTIL{ $\vvvert G_1 - I_1  \vvvert < \thresholdcond$  }
\STATE $u_C^1 =  \argmin_{ v \in V_{\Lambda_1} } \| u - v \|_{\punti_1}  $
\STATE $r_1 =  u - u_C^1  $
\FOR{$\iteraz=2$ to $\iterazmax$}
\STATE $F = \textrm{BULK}(\mathcal{R}(\Lambda_{\iteraz-1}), | \langle r_{\iteraz-1},\basis_\nu \rangle_{\punti_{\iteraz-1}} |^2,\paradorfler  )$ 
\STATE $\Lambda_\iteraz = \Lambda_{\iteraz-1} \cup F$
\FOR{\textbf{each}
$\nu \in \Lambda_k \setminus \Lambda_{k-1}$
}
\STATE Add $\punti_{\iteraz-1}/\#(\Lambda_{\iteraz-1})$ random samples distributed as $\newmes_\nu$
\ENDFOR
\STATE $\punti_\iteraz = \punti_{\iteraz-1} \#(\Lambda_\iteraz) /\#(\Lambda_{\iteraz-1})   $ 
\REPEAT
\FOR{\textbf{each}
$\nu \in \Lambda_{k}$
}
\STATE Add one random sample distributed as $\newmes_\nu$
\ENDFOR
\STATE $\punti_\iteraz=\punti_\iteraz+\#(\Lambda_\iteraz)$
\UNTIL{  $\vvvert G_\iteraz - I_\iteraz  \vvvert < \thresholdcond$  }
\STATE $u_C^\iteraz =  \argmin_{ v \in V_{\Lambda_\iteraz} } \| u - v \|_{\punti_\iteraz}  $
\IF{$ \iteraz \textrm{ mod } \iteraz_{sg} =0 $}
    \STATE $\Lambda_\iteraz = \Lambda_{\iteraz-1} \cup \{ \nu \} $,  with $\nu$ being the most ancient multi-index in $\mathcal{R}(\Lambda_{\iteraz-1})\setminus F $
  \ENDIF
\STATE $r_\iteraz =  u - u_C^\iteraz  $
\ENDFOR
\end{algorithmic}
\label{algo:fullyadaptive}
\end{algorithm}
\end{figure}

\end{document}